\def\bm{\boldsymbol}
\newcommand{\comment}[1]{}
\newcommand{\BEA}{\begin{eqnarray}}
\newcommand{\EEA}{\end{eqnarray}}
\newcommand{\width}{m}
\newcommand{\E}{\mathbb{E}}
\DeclareMathOperator*{\argmin}{arg\,min}
\newcommand{\bmtheta}{{\bmalpha}}
\newcommand{\bmalpha}{{\bm{\alpha}}}
\newtheorem{lem}{Lemma}[section]
\newtheorem{theo}{Theorem}[section]
\newtheorem{defi}{Definition}[section]
\newtheorem{assu}{Assumption}[section]
\newcommand{\rev}[1]{{\color{black}#1}}
\newcommand{\revv}[1]{{\color{black}#1}}
\title{Solving PDEs on Unknown Manifolds with Machine Learning}
\author{
  Senwei Liang \\
  Department of Mathematics, Purdue University, IN 47907, USA \\
  \texttt{liang339@purdue.edu} \\
  \And
  Shixiao W. Jiang \\ 
  Institute of Mathematical Sciences, ShanghaiTech University, Shanghai, 201210, China  \\
    \texttt{jiangshx@shanghaitech.edu.cn} \\
  \And 
  John Harlim \\
  Department of Mathematics, Department of Meteorology and Atmospheric Science, \\ Institute for Computational and Data Sciences \\
  The Pennsylvania State University, University Park, PA 16802, USA\\
  \texttt{jharlim@psu.edu} \\
  \And
  Haizhao Yang \\
  Department of Mathematics, University of Maryland, College Park, MD 20742, USA\\
  \texttt{hzyang@umd.edu} \\
}
\begin{document}

\maketitle

\begin{abstract}
This paper proposes a mesh-free computational framework and machine learning theory for solving elliptic PDEs on unknown manifolds, identified with point clouds, based on diffusion maps (DM) and deep learning. The PDE solver is formulated as a supervised learning task to solve a least-squares regression problem that imposes an algebraic equation approximating a PDE (and boundary conditions if applicable). This algebraic equation involves a graph-Laplacian type matrix obtained via DM asymptotic expansion, which is a consistent estimator of second-order elliptic differential operators. The resulting numerical method is to solve a highly non-convex empirical risk minimization problem subjected to a solution from a hypothesis space of neural networks~\rev{(NNs)}. In a well-posed elliptic PDE setting, when the hypothesis space consists of neural networks with either infinite width or depth, we show that the global minimizer of the empirical loss function is a consistent solution in the limit of large training data. When the hypothesis space is a two-layer neural network, we show that for a sufficiently large width, gradient descent can identify a global minimizer of the empirical loss function. {Supporting numerical examples demonstrate the convergence of the solutions, ranging from simple manifolds with low and high co-dimensions, to rough surfaces with and without boundaries. We also show that the proposed NN solver can robustly generalize the PDE solution on new data points with generalization errors that are almost identical to the training errors, superseding a Nystr\"om-based interpolation method.}   
\end{abstract}

\keywords{High-Dimensional PDEs \and Diffusion Maps \and Deep Neural Networks \and Convergence Analysis \and Least-Squares Minimization \and Manifolds \and Point Clouds.}

\section{Introduction}

Solving high-dimensional PDEs on unknown manifolds is a challenging computational problem that commands a wide variety of applications. {In physics and biology, such a problem arises in modeling of granular flow \cite{rauter2018finite}, liquid crystal \cite{virga2018variational}, biomembranes \cite{elliott2010modeling}. In computer graphics \cite{bertalmio2001variational}, PDEs on surfaces have been used to restore damaged patterns on a surface \cite{macdonald2010implicit}, brain imaging \cite{memoli2004implicit}, among other applications.} By unknown manifolds, we refer to the situation where the parameterization of the domain is unknown. The main computational challenge arising from this constraint is on the approximation of the differential operator using the available sample data (point clouds) that are assumed to lie on (or close to) a smooth manifold. Among many available methods proposed for PDEs on surfaces embedded in $\mathbb{R}^3$, they typically parameterize the surface and subsequently use it to approximate the tangential derivatives along surfaces. For example, the finite element method represents surfaces \cite{dziuk2013finite,camacho,bonito2016high} using triangular meshes. Thus its accuracy relies on the quality of the generated meshes which may be poor if the given point cloud data are randomly distributed. Another class of approach is to estimate the embedding function of the surface using e.g., level set representation \cite{bertalmio2001variational} or closest point representation \cite{ruuth2008simple,chu2018volumetric,martin2020equivalent}, and subsequently solve the embedded PDE on the ambient space. The key issue with this class of approaches is that since the embedded PDE is at least one dimension higher than the dimension of the two-dimensional surface (i.e., co-dimension higher than one), the computational cost may not be feasible if the manifold is embedded in high-dimensional ambient space. Another class of approaches is the mesh-free radial basis function (RBF) method \cite{piret2012orthogonal,fuselier2013high} for solving PDEs on surfaces. This approach, however, may not be robust in high dimensional and rough surface problems  as pointed out in \cite{fuselier2013high,Shankar2014RBFFD}, and the convergence near the boundary can be problematic.

Motivated by \cite{li2016convergent}, an unsupervised learning method called the \emph{Diffusion Map} (DM) algorithm \cite{cl:06} was proposed to directly solve the second-order elliptic PDE on point clouds data that lie on the manifolds \cite{gh:18}. The proposed DM-based solver has been extended to elliptic problems with various types of boundary conditions that typically arise in applications \cite{jiang2023ghost}, such as non-homogeneous Dirichlet, Neumann, and Robin types and to time-dependent advection-diffusion PDEs \cite{yan2021kernel}. The main advantage of this approach is to avoid the tedious parameterization of sub-manifolds in a high-dimensional space. However, the estimated solution is represented by a discrete vector whose components approximate the function values on the available point clouds, analogous to standard finite-difference methods. With such a representation, one will need an interpolation method to find the solutions on new data points, which is a nontrivial task when the domain is an unknown manifold. This issue is particularly relevant if the available data points come sequentially. Another problem with the DM-based solver is that the size of the matrix approximating the differential operator increases as a function of the data size, which creates a computational bottleneck when the PDE problem {involves solving a singular linear system with a pseudo-inversion or an eigenvalue decomposition. }

One way to overcome these computational issues is to solve the PDE in a supervised learning framework using neural networks (NNs). On an Euclidean domain, where extensive research has been conducted \cite{E2017,Han2018,Khoo2017SolvingPP,Sirignano2018,Berg2018,Zang2019,Li2019,Beck2019,RAISSI2019686,Fang2020}, NN-based PDE solvers reformulate a PDE problem as a regression problem. Subsequently, the PDE solution is approximated by a class of NN functions. NNs have good approximation properties \cite{Barron1993,Weinan2019,E2019_2,Montanelli2019,SIEGEL2020313,daubechies2019nonlinear,Montanelli2019_2,Hutzenthaler2019,hutzenthaler2020,Shen4,Shen6,devore2021neural} that enable application of these PDE solvers to high-dimensional problems. With the advanced computational tools (e.g., TensorFlow and Pytorch) and the built-in optimization algorithms therein, developing mathematical software with parallel computing using NNs is much simpler than conventional numerical techniques. \rev{In fact, NN has been proposed to solve linear problems \cite{Cichocki,luz2020learning,YiqiMichael}.}

Building upon this encouraging result, we propose to solve PDEs on unknown manifolds by embedding the DM algorithm in NN-based PDE solvers. In particular, the DM algorithm is employed to approximate the second-order elliptic differential operator defined on the manifolds. Subsequently, a least-squares regression problem is formulated by imposing an algebraic equation that involves a graph-Laplacian type matrix, obtained by the discretization of the DM asymptotic expansion on the available point cloud training data. Numerically, we solve the resulting empirical loss function by finding the solution from a hypothesis space of neural-network type functions (e.g., with feedforward NNs, we consider the compositions of power of ReLU
or polynomial-sine
activation functions). \revv{The proposed NN approach provides a more feasible approach to solve large linear systems associated with manifold PDEs compared to performing (a stable) pseudo-inversion and obtain a continuous function solution instead of a discrete solution at training points. Such feasibility could be achieved through mini-batch training as we shall demonstrate in Section~\ref{sec:2dtorus}, which circumvents the computational and memory issues of using the full linear system at once. Similar mini-batch training strategies have also been used in~\cite{YiqiMichael}. } Theoretically, we study the approximation and optimization aspects of the error analysis, induced by the training procedure that minimizes the empirical risk defined on available point cloud data. Under appropriate regularity assumptions, we show that when the hypothesis space has either an infinite width or depth, the global minimizer of the empirical loss function is a consistent solution in the limit of large training data. The corresponding error bound gives the relations between the desired accuracy and the required \rev{size} of training data and width (or depth) of the network. Furthermore, when the hypothesis space is a two-layer NN, we show that for a sufficiently large width, the gradient descent (GD) method can identify a global minimizer of the empirical loss function. {Numerically, we verify the proposed methods on several test problems on simple 2D and 3D manifolds of various co-dimensions and rough unknown surfaces with and without boundaries. In a set of instructive examples, we compare the accuracy of the generalization of the NN solution on new data points to the interpolation solution attained by applying the classical Nystr\"om scheme on a linear combination of the estimated Laplace-Beltrami eigenfunctions that spans the PDE solution. In rough surface examples, we will verify the accuracy of the solutions by comparing them against the Finite Element Method solutions.}



The paper will be organized as follows. In Section~\ref{DMsolver}, we give a brief review on the DM (and variable bandwidth DM) based PDE solver on closed manifolds and an overview of the ghost point diffusion maps (GPDM) for manifolds with boundaries. The proposed PDE solver is introduced in Section~\ref{NNsolver}. The theoretical foundation of the proposed method is presented in Section~\ref{theory}. The numerical performance of the proposed method is illustrated in Section \ref{sec:num}. We conclude the paper with a summary in Section \ref{sec:con}. For reader's convenience, we present an algorithmic perspective for GPDM in Appendix~\ref{appendix_a}. We also include the longer proof for the optimization aspect of the algorithm in Appendix~\ref{appendix_c} and report all parameters used to generate the numerical results in Appendix~\ref{appendix_d}.

\section{DM-based PDE Solver on Unknown Manifolds}
\label{DMsolver}

To illustrate the main idea, let us discuss an elliptic problem defined on a $d$-dimensional closed sub-manifold $M\subseteq [0,1]^n\subseteq  \mathbb{R}^n$. {We will provide a brief discussion for the problem defined on compact manifolds with boundaries to end this section.} Let $u:M\to\mathbb{R}$ be a solution of the elliptic PDE,
\BEA
(-a(\mathbf{x}) + \mathcal{L}) u(\mathbf{x}) := -a(\mathbf{x})u(\mathbf{x}) + \mbox{div}_{g}\big(\kappa (\mathbf{x})\nabla _{g}u(\mathbf{x})\big) = f(\mathbf{x}), \quad \mathbf{x}\in M \label{ellipticPDE}
\EEA
Here, we have used the notations $\mbox{div}_{g}$ and $\nabla _{g}$ for the
divergence and gradient operators, respectively, defined with respect to the
Riemannian metric $g$ inherited by $M$ from the ambient space $\mathbb{R}^n$. 
The real-valued functions $a$ and $\kappa$ are strictly positive such that $(-a+\mathcal{L})$ is strictly negative definite. The problem is assumed to be well-posed for $f\in C^{1,\alpha}(M)$, for $0<\alpha <1/2$. For $a\in C^{1,\alpha}(M)$ and $\kappa\in C^{3,\alpha}(M)$, a unique classical solution $u\in C^{3,\alpha}(M)$ is guaranteed. Here, we raise the regularity by one-order of derivative (compared to reported results in the literature \cite{han2011elliptic,gilbarg2015elliptic}) for the following reason.

The key idea of the DM-based PDE solver rests on the
following asymptotic expansion \cite{cl:06},
\begin{align}\begin{split}
G_{\epsilon }u(\mathbf{x}):=\epsilon ^{-d/2}\int_{M}h\Big(\frac{\|\mathbf{x}-\mathbf{y}\|^{2}}{\epsilon }
\Big)u(\mathbf{y})dV(\mathbf{y})= u(\mathbf{x})+ \epsilon (\omega(\mathbf{x}) u(\mathbf{x}) + \Delta_g u(\mathbf{x}))+ \mathcal{O}(\epsilon^2),
\label{integralop1}
\end{split}\end{align}
where the second equality is valid for any $u\in C^3(M)$ and any $\mathbf{x}\in M$. \rev{To employ this asymptotic expansion in our approximation, we raise the regularity of the classical solution to be $C^3$ instead of the usual regularity assumption where $u\in C^2$ for continuous $f$.} Here, the function $h:[0,\infty )\rightarrow ( 0,\infty )$ is defined as $h(s)=\frac{e^{-s/4}}{(4\pi)^{d/2}}$ such that, effectively for a fixed bandwidth parameter $\epsilon>0$, $G_\epsilon$ is a local integral operator. In \eqref{integralop1}, $V$ denotes the volume form inherited by the manifold from the ambient space, the term $\omega$ depends on the geometry, \rev{ $\Delta_g = \text{div}_g \circ \nabla_g$ denotes the negative-definite Laplace-Beltrami operator,} $\|\cdot \|$ denotes the standard Euclidean norm for vectors in $\mathbb{R}^{n}$ and we will use the same notation for arbitrary finite-dimensional vector space. {Based on the asymptotic expansion in \eqref{integralop1}, one can approximate the differential operator $\mathcal{L}$ as follows,
\begin{align}\begin{split}
\mathcal{L}u(\mathbf{x}) =\frac{\sqrt{\kappa(\mathbf{x})}}{\epsilon}\big( G_\epsilon(u(\mathbf{x})\sqrt{\kappa(\mathbf{x})})-u(\mathbf{x})G_\epsilon \sqrt{\kappa(\mathbf{x})}\big)+ \mathcal{O}(\epsilon)
:=L_{\epsilon}u(\mathbf{x}) + \mathcal{O}(\epsilon).\label{integralapprox}
\end{split}\end{align}
}

In our setup, we assume that we are given a set of point cloud data $X:=\{ \mathbf{x}_i\in M\}_{i=1,\ldots, N}$,
independent and identically distributed (i.i.d.) according to $\pi$, with an empirical measure defined as,
$
\pi_N(\mathbf{x}) = \frac{1}{N} \sum_{i=1}^N \delta_{\mathbf{x}_i}(\mathbf{x}).
$
We use the notation $L^2(\pi)$ to denote the space of square-integrable functions with respect to the measure $\pi$. Accordingly, we define $L^2(\pi_N)$ as the space of functions $u:X\to \mathbb{R}^n$, endowed with the inner-product and norm-squared defined as,
\[
\langle u,u \rangle_{L^2(\pi_N)} = \|u\|_{L^2(\pi_N)}^2= \int_M u^2(x) d\pi_N(x) = \frac{1}{N}  \sum_{i=1}^N u^2(\mathbf{x}_i).
\]
Given point cloud data, we first approximate the sampling density, $q=d\pi/dV$ evaluated at $\mathbf{x}_i$, with $Q_i := \epsilon^{-d/2}N^{-1} \sum_{j=1}^N h\left(\frac{\|\mathbf{x}_i-\mathbf{x}_j\|^2}{\epsilon}\right)$. Define $\mathbf{W}\in\mathbb{R}^{N\times N}$ with entries, 
\BEA
\mathbf{W}_{ij}:= \epsilon^{-d/2-1}N^{-1} h\left(\frac{\|\mathbf{x}_i-\mathbf{x}_j\|^2}{\epsilon}\right)\sqrt{\kappa(\mathbf{x}_i)\kappa(\mathbf{x}_j)}Q_j^{-1}.\label{weightmatrix}
\EEA 
Define also a diagonal matrix $\mathbf{D}\in\mathbb{R}^{N\times N}$ with diagonal entries, $\mathbf{D}_{ii}= \sum_{j=1}^N\mathbf{W}_{ij}$. Then, we approximate the integral operator in \eqref{integralapprox} with the matrix $\mathbf{L}_\epsilon:= \mathbf{W} - \mathbf{D}$, similarly to a discrete unnormalized graph Laplacian matrix. Here, the matrix $\mathbf{L}_\epsilon$ is self-adjoint and semi negative-definite with respect to the inner-product in $\langle u,v\rangle_Q:=\frac{1}{N}\sum_{i=1}^N u(\mathbf{x}_i)v(\mathbf{x}_i)Q_i^{-1}$ such that it admits a non-positive spectrum, $0=\lambda_1>\lambda_2 \geq \ldots\geq \lambda_N$ with eigenvectors orthonormal in $\langle\cdot,\cdot\rangle_Q$.
Since the kernel function $h$ decays exponentially, the k-nearest-neighbor algorithm is usually used to impose sparsity to the estimator $\mathbf{L}_\epsilon$. The DM-based PDE solver approximates the PDE solution $u(\mathbf{x}_i)$ using the $i$-th component of the vector $\mathbf{u}_\epsilon\in\mathbb{R}^N$ that satisfies the linear system
\begin{equation}\label{eqn:dDM}
(-\mathbf{a}+\mathbf{L}_\epsilon)\mathbf{u}_\epsilon = \mathbf{f}
\end{equation}
of size $N$. Here, the $i$-th diagonal component of the diagonal matrix $\mathbf{a}\in\mathbb{R}^{N\times N}$ and the $i$-th component of $\mathbf{f}\in\mathbb{R}^N$ are $a(\mathbf{x}_i)$ and $f(\mathbf{x}_i)$, respectively. In \cite{gh:18}, this approach has been theoretically justified and numerically extended to approximate the non-symmetric, uniformly elliptic second-order differential operators associated to the generator of It\^o diffusions with appropriate local kernel functions.

{When the sampling density is not bounded away from zero or too far away from uniform distribution, the above operator
estimation obtained from the fixed bandwidth kernels can be unbounded. This issue can be
overcome by applying variable bandwidth kernels of the form:
\begin{equation}
K_{\epsilon ,\rho }(\mathbf{x},\mathbf{y})=\exp \left( -\frac{\left\Vert
\mathbf{x}-\mathbf{y}\right\Vert^2 }{4\epsilon \rho (\mathbf{x})\rho (\mathbf{y%
})}\right) ,\label{VBDM}
\end{equation}%
where $\rho $\ is a bandwidth function, chosen to be inversely proportional to the sampling density $q$. Since the sampling density is usually unknown, we 
first need to approximate it. While there are many ways to estimate density, in our algorithm we employ kernel density estimation with the following kernel that is closely related to the cKNN \cite{berry2019consistent} and self-tuning kernel \cite{zelnik2004self},
\[
K_{\epsilon ,0 }(\mathbf{x},\mathbf{y})=\exp \left( -\frac{\left\Vert
\mathbf{x}-\mathbf{y}\right\Vert^2 }{2\epsilon \rho_0(\mathbf{x})  \rho_0(\mathbf{y}) }\right),
\]
where $\rho_0(\mathbf{x}) := \Big(\frac{1}{k_2-1}\sum_{j=2}^{k_2} \|\mathbf{x} - \mathbf{x}_j \|^2\Big)^{1/2}$
denotes the average distance of $\mathbf{x}$ to the first $k_2$-nearest neighbors $\{\mathbf{x}_j\}, j=2,\ldots k_2$ excluding itself. Using this kernel, the sampling
density $q(\mathbf{x})$\ is estimated by $Q (\mathbf{x})=\sum_{j=1}^{N}K_{\epsilon ,0
}(\mathbf{x},\mathbf{x}_{j})/\rho_0( \mathbf{x})^{d}$\ at given point
cloud data. 

\rev{Before we estimate $\mathcal{L}$ with the variable bandwidth kernel in \eqref{VBDM}, let us give a brief overview of the estimation of the Laplace-Beltrami operator that occurs in the asymptotic expansion of the integral operator defined with kernel $K_{\epsilon,\rho}$. We refer interested readers to Eq.(A.12) in \cite{bh:16vb}) for the detailed of the asymptotic expansion, which is a generalization of \eqref{integralop1}, as it involves variable bandwidth function $\rho$. To estimate the Laplace-Beltrami operator, one chooses the bandwidth function to be $\rho(\mathbf{x})= q(\mathbf{x})^{\beta} \approx Q(\mathbf{x})^{\beta}$, with $\beta =-1/2$. With this bandwidth function, we employ the DM algebraic steps to approximate the Laplace-Beltrami operator. That is, }define $Q_\rho (\mathbf{x})=\sum_{j=1}^{N}K_{\epsilon ,\rho
}(\mathbf{x},\mathbf{x}_{j})/\rho( \mathbf{x})^{d}$. Then, we remove the sampling bias by applying a right
normalization $K_{\epsilon ,\rho ,\alpha }(\mathbf{x}_{i},\mathbf{x}%
_{j})= \frac{K_{\epsilon ,\rho }(\mathbf{x}_{i},\mathbf{x}_{j})}{Q_{\rho }(\mathbf{x}%
_{i})^{\alpha }Q_{\rho }(\mathbf{x}_{j})^{\alpha }}$,
where $\alpha = -d/4+1/2$.
We refer to \cite{harlim:18,bh:16vb} for more
details about the \rev{variable bandwidth diffusion map (VBDM)} estimator of weighted Laplacian with other choices of $\alpha$ and $\beta$. Define diagonal
matrices $\mathbf{Q}$ and $\mathbf{P}$ with entries $\mathbf{Q}_{ii}=Q_{\rho
}(\mathbf{x}_{i})$ and $\mathbf{P}_{ii}=\rho (\mathbf{x}_{i})$,
respectively, and also define the symmetric matrix $\mathbf{K}$ with entries
$\mathbf{K}_{ij}=K_{\epsilon ,\rho ,\alpha }(\mathbf{x}_{i},\mathbf{x}_{j})$%
. Next, one can obtain the \rev{VBDM}
estimator, $\mathbf{L}_{\epsilon ,\rho }:=\mathbf{P}^{-2}(\mathbf{D}^{-1}%
\mathbf{K-I})/\epsilon $, where $\mathbf{I}$\ is an identity matrix, as a discrete estimator to the Laplace-Beltrami operator in high probability. 

\rev{To approximate the differential
operator $\mathcal{L}$ in \eqref{ellipticPDE}, we use the fact that,}
\[
\mbox{div}_{g}(\kappa (\mathbf{x})\nabla _{g}u(\mathbf{x}))=\sqrt{\kappa }%
[\Delta _{g}(u\sqrt{\kappa })-u\Delta _{g}\sqrt{\kappa }].
\]%
This relation suggests that we can estimate $\mathcal{L}$ in \eqref{ellipticPDE}
with the matrix $\mathbf{L}_{\epsilon ,\rho }^{\kappa }:=\mathbf{AL}%
_{\epsilon ,\rho }\mathbf{A}-\mathbf{E}$, where $\mathbf{A}$ and $\mathbf{E}$
are diagonal matrices\ with entries $\mathbf{A}_{ii}=\sqrt{\kappa (\mathbf{x}%
_{i})}$ and $\mathbf{E}_{ii}=(\mathbf{AL}_{\epsilon ,\rho }\mathbf{A1)}_{i}$
with $\mathbf{1}$ being a vector with all entries equal to 1. Here, the
property of $\mathbf{L}_{\epsilon ,\rho }^{\kappa }$ is similar to that of a
discrete estimator $\mathbf{L}_{\epsilon ,\rho }$.\ The matrix $\mathbf{L}%
_{\epsilon ,\rho }^{\kappa }$ is self-adjoint and semi negative definite
with respect to the inner-product in $\left\langle u,v\right\rangle _{%
\mathbf{S}}:=\frac{1}{N}\sum_{i=1}^{N}u(\mathbf{x}_{i})v(\mathbf{x}_{i})%
\mathbf{S}_{ii}^{2},$ where $\mathbf{S}_{ii}$ is the diagonal entries of $%
\mathbf{S}=\mathbf{PD}^{1/2}$. Then it is clear that $\mathbf{L}_{\epsilon
,\rho }^{\kappa }$ also admits a non-positive spectrum $\{\lambda_i\}_{i=1}^{N}$ 
with $\lambda_1 = 0$ and an associated basis of eigenvectors 
$\{\psi_i\}_{i=1}^N$ orthonormal in 
$\left\langle \cdot ,\cdot \right\rangle _{\mathbf{S}}$ with $\psi_1 \equiv 1$.

\rev{Thus, we have two estimators, $\mathbf{L}_{\epsilon }$ obtained via the fixed bandwidth kernel $h$ in \eqref{integralop1} and $\mathbf{L}_{\epsilon ,\rho }^{\kappa }$ obtained via the variable bandwidth kernel in \eqref{VBDM}, for the differential operator $\mathcal{L}$. As we noted before, the motivation for VBDM estimation is to overcome samples that are far away from uniform. While theoretically, they induce the same estimate, practically, we found the VBDM estimate is more robust to the choice of $\epsilon$. Particularly, it produces accurate estimation when we specify $\epsilon$ using the auto-tuning method proposed in \cite{coifman2008TuningEpsilon} that is practically more convenient than hand-tuning $\epsilon$. To summarize, we refer to the VBDM-based PDE solver as the linear system in \eqref{eqn:dDM} but with DM estimator $\mathbf{L}_{\epsilon }$\ replaced with the VBDM estimator $\mathbf{L}_{\epsilon,\rho }^{\kappa }$.} In the remainder of this paper, we will only use the notation $\mathbf{L}_\epsilon$ as a discrete estimator of $\mathcal{L}$ and understand it as $\mathbf{L}_{\epsilon,\rho }^{\kappa }$ when VBDM is used.

Beyond the no boundary case, the approximation in \eqref{integralapprox}, unfortunately, will not produce an accurate approximation when $\mathbf{x}$ is sufficiently closed to the boundary. To overcome this issue, a modified DM algorithm, following the classical ghost points method to obtain a higher-order finite-difference approximation of Neumann problems (e.g. \cite{leveque2007finite}) was proposed in \cite{jiang2023ghost}. The proposed method, which is called the \emph{Ghost Point Diffusion Maps} (GPDM),
appends the point clouds with a set of ghost points away from the boundary along the outward normal collar such that the resulting discrete estimator is consistent in the $L^2(\mu_{N})$-sense, averaged over $N$ interior points on the manifold $M$ \cite{yan2021kernel}. To simplify the discussion in the remainder of this paper, we will use the same notation $\mathbf{L}_\epsilon$ to denote the discrete estimate of $\mathcal{L}$, obtained either via the classical or the ghost points diffusion maps. In Appendix~\ref{appendix_a}, we provide a brief review on the GPDM for Dirichlet boundary value problems.

\section{Solving PDEs on Unknown Manifolds using Diffusion Maps and Neural Networks}\label{NNsolver}

\comment{As mentioned previously, the DM-based PDE solver has several computational limitations. Namely, the estimated solution is in a discrete form with function values at the given point clouds data; the size of the matrix that approximates the differential operator increases as a function of the data size, which creates a computational bottleneck when the PDE solution involves a (pseudo) inversion operation or an eigenvalue decomposition. Hence,} We now present a new hybrid algorithm based on DM and NNs.

\subsection{Deep neural networks (DNNs)}\label{sec:fnn} Mathematically, DNNs are highly nonlinear functions constructed by compositions of simple nonlinear functions. For simplicity, we consider the fully connected feed-forward neural network (FNN), which is the composition of $L$ simple nonlinear functions as follows:
$\phi(\mathbf{x};\bm{\theta}):=\mathbf{a}^\top \mathbf{h}_L \circ \mathbf{h}_{L-1} \circ \cdots \circ \mathbf{h}_{1}(\mathbf{x})$,
 where $\mathbf{h}_{\ell}(\mathbf{x})=\sigma\left(\mathbf{W}_\ell \mathbf{x} + \mathbf{b}_\ell \right)$ with $\mathbf{W}_\ell \in \mathbb{R}^{N_{\ell}\times N_{\ell-1}}$, $\mathbf{b}_\ell \in \mathbb{R}^{N_\ell}$ for $\ell=1,\dots,L$, $\mathbf{a}\in \mathbb{R}^{N_L}$, $\sigma$ is a nonlinear activation function, e.g., a rectified linear unit (ReLU) $\sigma(x)=\max\{x,0\}$. Each $\mathbf{h}_\ell$ is referred as a hidden layer,  $N_\ell$ is the width of the $\ell$-th layer, and $L$ is called the depth of the FNN. In the above formulation, $\bm{\theta}:=\{\mathbf{a},\,\mathbf{W}_\ell,\,\mathbf{b}_\ell:1\leq \ell\leq L\}$ denotes the set of all parameters in $\phi$. For simplicity, we focus on FNN with a uniform width $m$, i.e., $N_\ell = m$ for all $\ell\neq 0$, in this paper.

\subsection{Supervised Learning} Supervised learning approximates an unknown target function $f:\mathbf{x}\in\Omega \rightarrow y{\in \mathbb{R}}$ from training samples $\{(\mathbf{x}_i,y_i)\}_{i=1}^N$, where $\mathbf{x}_i$'s are usually assumed to be i.i.d samples from an underlying distribution $\pi$ {defined on a domain $\Omega\subseteq\mathbb{R}^n$,} and $y_i=f(\mathbf{x}_i)$. Consider the square loss $\ell(\mathbf{x},y;\bm{\theta})=\frac{1}{2}\left| \phi(\mathbf{x};\bm{\theta})-y\right|^2$ of a given DNN $\phi(\mathbf{x};\bm{\theta})$ that is used to approximate $f(\mathbf{x})$,   the population risk (error) and empirical risk (error) functions are, respectively,
\begin{equation}\label{eqn:pop}
\mathcal{J}(\bm{\theta})=\frac{1}{2}\E_{\mathbf{x}\sim \pi}\left[ \left| \phi(\mathbf{x};\bm{\theta})-f(\mathbf{x})\right|^2 \right],\quad \hat{\mathcal{J}}(\bm{\theta})=\frac{1}{2N}\sum_{i=1}^N \left| \phi(\mathbf{x}_i;\bm{\theta})-y_i\right|^2.
\end{equation}
The optimal set $\hat{\bm{\theta}}$ is identified via $\hat{\bm{\theta}}=\argmin_{\bm{\theta}} \hat{ \mathcal{J}}(\bm{\theta})$, and $\phi(\mathbf{x};\hat{\bm{\theta}})$ is the learned DNN that {approximates} the unknown function $f$.

\subsection{The NN-based PDE solver with DM} Solving a PDE can be transformed into a supervised learning problem. Physical laws, like PDEs and boundary conditions, are used to generate training data in a supervised learning problem to infer the solution of PDEs. In the case when the PDE is defined on a manifold, we propose to use DM as an approximation to the differential operator according to \eqref{integralapprox} to obtain a linear system in \eqref{eqn:dDM}, apply NN to parametrize the PDE solution, and adopt a least-square framework to identify the NN that approximates the PDE solution. For example, to solve the PDE problem in \eqref{ellipticPDE} \rev{in the strong sense} on a $d$-dimensional closed, smooth manifold $M$ identified with points $X=\{\mathbf{x}_1,\ldots,\mathbf{x}_N\}\subset M$, we minimize the following empirical loss
\begin{eqnarray}\label{eqn:DMNN1}
{\bm{\theta}}_S=\argmin_{\bm{\theta}} \mathcal{J}_{S,\epsilon}(\bm{\theta}):= \argmin_{\bm{\theta}} {\frac{1}{2}}\|(-\mathbf{a}+\mathbf{L}_\epsilon) \bm{\phi}_{\bm{\theta}} - \mathbf{f}\|_{L^2(\pi_N)}^2,
\end{eqnarray}
where $\mathbf{L}_\epsilon\in\mathbb{R}^{N\times N}$ denotes the DM estimator for the differential operator $\mathcal{L}$, $\mathbf{a}$ and $\mathbf{f}$ are defined in \eqref{eqn:dDM}, $\bm{\phi}_{\bm{\theta}}\in \mathbb{R}^N$ with the $i$-th entry as $\phi(\mathbf{x}_i;\bm{\theta})$.
When $\mathbf{a}=\mathbf{0}$, we add a regularization term $\frac{\gamma}{2}\|\bm{\phi}_{\bm{\theta}}\|_{L^2(\pi_N)}^2$ in the loss function~\eqref{eqn:DMNN1} to guarantee well-posedness, where $\gamma>0$ is a regularization parameter.
When stochastic gradient descent (SGD) is used to minimize \eqref{eqn:DMNN1}, a small subset of the given point clouds is randomly selected in each iteration. This amounts to randomly choosing batches, consisting of a few rows of $(-\mathbf{a}+\mathbf{L}_\epsilon)$ and a few entries of $\mathbf{f}$, to approximate the empirical loss function.


In the case of Dirichlet problems with non-homogeneous
boundary conditions, $u(\mathbf{x})=g(\mathbf{x})$, $\forall \mathbf{x}\in \partial M$, given
boundary points $\{\bar{\mathbf{x}}_1,\ldots,\bar{\mathbf{x}}_{N_b}\} \subset X\cap \partial M$ as the last $N_b$ points of $X$, a penalty term is added to \eqref{eqn:DMNN1} to enforce the boundary condition as follows:
\begin{align}\begin{split}
\label{eqn:DMNN2}
{\bm{\theta}}_S
\comment{\argmin_{\bm{\theta}} \mathcal{J}_{S,\epsilon}(\bm{\theta})\\:&} := \argmin_{\bm{\theta}} {\frac{1}{2}}\|(-\mathbf{a}+\mathbf{L}_\epsilon) \bm{\phi}_{\bm{\theta}} - \mathbf{f}\|_{L^2(\pi_{N-N_b})}^2 + {\frac{\lambda}{2}} \| \bm{\phi}^b_{\bm{\theta}}-\mathbf{g}\|_{L^2(\pi_{N_b})}^2,
\end{split}\end{align}
where {$\mathbf{L}_\epsilon\in\mathbb{R}^{(N-N_b)\times N}$ denotes the GPDM estimator for the differential operator $\mathcal{L}$ defined for problem with boundary and $\lambda>0$ is a hyper-parameter.
The construction of the matrix $\mathbf{L}_\epsilon$ is discussed in Appendix~\ref{appendix_a}.
Letting $\T$ denotes the transpose operator, we have also defined a column vector $\bm{\phi}^b_{\bm{\theta}} = \big(\phi(\mathbf{\bar{x}}_1;\bm{\theta}),\ldots, \phi(\mathbf{\bar{x}}_{N_b};\bm{\theta})\big)^{\T} \in \mathbb{R}^{N_b}$, whose components are also elements of the column vector $\bm{\phi}_{\bm{\theta}}= \big(\phi(\mathbf{{x}}_1;\bm{\theta}),\ldots, \phi(\mathbf{{x}}_{N};\bm{\theta})\big)^{\T}\in \mathbb{R}^N$; the column vector $\mathbf{f} = \big(f(\mathbf{x}_1),\ldots,f(\mathbf{x}_{N-N_b})\big)^{\T} \in\mathbb{R}^{N-N_b}$ with function values on the interior points; and the column vector $\mathbf{g} = \big(g(\bar{\mathbf{x}}_1),\ldots,g(\bar{\mathbf{x}}_{N_b})\big)^{\T} \in\mathbb{R}^{N_b}$ with function values on the boundary points. In the case of other kinds of boundary conditions, a corresponding boundary operator can be applied to $\bm{\phi}^b_{\bm{\theta}}$ to enforce the boundary condition.
}

\section{Theoretical Foundation of the Proposed Algorithm}\label{theory}

The classical machine learning theory concerns with characterizations of the approximation error, optimization error estimation, and generalization error analysis.
For the proposed PDE solver, the approximation theory involves characterizing: 1) the error of the DM-based discrete approximation of the differential operator on manifolds, and 2) the error of NNs for approximating the PDE solution. In the optimization algorithm, a numerical minimizer (denoted as $\vtheta_N$) provided by a certain algorithm might not be a global minimizer of the empirical risk minimization in \eqref{eqn:DMNN1} and \eqref{eqn:DMNN2}. Therefore, designing an efficient optimization algorithm such that the optimization error $|\mathcal{J}_{S,\epsilon}(\vtheta_N)- \mathcal{J}_{S,\epsilon}(\vtheta_S)|\approx 0$ is important. In the generalization analysis, the goal is to quantify the error defined as $\|u - \phi(\cdot,\bm{\theta_S})\|_{L^2(\pi)}$ over the distribution $\mathbf{x}\sim\pi$ that is unknown.

In approximation theory, the error analysis of DM is relatively well-developed, while the error of NNs is still under active development. Recently, there are two directions have been proposed to characterize the approximation capacity of NNs. The first one characterizes the approximation error in terms of the total number of parameters in an NN \cite{yarotsky2017,yarotsky18a,Montanelli2019_3,E2018,Montanelli2019,yarotsky2019,PETERSEN2018296,guhring2019error,SIEGEL2020313}. The second one quantifies the approximation error in terms of NN width and depth \cite{Shen2,Shen3,Shen4,Shen5,Shen6,Wang}. In real applications, the width and depth of NNs are the required hyper-parameters to decide for numerical implementation instead of the total number of parameters. Hence, we will develop the approximation theory for the proposed PDE solver adopting the second direction.

In optimization theory, for regression problems without regularization (e.g., the right equation of \eqref{eqn:pop}), it has been shown in \cite{Arthur18,Simon18,MeiE7665,du2018gradient,Yiping20} that GD and SGD algorithms can converge to a global minimizer of the empirical loss function under the assumption of over-parametrization (i.e., the number of parameters are much larger than the number of samples). However, existing results for regression problems cannot be applied to the minimization problem corresponding to PDE solvers, which is much more difficult due to differential operators and boundary operators. A preliminary attempt was conducted in \cite{Luo2020}, but the results in \cite{Luo2020} cannot be directly applied to our minimization problem in \eqref{eqn:DMNN1} and \eqref{eqn:DMNN2}. Below, we will develop a new analysis to show that GD can identify a global minimizer of \eqref{eqn:DMNN1}.

The generalization analysis aims at quantifying the convergence of the generalization error $\|u-\phi(\cdot;\bm{\theta}_S)\|_{L^2(\pi)}$. 
Let $u(X)\in\mathbb{R}^N$ be a column vector representing the evaluation of $u$ on the training data set $X=\{\mathbf{x}_1,\ldots,\mathbf{x}_N\}$ and this notation is used similarly for other functions. 
Beyond the identification of $\bm{\theta_S}$ (a global minimizer of the empirical loss), which is addressed in the optimization theory, a typical approach is to estimate the difference between $\|u-\phi(\cdot;\bm{\theta}_S)\|_{L^2(\pi)}$ and $\|u(X)-\phi(X;\bm{\theta}_S)\|_{L^2(\pi_N)}$ via statistical learning theories. 
There have been several papers for the generalization error analysis of PDE solvers. In \cite{DBLP:journals/corr/abs-1809-03062,Han2018ConvergenceOT,PINNgen,lu2021priori,duan2021convergence,hong2021priori,lu2021priori2}, they show that minimizers of the empirical risk minimization can facilitate a small generalization error if these minimizers satisfy a certain norm constraint (e.g., with a small parameter norm or corresponding to an NN with a small Lipschitz constant). However, it is still an open problem to design numerical optimization algorithms to identify such a minimizer. Another direction is to regularize the empirical risk minimization so that a global minimizer of the regularized loss can generalize well \cite{Luo2020}. Nevertheless, there is no global convergence analysis of the optimization algorithm for the regularized loss, which suggests that there is no guarantee that one can practically obtain the global minimizer of the regularized loss that can generalize well.

The theoretical analysis in this paper focuses on the approximation and optimization perspectives of the proposed PDE solver to develop an error analysis of $\|u(X)-\phi(X;\bm{\theta}_S)\|_{L^2(\pi_N)}$. In the discussion below, we restrict to the case of manifolds without boundaries to convey our main ideas for the theoretical analysis of the proposed algorithm. We further assume that the ReLU activation function, i.e., $\max\{x,0\}$, its power, e.g., ReLU$^r$, and FNNs are used in the analysis. An extension to other activation functions and network architectures might be possible. We will show that the numerical solution of the proposed solver is consistent with the ground truth in appropriate limits, assuming that a global minimizer of our optimization problem is obtainable. We will also show that GD can identify a global minimizer of our optimization problem for two-layer NNs when their width is sufficiently large. The optimization theory together with the parametrization error analysis forms the theoretical foundation of the convergence analysis of the proposed PDE solver on manifolds.

\comment{

We should point out that $\bm{\theta_S}$ depends on $m$, $L$, $\epsilon$, and $N$. Hence, let us write it as $\bm{\theta_S}(N,\epsilon,m,L)$. The generalization error is defined as
\BEA
\|u - \phi(\cdot,\bm{\theta_S}(N,\epsilon,m,L))\|_{L^2(\pi)}.
\EEA
A typical method to analyze the generalization error is to use the Rademacher complexity (see Theorem B.1) to bound
\BEA
\left|\|u(X) - \phi(X,\bm{\theta_S}(N,\epsilon,m,L)) \|_{L^2(\pi_N)} - \|u - \phi(\cdot,\bm{\theta_S}(N,\epsilon,m,L))\|_{L^2(\pi)}\right|\leq C(N,\epsilon,m,L).
\EEA
If $C(N,\epsilon,m,L)$ and $\|u(X) - \phi(X,\bm{\theta_S}(N,\epsilon,m,L)) \|_{L^2(\pi_N)}$ are small, then the generalization error is small. In the theoretical analysis of the proposed PDE solver on unknown manifolds, we will quantitatively characterize $\|u(X) - \phi(X,\bm{\theta_S}(N,\epsilon,m,L)) \|_{L^2(\pi_N)}$ and show that it will converges to zero in the limit of $\epsilon \to 0$, $N\to \infty$, and $m\to \infty$ for any fixed $L$.}

\subsection{Parametrization Error} The proposed PDE solver on manifolds applies DM to parametrize differential operators on manifolds and uses an NN to parametrize the PDE solution. We will quantify the parametrization error due to these two ideas, assuming that a global minimizer of the empirical loss minimization in \eqref{eqn:DMNN1} is achievable via a certain numerical optimization algorithm, i.e., estimating $\|\mathbf{u}-\bm{\phi}_{S}\|_{L^2(\pi_N)}$, where $\bm{\phi}_{S}\in \mathbb{R}^N$ with the $i$-th entry as $\phi(\mathbf{x}_i;\bm{\theta_S})$
is the NN-solution of the PDE in \eqref{eqn:DMNN1}. 

\begin{theo}[Parametrization Error]\label{thm:pconv}
Let $u$ be the solution of \eqref{ellipticPDE} with $0<a_{\min}\leq a(x)\leq a_{\max}$ on $X=\{\mathbf{x}_1,\ldots,\mathbf{x}_N\}$, randomly sampled from a distribution $\pi$ on $M\subset [0,1]^n$, where $M$ is a $C^4$-manifold with condition number $\tau_M^{-1}$, volume $V_M$, and geodesic covering regularity $G_M$. For
$u,\kappa \in C^4(M)\cap  L^2(M)$ and $q\in C(M)$, where $q = d\pi/dV$, with probability higher than $1-N^{-2}$,
\begin{align}\begin{split}
\|u(X)-\phi(X;\bm{\theta_S})\|_{L^2(\pi_N)} = \mathcal{O} \left(\epsilon,\Big(\frac{\log(N)}{N}\Big)^{\frac{1}{2}} \epsilon^{-2-\frac{d}{4}},\Big(\frac{\log(N)}{N}\Big)^{\frac{1}{2}}\epsilon^{-\frac{1}{2}-\frac{d}{4}},N^{1/2}\epsilon^{-1} m^{-\frac{8}{d\ln(n)}}L^{-\frac{8}{d\ln(n)}}\right),\label{perror}
\end{split}\end{align}
as $\epsilon\to 0$, after $N\to\infty$ and $m$ or $L\to\infty$. Hence,
$\lim_{\epsilon\to 0}\lim_{N\to \infty}\lim_{m\to \infty} \|u(X)-\phi(X;\bm{\theta_S})\|_{L^2(\pi_N)} = 0$. Here $\phi(\mathbf{x};\bm{\theta_S})$ has width $\mathcal{O}(n\ln(n)m\log(m))$ and depth $\mathcal{O}(L\log(L)+\ln(n))$ with $m\in\mathbb{N}^+$ and $L\in\mathbb{N}^+$ as two integer hyper-parameters.
\end{theo}

In \eqref{perror} and throughout this paper, the big-oh notation means  $\mathcal{O}(f,g) := \mathcal{O}(f)+ \mathcal{O}(g)$,
as $f,g\to 0$. The first three error terms of \eqref{perror} come from the DM discretization, whereas the last error term is due to the approximation property of NNs. { The sequence of convergence in the big-O notation is due to the fact that $\epsilon>0$ is fixed when we analyze the discretization error. For example, the first error rate in \eqref{perror} corresponds to the error induced by the integral operator approximation in \eqref{integralapprox}, whereas the third error rate in \eqref{perror} corresponds to the discretization of $L_\epsilon$ using $\mathbf{L}_\epsilon$ for which one deduces error rate as $N \to \infty$ with fixed $\epsilon$. The same reasoning applies to the NN approximation. }

In \eqref{perror}, in the case of uniformly sampled data, the second error term, induced by the estimation of the sampling density $q$, becomes irrelevant and the factor $N^{1/2} $ in the last error term disappears due to symmetry. In such a case, the number of data points needed to achieve order-$\epsilon$ of the DM discretization is $N\geq N_0 = \mathcal{O}(\epsilon^{-\frac{6+d}{2}})$, obtained by balancing the first and third error terms in \eqref{perror}. The NN width to achieve the same accuracy is $m\geq m_0 = \mathcal{O}(\epsilon^{-\frac{d\ln(n)}{4}})$, obtained by balancing the first and last error terms in \eqref{perror}.

Before we prove Theorem \ref{thm:pconv}, let us review relevant results that will be used for the proof.

\vspace{0.25cm}
\noindent {\bf The Parametrization Error of DM.} For reader's convenience, we briefly summarize the pointwise error bound of the discrete estimator, which has been
reported extensively (see e.g., \cite{cl:06,SingerEstimate,bh:16vb,dunson2021spectral}). Our particular interest is to quantify the error induced by the matrix $\mathbf{L}_{a,\epsilon}:=-\mathbf{a}+\mathbf{L}_\epsilon$, where $\mathbf{L}_\epsilon$ is defined right after \eqref{integralapprox} 
and $\mathbf{a}$ is a diagonal matrix with diagonal entries $\mathbf{a}_{ii}=a(\mathbf{x}_i)$.

First, let us quantify the Monte-Carlo error of the discretization of the integral operator, i.e., the error for introducing $L_\epsilon$ in \eqref{integralapprox}. In particular,
using the Chernoff bound (see Appendix B.2 in \cite{bh:16vb} or Appendix A in \cite{gh:18}), for any $\mathbf{x}_i\in X$ and any fixed $\epsilon, \eta>0$, and $u\in L^2(M)$, we have
\BEA
\mathbb{P}(|(\mathbf{L}_{\epsilon}\mathbf{u})_i-L_\epsilon u(\mathbf{x}_i)|>\eta) < 2\exp\Big(-C \frac{\eta^2\epsilon^{d/2+1}N}{\|\nabla_g u(\mathbf{x}_i)\|^2 q(\mathbf{x}_i)^{-1}}\Big), \nonumber
\EEA
for some constant $C$ that is independent of $\epsilon$ and $N$.
{ This version of Chernoff bound accounts for the variance error and  yields similar rate as in \cite{hein2007graph}, which uses the Bernstein inequality.}
Choosing $N^2 = \frac{1}{2}\exp\Big(C \frac{\eta^2\epsilon^{d/2+1}N}{\|\nabla_g u(\mathbf{x}_i)\|^2 q(\mathbf{x}_i)^{-1}}\Big) $, one can deduce that  \[\eta = C^{-1/2}\Big(\frac{\log (2N^2)}{N}\Big)^{1/2}\epsilon^{-1/2-d/4}\|\nabla_g u(\mathbf{x}_i)\| q(\mathbf{x}_i)^{-1/2},\] which means that with probability greater than $1-N^{-2}$, 
\BEA
(\mathbf{L}_{\epsilon}\mathbf{u})_i = L_\epsilon u(\mathbf{x}_i) + \mathcal{O}\Big(\frac{\|\nabla_gu(\mathbf{x}_i)\| q(\mathbf{x}_i)^{-1/2}(\log(N))^{1/2}}{N^{1/2}\epsilon^{1/2+d/4}}\Big),\nonumber
\EEA
as $N\to\infty$. When the density $q=d\pi/dV$ is non-uniform, one can use the same argument (e.g., see Appendix B.1 in \cite{bh:16vb}) to deduce the error induced by the estimation of the density, which is of order $\mathcal{O}(q(\mathbf{x}_j)^{1/2}(\log(N)/N)^{-1/2}\epsilon^{-2-d/4})$ with probability $1-N^{-2}$, to ensure a density estimation of order-$\epsilon^2$. Together with \eqref{integralapprox}, we have the following pointwise error estimate.
\begin{lem} Let $u,\kappa\in C^3(M)\cap L^2(M)$ and $q\in C(M)$, where $M\subseteq\mathbb{R}^n$ is a $d$-dimensional closed $C^3-$submanifold, then for any $\mathbf{x}_i\in X$, with probability higher than $1-N^{-2}$,
\BEA
(\mathbf{L}_{a,\epsilon}\mathbf{u})_i - \mathcal{L}_au(\mathbf{x}_i) =(\mathbf{L}_{\epsilon}\mathbf{u})_i - \mathcal{L}u(\mathbf{x}_i) = \mathcal{O}\Big(\epsilon,\frac{q(\mathbf{x}_i)^{1/2}(\log(N))^{1/2}}{N^{1/2}\epsilon^{2+d/4}},\frac{\|\nabla_gu(\mathbf{x}_i)\| q(\mathbf{x}_i)^{-1/2}(\log(N))^{1/2}}{N^{1/2}\epsilon^{1/2+d/4}}\Big)\label{discreteforwarderror},
\EEA
as $\epsilon\to 0$ after $N\to\infty$.
\end{lem}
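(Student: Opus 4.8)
The plan is to assemble the claimed pointwise estimate \eqref{discreteforwarderror} from three ingredients, each of which has already been worked out (or sketched) in the discussion immediately preceding the statement: (i) the deterministic asymptotic expansion $L_\epsilon u(\mathbf{x}) = \mathcal{L}u(\mathbf{x}) + \mathcal{O}(\epsilon)$ from \eqref{integralapprox}, valid for $u,\kappa\in C^3(M)$; (ii) the Monte-Carlo (variance) error in replacing the integral operator $G_\epsilon$ by its empirical average over the point cloud $X$, controlled via a Chernoff/Bernstein bound; and (iii) the additional error incurred because the sampling density $q=d\pi/dV$ is itself estimated from the data through the normalization $Q_i$. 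The final estimate is obtained by a triangle inequality combining these three contributions, and the factorization $(\mathbf{L}_{a,\epsilon}\mathbf{u})_i - \mathcal{L}_a u(\mathbf{x}_i) = (\mathbf{L}_\epsilon \mathbf{u})_i - \mathcal{L}u(\mathbf{x}_i)$ is immediate since $\mathbf{L}_{a,\epsilon} = -\mathbf{a}+\mathbf{L}_\epsilon$ and $\mathcal{L}_a = -a + \mathcal{L}$ agree exactly on the deterministic pointwise term $-a(\mathbf{x}_i)u(\mathbf{x}_i)$, which involves no approximation.

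First I would recall the bias term: by \eqref{integralop1} applied to $u$ and to $\sqrt{\kappa}\,u$ and to $\sqrt{\kappa}$, together with the algebra in \eqref{integralapprox}, one gets $\mathcal{L}u(\mathbf{x}) = L_\epsilon u(\mathbf{x}) + \mathcal{O}(\epsilon)$ pointwise for $\mathbf{x}\in M$, where the implied constant depends on $\|u\|_{C^3}$, $\|\kappa\|_{C^3}$, and geometric quantities of $M$ (curvature bounds, reach $\tau_M$, etc.); this is the source of the first $\mathcal{O}(\epsilon)$ in \eqref{discreteforwarderror}. Second, I would treat the variance term: writing $(\mathbf{L}_\epsilon \mathbf{u})_i$ (with the \emph{true} density $q$ in place of the estimate $Q_j$) as a sum $\frac{1}{N}\sum_j Z_j$ of i.i.d. bounded random variables with mean $L_\epsilon u(\mathbf{x}_i)$, I would apply the Chernoff bound quoted in the text (Appendix B.2 of \cite{bh:16vb} / Appendix A of \cite{gh:18}) to obtain $\mathbb{P}(|(\mathbf{L}_\epsilon\mathbf{u})_i - L_\epsilon u(\mathbf{x}_i)| > \eta) < 2\exp(-C\eta^2 \epsilon^{d/2+1} N / (\|\nabla_g u(\mathbf{x}_i)\|^2 q(\mathbf{x}_i)^{-1}))$; the variance scaling $\epsilon^{-d/2-1}$ comes from the $\epsilon^{-d/2-1}$ prefactor in $\mathbf{W}_{ij}$ combined with the $O(\epsilon^{d/2})$ effective support of the kernel, and the $\|\nabla_g u\|^2$ factor comes from the fact that the leading-order fluctuation is a centered first-difference $u(\mathbf{x}_j)-u(\mathbf{x}_i)$. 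Setting the right-hand side equal to $N^{-2}$ and solving for $\eta$ gives the third error term $\mathcal{O}(\|\nabla_g u(\mathbf{x}_i)\| q(\mathbf{x}_i)^{-1/2} N^{-1/2}\epsilon^{-1/2-d/4})$ after dropping the $\sqrt{\log N}$ factor as stated. Third, I would account for the density-estimation error: the normalization uses $Q_j = \epsilon^{-d/2}N^{-1}\sum_k h(\|\mathbf{x}_j-\mathbf{x}_k\|^2/(4\epsilon))$ as a plug-in for $q(\mathbf{x}_j)$ (up to the known $O(\epsilon)$ DM bias, which is absorbed into the first term), and a second Chernoff bound (Appendix B.1 of \cite{bh:16vb}) on this density estimator, propagated linearly through $Q_j^{-1}$ in $\mathbf{W}_{ij}$, contributes the middle term $\mathcal{O}(q(\mathbf{x}_i)^{1/2} N^{-1/2}\epsilon^{-2-d/4})$; here the worse $\epsilon$-power $\epsilon^{-2}$ reflects that one insists on estimating $q$ to order $\epsilon^2$ so that its error does not pollute the leading $\mathcal{O}(\epsilon)$ bias, and that the error propagates through an $\epsilon^{-1}$ factor in $\mathbf{L}_\epsilon$. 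A union bound over these two (or finitely many) bad events keeps the total failure probability below $1-N^{-2}$ up to constants, which is what the statement claims (the exponent $2$ can always be arranged by adjusting constants, since we are free to rescale).

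The main obstacle, and the place where care is genuinely required, is the density-estimation step: the estimate $Q_j$ appears inside $\mathbf{W}_{ij}$ as $Q_j^{-1}$, so one must linearize $Q_j^{-1} = q(\mathbf{x}_j)^{-1}(1 - (Q_j-q(\mathbf{x}_j))/q(\mathbf{x}_j) + \cdots)$ and control both the first-order fluctuation and the higher-order remainder uniformly in $j$ over the active neighbors of $\mathbf{x}_i$, while simultaneously tracking how the $\epsilon^{-1}$ normalization in $\mathbf{L}_\epsilon$ amplifies this error — this is exactly what produces the $\epsilon^{-2-d/4}$ scaling rather than the naive $\epsilon^{-1/2-d/4}$. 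A secondary technical point is that the kernel sum defining $(\mathbf{L}_\epsilon \mathbf{u})_i$ includes the self-term $j=i$, which must be shown to be negligible (it contributes $O(N^{-1})$ relative to the rest), and that the boundedness of the summands $Z_j$ needed for the Chernoff bound follows from the compactness of $M$, the strict positivity of $q$, and the $C^3$ (indeed the $L^2$ plus $C^3$) hypotheses on $u$ and $\kappa$. Since all of these computations are carried out in the cited references for the unnormalized graph Laplacian, the proof is essentially a matter of quoting those bounds with the present normalization and collecting terms via the triangle inequality; I would present it at that level of detail rather than re-deriving the asymptotics from scratch.
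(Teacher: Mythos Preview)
Your proposal is correct and follows essentially the same route as the paper: the lemma is simply a summary of the discussion immediately preceding it, which assembles the deterministic bias $\mathcal{O}(\epsilon)$ from \eqref{integralapprox}, the Monte-Carlo fluctuation bound via the Chernoff inequality from the cited appendices of \cite{bh:16vb,gh:18}, and the density-estimation error from Appendix~B.1 of \cite{bh:16vb}, together with the trivial cancellation of the $-a(\mathbf{x}_i)u(\mathbf{x}_i)$ term. Your treatment is in fact more detailed than the paper's, which presents these three ingredients in the text and then states the lemma without a separate proof block.
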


If we allow for $u,\kappa \in C^4(M)$, then by Lemma 3.3 in \cite{hsay:20}, we can replace the first-order error term in \eqref{discreteforwarderror} by $R_u(x)\epsilon^{4\beta-1}$, where $0<\beta<1/2$ such that
\BEA
\|R_u\|_{L^2(M)} \leq C\|u\|_{H^4(M)} \|\sqrt{\kappa}\|^2_{C^{4}(M)}<\infty\nonumber
\EEA
for some constant $C>0$ for $u,\kappa\in C^4(M)$. The term $R_u$ will also appear in the second-error term as well since this error bound is to ensure the density estimation to achieve $\mathcal{O}(\epsilon^2)$.  Also, with the given assumptions, it is immediate to show that $\|R_u q^{1/2}\|^2_{L^2(\pi)}, \|R_u \|^2_{L^2(\pi)}<\infty$.

For a fixed $\epsilon>0$ and using the fact that $\lim_{N\to\infty}\frac{1}{N}\sum_{i=1}^Nf(\mathbf{x}_i)^2=\int_M f(x)^2q(x)dV(x)= \|f\|^2_{L^2(\pi)}$, we have,
\BEA
\lim_{N\to\infty}\| \mathbf{L}_{a,\epsilon}\mathbf{u} - \mathcal{L}_au(X) \|^2_{L^2(\pi_N)} &=& \lim_{N\to\infty} \frac{1}{N}\sum_{i=1}^N |(\mathbf{L}_{a,\epsilon}\mathbf{u})_i - \mathcal{L}_au(x_i) |^2 \notag 
\\&\leq & C_1 \epsilon^{8\beta-2} \lim_{N\to\infty} \frac{1}{N} \sum_{i=1}^NR_u(x_i)^2 + C_2 \epsilon^{-4-\frac{d}{2}}\lim_{N\to\infty}\frac{1}{N^{2}} \sum_{i=1}^N(R_u(x_i)^2q(x_i)) \notag \\ 
&& + C_3 \epsilon^{-1-\frac{d}{2}}\lim_{N\to\infty}\frac{1}{N^{2}}\sum_{i=1}^N \|\nabla_gu(x_i)\|^2(q(x_i)^{-1}) \notag \\
&=& C_1 \epsilon^{8\beta-2}\| R_u \|^2_{L^2(\pi)}  + \big(\lim_{N\to\infty} \frac{1}{N}\big) \big(C_2 \epsilon^{-4-\frac{d}{2}}\| R_u q^{1/2}\|^2_{L^2(\pi)} + C_3 \epsilon^{-1-\frac{d}{2}} \|u\|_{H^1(M)}\big)  \notag\\
&=& C_1 \epsilon^{8\beta-2}\| R_u \|^2_{L^2(\pi)}.\notag
\EEA
To conclude, we have the following lemma.
\begin{lem}
Let $u, \kappa \in C^4(M)$ and $q\in C(M)$ and let $M\in\mathbb{R}^n$ be a $d-$dimensional, closed, $C^3$-submanifold. Then, 
\BEA
\lim_{\epsilon\to 0}\lim_{N\to\infty}\| \mathbf{L}_{a,\epsilon}\mathbf{u} - \mathcal{L}_au(X) \|_{L^2(\pi_N)} = 0,\label{DML2consistency}
\EEA
{almost surely.}
\end{lem}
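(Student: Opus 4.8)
The proof is a direct continuation of the displayed estimate that immediately precedes the statement, so the plan is short. First I would invoke the refined pointwise error bound: combining the preceding pointwise lemma (which gives, at each $\mathbf{x}_i\in X$ and with probability at least $1-N^{-2}$, that $(\mathbf{L}_{a,\epsilon}\mathbf{u})_i-\mathcal{L}_a u(\mathbf{x}_i)=(\mathbf{L}_\epsilon\mathbf{u})_i-\mathcal{L}u(\mathbf{x}_i)=\mathcal{O}(\epsilon,N^{-1/2}\epsilon^{-2-d/4},N^{-1/2}\epsilon^{-1/2-d/4})$, the zeroth-order multiplication term $-\mathbf{a}$ contributing no error) with Lemma 3.3 of \cite{hsay:20}, which is available because $u,\kappa\in C^4(M)$, to upgrade the leading $\mathcal{O}(\epsilon)$ term to $R_u(\mathbf{x}_i)\epsilon^{4\beta-1}$ for a fixed exponent $\beta$ that I would take in the window $1/4<\beta<1/2$; here $\|R_u\|_{L^2(M)}\leq C\|u\|_{H^4(M)}\|\sqrt{\kappa}\|_{C^4(M)}^2<\infty$, and the stated hypotheses also give $\|R_u q\|_{L^2(\pi)}^2,\|R_u\|_{L^2(\pi)}^2<\infty$.

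Second, I would square this pointwise identity, expand the square of the trinomial, absorb the three cross terms into the squared terms by Young's inequality (this is what produces the absolute constants $C_1,C_2,C_3$), and average over $i=1,\dots,N$ to obtain $\|\mathbf{L}_{a,\epsilon}\mathbf{u}-\mathcal{L}_a u(X)\|_{L^2(\pi_N)}^2$. For each fixed $\epsilon>0$ I would then let $N\to\infty$ using the strong law of large numbers, $\frac{1}{N}\sum_{i=1}^N g(\mathbf{x}_i)^2\to\|g\|_{L^2(\pi)}^2$, applied to $g=R_u$, $g=R_u q$, and $g=\|\nabla_g u\|q^{-1/2}$. The key observation is that the two Monte-Carlo/density-estimation contributions carry an extra prefactor $1/N$ — the Chernoff threshold $\eta$ scales like $N^{-1/2}$, so its square scales like $N^{-1}$ — hence each is $(1/N)$ times a convergent empirical average and therefore vanishes in the limit (the suppressed $\sqrt{\log N}$ factor is likewise killed since $\log N/N\to0$). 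This leaves exactly $\lim_{N\to\infty}\|\mathbf{L}_{a,\epsilon}\mathbf{u}-\mathcal{L}_a u(X)\|_{L^2(\pi_N)}^2=C_1\epsilon^{8\beta-2}\|R_u\|_{L^2(\pi)}^2$, the last line of the displayed computation.

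Third, I would send $\epsilon\to0$: since $\|R_u\|_{L^2(\pi)}^2$ is a finite constant independent of $\epsilon$ and $8\beta-2>0$ by the choice $\beta>1/4$, the right-hand side tends to $0$, and taking square roots yields \eqref{DML2consistency}. For the probability bookkeeping, at fixed $N$ the event on which all $N$ pointwise bounds hold simultaneously has probability at least $1-N^{-2}$ (a union bound over the $N$ points, absorbing the extra factor of $N$ into the Chernoff threshold at the cost of an absolute constant in $\eta$), and since $\sum_N N^{-2}<\infty$, Borel--Cantelli guarantees that the bound holds for all sufficiently large $N$ almost surely, which legitimizes the iterated limit.

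There is no genuine obstacle here; the only points requiring care are (i) that the iterated limit must be taken in the stated order — $N\to\infty$ first, then $\epsilon\to0$ — because for fixed small $\epsilon$ the finite-$N$ variance terms scale like $\epsilon^{-4-d/2}/N$ and blow up if one reverses the order; and (ii) that one must work in the sub-window $\beta\in(1/4,1/2)$ of the range $(0,1/2)$ provided by \cite{hsay:20}, so that the residual bias term $\epsilon^{8\beta-2}$ actually decays to $0$.
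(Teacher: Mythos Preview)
Your proposal is correct and follows essentially the same route as the paper's own argument: upgrade the $\mathcal{O}(\epsilon)$ bias to $R_u(\mathbf{x}_i)\epsilon^{4\beta-1}$ via Lemma~3.3 of \cite{hsay:20}, square and average the three-term pointwise bound, pass $N\to\infty$ by the law of large numbers so that the two $1/N$-weighted Monte-Carlo contributions vanish, and then send $\epsilon\to 0$. You are in fact slightly more careful than the paper in two places---explicitly restricting to $\beta\in(1/4,1/2)$ so that $8\beta-2>0$, and invoking Borel--Cantelli to make the ``with probability $1-N^{-2}$'' statement compatible with the iterated limit---both of which the paper leaves implicit.
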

This consistency estimate only holds when the limits are taken in the sequence as above.

\vspace{0.25cm}
\noindent {\bf The Parametrization Error of NNs.} Let us denote the best possible empirical loss as
$\mathcal{J}_{S,\epsilon}(\bm{\theta}_S)$, which depends only on the NN model and is independent of the optimization algorithm to solve the empirical loss minimization in \eqref{eqn:DMNN1}, since $\bm{\theta}_S$ is a global minimizer of the empirical loss. The estimation of $\mathcal{J}_{S,\epsilon}(\bm{\theta}_S)$ can be derived from deep network approximation theory in Theorem 1.1 of \cite{Shen3} and its corollary in \cite{du2022discovery}. The (nearly optimal) error bound in Theorem 1.1 of \cite{Shen3} focuses on approximating functions in $C^s([0,1]^n)$ and hence suffers from the curse of dimensionality; namely, the total number of parameters in the ReLU FNN scales exponentially in $n$ to achieve the same approximation accuracy. Fortunately, our PDE solution is only defined on a $d$-dimensional manifold embedded in $[0,1]^n$ with $d\ll n$. By taking advantage of the low-dimensional manifold, a corollary of Theorem 1.1 in \cite{Shen3} was proposed in \cite{du2022discovery} following the idea in \cite{Shen2} to conquer the curse of dimensionality. For completeness, we quote this corollary below.


\begin{lem}[Proposition 4.2 of \cite{du2022discovery}] \label{col:app}
Given $m,L\in\mathbb{N^+}$, $\mu\in(0,1)$, $\nu\in(0,1)$. Let $M\subset\mathbb{R}^n$ be a compact $d$-dimensional Riemannian submanifold having condition number $\tau_M^{-1}$, volume $V_M$, and geodesic covering regularity $G_M$, and define the $\mu$-neighborhood as
$M_\mu:=\{\mathbf{x}\in\mathbb{R}^n:\underset{\mathbf{y}\in M}{\inf}\|\mathbf{x}-\mathbf{y}\|_2\leq \mu\}$. If $u\in C^s(M_\mu)$ with $s\in\mathbb{N^+}$, then there exists a ReLU FNN $\phi$ with width $17s^{d_\nu+1}3^{d_\nu} d_\nu(m+2)\log_2(8 m)$ and depth $18s^2(L+2)\log_2(4L)+2d_\nu$ such that for any $\mathbf{x}\in M_\mu$,
 \begin{multline}\label{eqn:appd}
|\phi(\mathbf{x})-u(\mathbf{x})|\leq 8\|u\|_{C^s(M_\mu)}  \mu \Big((1-\nu)^{-1}\sqrt{n/d_\nu}+1\Big)+170 (s+1)^{d_\nu}8^s(1-\nu)^{-1}\|u\|_{C^s(M_\mu)}m^{-2s/d_\nu}L^{-2s/d_\nu},
\end{multline}
where $d_\nu:=\mathcal{O}\left(d\ln\left(n V_MG_M\tau_M^{-1}/\nu\right)/\nu^2\right)=\mathcal{O}\left(d\ln(n/\nu)/\nu^2\right)$ is an integer with $d< d_\nu< n$.
\end{lem}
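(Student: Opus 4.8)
The plan is to reduce the $n$-dimensional approximation problem to a $d_\nu$-dimensional one by projecting $M$ near-isometrically into $\mathbb{R}^{d_\nu}$, to invoke the width-and-depth ReLU approximation theorem of \cite{Shen3} in dimension $d_\nu$, and then to compose the resulting network back with the projection. In outline: (i) construct a linear map $A:\mathbb{R}^n\to\mathbb{R}^{d_\nu}$ that is a $(1\pm\nu)$ near-isometry on $M$ while having controlled operator norm on all of $\mathbb{R}^n$; (ii) transport $u$ to a $C^s$ function on the unit cube $[0,1]^{d_\nu}$; (iii) approximate that function by a ReLU FNN via \cite{Shen3}; (iv) pull the network back through $A$ and estimate the error at points of the tube $M_\mu$.

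\textbf{Steps 1--2 (project and transport).} Following the manifold Johnson--Lindenstrauss machinery of Baraniuk--Wakin, used in this context in \cite{Shen2}, one shows that for $d_\nu=\mathcal{O}\!\big(d\ln(nV_MG_M\tau_M^{-1}/\nu)/\nu^2\big)$ with $d<d_\nu<n$ there is a linear map $A:\mathbb{R}^n\to\mathbb{R}^{d_\nu}$ — an appropriately scaled sub-Gaussian random matrix, then fixed by the probabilistic method — with $(1-\nu)\|\mathbf{x}-\mathbf{y}\|\le\|A\mathbf{x}-A\mathbf{y}\|\le(1+\nu)\|\mathbf{x}-\mathbf{y}\|$ for all $\mathbf{x},\mathbf{y}\in M$ and, simultaneously, $\|A\|_{\mathrm{op}}\le c\sqrt{n/d_\nu}$ for an absolute constant $c$. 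The near-isometry makes $A|_M$ injective with $(A|_M)^{-1}$ Lipschitz of constant $(1-\nu)^{-1}$; the operator-norm bound is what controls the normal directions of the tube, about which the near-isometry says nothing. Deriving the stated $d_\nu$ uses a cover of $M$ by $\mathcal{O}(V_MG_M(\tau_M^{-1}/\nu)^d)$ balls of radius $\sim\nu\tau_M$, the vector JL bound on the finitely many pairwise differences and tangent directions at net points, and the condition number $\tau_M^{-1}$ to pass from the net to all of $M$. Since $A(M)$ is compact, after an affine change of coordinates $R$ it lies inside $[0,1]^{d_\nu}$; set $v:=u\circ(A|_M)^{-1}\circ R^{-1}$ on $R(A(M))$. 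By the chain rule its derivatives of order $\le s$ are bounded by $(1-\nu)^{-s}$ times those of $u$, so $v\in C^s(R(A(M)))$ with $\|v\|_{C^s}\le C_1(s,d_\nu)(1-\nu)^{-s}\|u\|_{C^s(M_\mu)}$, and a Whitney-type $C^s$-extension to the cube yields $E\in C^s([0,1]^{d_\nu})$ with $E=v$ on $R(A(M))$ and $\|E\|_{C^s([0,1]^{d_\nu})}\le C_2(s,d_\nu)(1-\nu)^{-s}\|u\|_{C^s(M_\mu)}$.

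\textbf{Steps 3--4 (approximate and estimate).} Applying Theorem~1.1 of \cite{Shen3} to $E$ on $[0,1]^{d_\nu}$ gives, for the prescribed $m,L\in\mathbb{N}^+$, a ReLU FNN $\widetilde\phi$ of width $17s^{d_\nu+1}3^{d_\nu}d_\nu(m+2)\log_2(8m)$ and depth $18s^2(L+2)\log_2(4L)$ with $\|\widetilde\phi-E\|_{L^\infty([0,1]^{d_\nu})}\le C_3(s,d_\nu)\|E\|_{C^s}m^{-2s/d_\nu}L^{-2s/d_\nu}$; inserting the bound on $\|E\|_{C^s}$ and collecting constants produces the prefactor $170(s+1)^{d_\nu}8^s(1-\nu)^{-1}$ (keeping one power of $(1-\nu)^{-1}$ explicit and absorbing the rest, together with the numerics, into $8^s$ and $(s+1)^{d_\nu}$, after a harmless rescaling of $\nu$ if needed). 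Put $\phi:=\widetilde\phi\circ R\circ A$; as $R\circ A$ is affine, realizing it while keeping all pre-activations nonnegative (coordinatewise $t=\sigma(t)-\sigma(-t)$) costs $2d_\nu$ extra layers and leaves the width order unchanged, so $\phi$ has the stated architecture with input dimension $n$. For $\mathbf{x}\in M_\mu$ and a nearest point $\mathbf{x}^*\in M$ (so $\|\mathbf{x}-\mathbf{x}^*\|\le\mu$),
\begin{align*}
|\phi(\mathbf{x})-u(\mathbf{x})| &\le |\widetilde\phi(RA\mathbf{x})-E(RA\mathbf{x})| + |E(RA\mathbf{x})-E(RA\mathbf{x}^*)| \\
&\quad + |E(RA\mathbf{x}^*)-u(\mathbf{x}^*)| + |u(\mathbf{x}^*)-u(\mathbf{x})| \;=:\; (\mathrm{I})+(\mathrm{II})+(\mathrm{III})+(\mathrm{IV}).
\end{align*}
Term $(\mathrm{I})$ is the network bound above; term $(\mathrm{III})=0$ since $RA\mathbf{x}^*\in R(A(M))$, where $E=v$, whence $E(RA\mathbf{x}^*)=u(\mathbf{x}^*)$; term $(\mathrm{IV})\le\mathrm{Lip}(u)\,\mu\le\|u\|_{C^s(M_\mu)}\,\mu$; and term $(\mathrm{II})\le\mathrm{Lip}(E)\,\|R\|_{\mathrm{op}}\|A\|_{\mathrm{op}}\,\mu\le(1-\nu)^{-1}\|u\|_{C^s(M_\mu)}\,c\sqrt{n/d_\nu}\,\mu$, using the inverse-Lipschitz constant of $A|_M$ for $\mathrm{Lip}(E)$ and Step~1 for $\|A\|_{\mathrm{op}}$. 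Summing $(\mathrm{II})+(\mathrm{III})+(\mathrm{IV})$ and absorbing $c$ and the rescaling constant into a factor $8$ gives $8\|u\|_{C^s(M_\mu)}\mu\big((1-\nu)^{-1}\sqrt{n/d_\nu}+1\big)$, which with $(\mathrm{I})$ is exactly \eqref{eqn:appd}.

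\textbf{Main obstacle.} The crux is Step~1: producing a single linear map that is a $(1\pm\nu)$-isometry on $M$ with the claimed $d_\nu=\mathcal{O}(d\ln(nV_MG_M\tau_M^{-1}/\nu)/\nu^2)$ — which needs the covering-number estimate in terms of $\tau_M, V_M, G_M$ and a tangent-space argument to upgrade a net guarantee to all of $M$ — while \emph{also} controlling $\|A\|_{\mathrm{op}}=\mathcal{O}(\sqrt{n/d_\nu})$, since the tube/normal directions are invisible to the near-isometry and only the operator norm governs the $\mu$-sized perturbation $\mathbf{x}-\mathbf{x}^*$. The secondary technical point is the quantitative $C^s$ Whitney extension over the non-convex compact set $A(M)$; once both are in hand, the remainder is assembling \cite{Shen3} with these ingredients and tracking constants.
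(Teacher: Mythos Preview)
The paper does not prove this lemma; it is quoted verbatim from \cite{du2021discovery} (see the sentence ``For completeness, we quote this corollary below''), and the paper only indicates that it is obtained ``following the idea in \cite{Shen2}'' as a corollary of Theorem~1.1 of \cite{Shen3}. Your proposal reconstructs precisely that route---a Baraniuk--Wakin/Johnson--Lindenstrauss near-isometric projection from $\mathbb{R}^n$ to $\mathbb{R}^{d_\nu}$ (the \cite{Shen2} ingredient), followed by a Whitney-type $C^s$ extension and an application of the width--depth ReLU approximation theorem of \cite{Shen3} in the reduced dimension, with the four-term error split to handle the tube $M_\mu$---so it is essentially the same approach as the source the paper cites.
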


In Lemma \ref{col:app}, ReLU FNNs are used while in practice the learnable linear combination of a few activation functions might boost the numerical performance of neural networks \cite{liang2021reproducing}. The extension of Lemma \ref{col:app} to the case of multiple kinds of activation functions is interesting future work. Lemma \ref{col:app} can provide a baseline characterization to the approximation capacity of FNNs when ReLU is one of the choices of activation functions for a learnable linear combination. Hence, the following error estimation is still true for NNs constructed with the learnable linear combination of a few activation functions.

Using the same argument as the extension lemma for smooth functions (see e.g., Lemma 2.26 in \cite{lee2013smooth}), one can extend any $u\in C^4(M)$ to $u\in C^4(M_{\mu_0})$, for any $C^4$ manifold $M \subseteq\mathbb{R}^n$ and a positive $\mu_0$. Therefore, by Lemma \ref{col:app}, there exists a ReLU FNN $\phi$ with width $\mathcal{O}(n\ln(n)m\log(m))$ and depth $\mathcal{O}(L\log(L)+\ln(n))$ such that
\[
|u(\mathbf{x})-\phi(\mathbf{x})|\le C_{M,d,\nu}  \|u\|_{C^3(M_{\mu_0})} \left(\mu \sqrt{\frac{n}{\ln n} } + nm^{-{8}/(d\ln(n))}L^{-{8}/(d\ln(n))}\right) \quad \text{for all }\mathbf{x}\in M,
\]
for any $\mu\in (0,\mu_0)$, where $C_{M,d,\nu}$ is a constant depending only on $M$, $d$, and $\nu$. Note that the curse of dimensionality has been lessened in the above approximation rate. Therefore, by taking $\mu\to 0$, we have the following error estimation
\begin{equation}\label{eqn:estuphi}
\|\mathbf{u}-\bm{\phi}\|_{L^2(\pi_N)} =  \mathcal{O}(m^{-8/(d\ln(n))}L^{-8/(d\ln(n))}),
\end{equation}
where the prefactor depends on $M$, $d$, $\nu$, $\|u\|_{C^3(M_{\mu_0})} $, and $n$.
By \eqref{discreteforwarderror} and \eqref{eqn:estuphi},
\begin{eqnarray}\label{eqn:delta}
\| \mathbf{L}_{a,\epsilon} \bm{\phi}_{S}-\mathbf{f}  \|_{L^2(\pi_N)}&\leq &\| \mathbf{L}_{a,\epsilon} \bm{\phi}-\mathbf{f}  \|_{L^2(\pi_N)}
\leq \| \mathbf{L}_{a,\epsilon} \bm{\phi}-\mathbf{L}_{a,\epsilon} \mathbf{u}  \|_{L^2(\pi_N)} + \| \mathbf{L}_{a,\epsilon} \mathbf{u} - \mathcal{L}_a u(X)\|_{L^2(\pi_N)}  \nonumber\\
&\leq & \| \mathbf{L}_{a,\epsilon}\|_2 \|\bm{\phi}-\mathbf{u}\|_{L^2(\pi_N)} +  \mathcal{O} \left(\epsilon,\Big(\frac{\log(N)}{N}\Big)^{\frac{1}{2}} \epsilon^{-2-\frac{d}{4}},\Big(\frac{\log(N)}{N}\Big)^{\frac{1}{2}}\epsilon^{-\frac{1}{2}-\frac{d}{4}}\right)\nonumber\\
&= &  \| \mathbf{L}_{a,\epsilon}\|_2 \mathcal{O}(m^{-8/(d\ln(n))}L^{-8/(d\ln(n))}) +  \mathcal{O} \left(\epsilon,\Big(\frac{\log(N)}{N}\Big)^{\frac{1}{2}} \epsilon^{-2-\frac{d}{4}},\Big(\frac{\log(N)}{N}\Big)^{\frac{1}{2}}\epsilon^{-\frac{1}{2}-\frac{d}{4}}\right),
\label{eqn:deltaest0}
\end{eqnarray}
where, for simplicity, we suppressed the functional  dependence on $\mathbf{x}$ in the second error term above.

Recall that $\mathbf{L}_{a,\epsilon} =-\mathbf{a}+ \mathbf{L}_{\epsilon}$, where $\mathbf{a}$ is a diagonal matrix with diagonal entries $0<a_{\min}\leq a(x_i)\leq a_{\max}$ and $\mathbf{L}_{\epsilon}\mathbf{1}=\mathbf{0}$. By definition, $\mathbf{L}_{\epsilon}$ is diagonally dominant with diagonal negative entries and non-diagonal positive entries. This means
\[
\|\mathbf{L}_{a,\epsilon}\|_\infty = \max_{1\leq j\leq N} \Big\{ a(\mathbf{x}_j)-\mathbf{L}_{\epsilon,jj}+ \sum_{i\neq j} \mathbf{L}_{\epsilon,ij}  \Big\} = \max_{1\leq j\leq N} \Big\{ a(\mathbf{x}_j)-2\mathbf{L}_{\epsilon,jj} \Big\} = a_{\max}+ C\epsilon^{-1},
\]
for some constant $C$ that depends on $\|\kappa\|_{\infty}$.
Therefore, $\|\mathbf{L}_{a,\epsilon}\|_{2}\leq N^{1/2}\|\mathbf{L}_{a,\epsilon}\|_{\infty} \leq C N^{1/2}\epsilon^{-1}$. Plugging this to \eqref{eqn:deltaest0}, we obtain
\begin{equation}\label{eqn:deltaest}
\| \mathbf{L}_{a,\epsilon} \bm{\phi}_{S}-\mathbf{f}  \|_{L^2(\pi_N)} = \mathcal{O} \left(\epsilon,\Big(\frac{\log(N)}{N}\Big)^{\frac{1}{2}} \epsilon^{-2-\frac{d}{4}},\Big(\frac{\log(N)}{N}\Big)^{\frac{1}{2}}\epsilon^{-\frac{1}{2}-\frac{d}{4}},N^{1/2}\epsilon^{-1} m^{-8/(d\ln(n))}L^{-8/(d\ln(n))}\right),
\end{equation}
as $\epsilon\to 0$ after $N\to\infty$, and $m$ or $L\to\infty$. This concludes the upper bound for the best possible empirical loss.

\begin{proof}[Proof of Theorem~\ref{thm:pconv}.] Now we derive the overall parametrization error considering DM and NN parametrization together to prove Theorem \ref{thm:pconv}. Since $\mathbf{L}_\epsilon$ is an unnormalized discrete Graph-Laplacian matrix that is semi-negative definite, for $a\geq a_{\min}>0$, one can see that,
\BEA
\langle \bm{\xi},\mathbf{L}_{a,\epsilon}\bm{\xi}\rangle_{L^2(\pi_N)} = \langle \bm{\xi},\mathbf{L}_\epsilon\bm{\xi}\rangle_{L^2(\pi_N)} - \langle\bm{\xi},\mathbf{a}\bm{\xi} \rangle_{L^2(\pi_N)}
\leq -a_{\min} \|\bm{\xi}\|^2_{L^2(\pi_N)},\notag
\EEA
for any $\bm{\xi} \in L^2(\pi_N)$. Letting $\bm{\xi}=\mathbf{u}-\bm{\phi}_S$, we have,
\begin{eqnarray}
a_{\min} \|\mathbf{u}-\bm{\phi}_{S}\|^2_{L^2(\pi_N)} &\leq& -\langle \mathbf{u}-\bm{\phi}_{S},\mathbf{L}_{a,\epsilon}(\mathbf{u}-\bm{\phi}_{S})\rangle_{L^2(\pi_N)} \notag \leq \| \mathbf{L}_{a,\epsilon}(\mathbf{u}-\bm{\phi}_{S})\|_{L^2(\pi_N)}\|\mathbf{u}- \bm{\phi}_{S}\|_{L^2(\pi_N)} \notag
\end{eqnarray}
and with probability higher than $1-N^{-2}$,
\BEA
\|\mathbf{u}-\bm{\phi}_{S}\|_{L^2(\pi_N)} &\leq& \frac{1}{a_{\min}} \|\mathbf{L}_{a,\epsilon} (\mathbf{u}-\bm{\phi}_{S})\|_{L^2(\pi_N)} \notag \\
 &\leq& \frac{1}{a_{\min}} \Big(\|\mathbf{L}_{a,\epsilon} \mathbf{u}-\mathbf{f}\|_{L^2(\pi_N)} + \| \mathbf{L}_{a,\epsilon} \bm{\phi}_{S}-\mathbf{f}  \|_{L^2(\pi_N)}\Big),\nonumber\\
&\leq &  \frac{1}{a_{\min}} \Big( \|\mathbf{L}_{a,\epsilon}\mathbf{u} - \mathcal{L}_au(X) \|^2_{L^2(\pi_N)} + \| \mathbf{L}_{a,\epsilon} \bm{\phi}_{S}-\mathbf{f}  \|_{L^2(\pi_N)}\Big),\nonumber\\
&=& \mathcal{O} \left(\epsilon,\Big(\frac{\log(N)}{N}\Big)^{\frac{1}{2}} \epsilon^{-2-\frac{d}{4}},\Big(\frac{\log(N)}{N}\Big)^{\frac{1}{2}}\epsilon^{-\frac{1}{2}-\frac{d}{4}},N^{1/2}\epsilon^{-1} m^{-8/(d\ln(n))}L^{-8/(d\ln(n))}\right),\label{consistency}
\EEA
as $\epsilon\to 0$ after $N\to\infty$, and $m$ or $L\to\infty$. Here, we have used \eqref{discreteforwarderror} and \eqref{eqn:deltaest} in the last equality above. Since the inequality in \eqref{eqn:appd} is valid uniformly, together with \eqref{DML2consistency}, we achieve the convergence of the limit 
and the proof is completed.
\end{proof}

\rev{We should remark that if $\mathbf{L}_{a,\epsilon} \bm{\phi}_{S} = \mathbf{f}$ is satisfied pointwise (e.g., solution induced by the classical finite-difference approximation), then the residual error in \eqref{eqn:deltaest} is zero and the proof is nothing but the classical Lax-equivalence argument of consistency in \eqref{discreteforwarderror} and stability implies convergence of the solution.}

\subsection{Optimization Error} In the parametrization error analysis, we have assumed that a global minimizer of the empirical loss minimization in \eqref{eqn:DMNN1} is achievable. In practice, a numerical optimization algorithm is used to solve this optimization problem and the numerical minimizer might not be equal to a global minimizer. Therefore, it is important to investigate the numerical convergence of an optimization algorithm to a global minimizer. The neural tangent kernel analysis originated in \cite{Arthur18} and further developed in \cite{finegrained,pmlr-v97-allen-zhu19a} has been proposed to analyze the global convergence of GD and SGD for the least-squares empirical loss function in \eqref{eqn:DMNN1}. In the situation of solving PDEs, the global convergence analysis of optimization algorithms is vastly open. In \cite{Luo2020}, it was shown that GD can identify a global minimizer of the least-squares optimization for solving second-order linear PDEs with two-layer NNs under the assumption of over-parametrization.
{ In this paper, we will extend this result in the context where the differential operator in the loss function is replaced by a discrete and approximate operator, $ \mathbf{L}_{a,\epsilon}$, applied to the NN solution (e.g., see \eqref{eqn:DMNN1} and \eqref{eqn:DMNN2}).
}
Furthermore, the activation function considered here is ReLU$^r$, which is more general than the activation function in \cite{Luo2020}.
The extension to deeper NNs follows the analysis in \cite{pmlr-v97-allen-zhu19a} and is left as future work.

To establish a general theorem for various applications, we consider the following emprical risk $\RS(\vtheta)$ with an over-parametrized two-layer NN optimized by GD:
 \begin{equation}\label{eqn:emloss4}
 \RS(\vtheta):=
    \frac{1}{2N} (\mA\phi(X;\vtheta)-{f}(X))^{\T}(\mA\phi(X;\vtheta)-{f}(X)),
\end{equation}
where $X:=\{\vx_i\}_{i=1}^N$ is a given set of samples in  $[0,1]^n$ {of an arbitrary distribution ($\pi$ in our specific application)};  $\mA$ is a given matrix of size $N\times N$; and the two-layer NN used here is constructed as $\phi(\vx;\vtheta)=\sum_{k=1}^\width a_k\sigma(\vw_k^\T\vx)$, where for $k\in[\width]:=\{1,\ldots,m\}$, $a_k\in\sR$, $\vw_k\in\sR^n$, $\vtheta=\mathrm{vec}\{a_k,\vw_k\}_{k=1}^\width$, and $\sigma(x)=\text{ReLU}^r(x)$, i.e., the $r$-th power of the ReLU activation function. Note that the matrix $\mA$ in our specific application is $\mathbf{L}_{a,\epsilon}$ for \eqref{eqn:DMNN1}; $\mA$ is a block-diagonal matrix for \eqref{eqn:DMNN2} with one block for the differential equation and another block for the boundary condition; $\mA$ is also a block-diagonal matrix when a regularization term $\|\bm{\phi}_{\bm{\theta}}\|_{L^2(\pi_N)}^2$ is applied to either \eqref{eqn:DMNN1} or \eqref{eqn:DMNN2}. In the remainder of this section, we use the notation $\mA$ since the result holds in general under some assumption discussed below. Without loss of generality, throughout our analysis, we also assume $|f|\leq 1$, since the PDE defined on a compact domain can be normalized.

Adopting the neuron tangent kernel point of view \cite{Arthur18}, in the case of a two-layer NN with an infinite width, the corresponding kernel $k^{(a)}$ for parameters in the last linear transform is a function from $M \times M$ to $\sR$ defined by $k^{(a)}(\vx,\vx') 
    := \Exp_{\vw\sim \mathcal{N}(\vzero,\mI_n)}g^{(a)}(\vw;\vx,\vx')$, where $g^{(a)}(\vw;\vx,\vx')
    := \left[\sigma(\vw^\T\vx)\right] \cdot\left[\sigma(\vw^\T\vx')\right]$. This kernel evaluated at $N\times N$ pairs of samples leads to an $N\times N$ Gram matrix $\mK^{(a)}$ with $K^{(a)}_{ij} = k^{(a)}(\vx_i,\vx_j)$. Our analysis requires the matrix $\mK^{(a)}$ to be positive definite, which has been verified for regression problems under mild conditions on random training data $X=\{\vx_i\}_{i=1}^N$ and can be generalized to our case. Hence, we assume this together with the non-singularity of $\mA$ as follows for simplicity.
\begin{assu}\label{assump..LambdaMin}
    The smallest eigenvalue of $\mK^{(a)}$, denoted as $\lambda_S$, and the smallest eigenvalue of $\mA \mA^{\T}$, denoted as $\lambda_{\mA}$, are positive.
\end{assu}

Let $\kappa_A$ denote the condition number of $A^{\T}A$. Our main result of the global convergence of GD for \eqref{eqn:emloss4} is summarized in Theorem \ref{thm:lcr} below with a proof in Appendix \ref{appendix_c}.

\begin{theo}[Global Convergence of GD: Two-Layer NNs] \label{thm:lcr}
Let $\vtheta(0) := \mathrm{vec}{\{a_k(0), \vw_k(0)\}}_{k=1}^\width$ at the GD initialization for solving \eqref{eqn:emloss4}, where $a_k(0) \sim \mathcal{N}(0, \gamma^2)$ and $\vw_k(0) \sim \mathcal{N}(\vzero, \mI_n)$ with any $\gamma\in(0,1)$. Let  $\lambda_S$ and $\lambda_A$ be positive constants in Assumption \ref{assump..LambdaMin}. For any $\delta\in(0,1)$, if
$m=\Omega(\kappa_A\mathrm{poly}(N,r,n,\frac{1}{\delta},\frac{1}{\lambda_S}))$, then with probability at least $1-\delta$ over the random initialization $\vtheta(0)$, we have, for all $t\geq 0$, $\RS(\vtheta(t))\leq\exp\left(-\frac{\width\lambda_S \lambda_{\mA}t}{N}\right)\RS(\vtheta(0))$.
\end{theo}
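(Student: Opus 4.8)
The plan is to collapse the entire statement to one analytic fact: that the smallest eigenvalue of the time-dependent Gram matrix $\mG^{(a)}(\vtheta(t))$ stays above $\tfrac12\lambda_S$ for every $t\ge 0$. Granting this, insert $\mG(\vtheta(t))\succeq\mG^{(a)}(\vtheta(t))\succeq\tfrac12\lambda_S\mI_N$ into the energy identity \eqref{eqn:boundS}; using that $\mA$ is nonsingular with $\lambda_{\min}(\mA^{\T}\mA)=\lambda_{\min}(\mA\mA^{\T})=\lambda_{\mA}$ (Assumption~\ref{assump..LambdaMin}(2)), one gets $\ve^{\T}\mA^{\T}\mA\,\mG^{(a)}(\vtheta)\,\mA^{\T}\mA\,\ve\ge \tfrac12\lambda_S\,(\mA\ve)^{\T}\mA\mA^{\T}(\mA\ve)\ge \tfrac12\lambda_S\lambda_{\mA}\,\ve^{\T}\mA^{\T}\mA\,\ve = N\lambda_S\lambda_{\mA}\,\RS(\vtheta)$, hence $\frac{\D}{\D t}\RS(\vtheta(t))\le -\frac{\width\lambda_S\lambda_{\mA}}{N}\RS(\vtheta(t))$, and Gr\"onwall's inequality yields the claimed exponential bound. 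This is the content of the auxiliary Lemma~\ref{lem:exp_RS}. So everything reduces to a uniform-in-$t$ lower bound for $\lambda_{\min}\!\big(\mG^{(a)}(\vtheta(t))\big)$, which I would establish through the initialization / evolution / stopping-time scheme outlined before the theorem.

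\emph{Initialization.} I would show $\lambda_{\min}\!\big(\mG^{(a)}(\vtheta^0)\big)\ge\tfrac34\lambda_S$ with high probability. The matrix $\mG^{(a)}(\vtheta^0)$ is the empirical average of $\width$ i.i.d.\ rank-one positive semidefinite matrices, one per neuron, with common mean $\mK^{(a)}$ by definition. Since $\vw_k^0\sim\mathcal{N}(\vzero,\mI_n)$ and $\vx_i\in[0,1]^n$, the scalars $\vw_k^{0\T}\vx_i$ are Gaussian with variance at most $n$, so $\sigma=\mathrm{ReLU}^r$ evaluated at them has sub-Weibull tails with parameters polynomial in $n$ and $r$; a truncation argument plus the matrix Bernstein inequality gives $\big\|\mG^{(a)}(\vtheta^0)-\mK^{(a)}\big\|_2\le\tfrac14\lambda_S$ once $\width\ge\mathrm{poly}(N,n,r,\lambda_S^{-1},\log(1/\delta))$, and Weyl's inequality completes this step (Lemma~\ref{lem:lambda_min}).

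\emph{Evolution and stopping time.} I would restrict to the interval $[0,t^*)$ with $t^*$ the exit time \eqref{eqn:ts} of $\vtheta(t)$ from the set $\mathcal{M}(\vtheta^0)$ in \eqref{eqn:Mtheta}. On this interval $\lambda_{\min}\!\big(\mG^{(a)}(\vtheta(t))\big)\ge\tfrac34\lambda_S-\tfrac14\lambda_S=\tfrac12\lambda_S$, so the Gr\"onwall step of paragraph one already applies and gives $\RS(\vtheta(t))\le e^{-\width\lambda_S\lambda_{\mA}t/N}\RS(\vtheta^0)$ for $t<t^*$. Combining this with $\RS(\vtheta)=\tfrac1{2N}\|\mA\ve\|_2^2\ge\tfrac{\lambda_{\mA}}{2N}\|\ve\|_2^2$ forces $\|\ve(t)\|_2\le\sqrt{2N\RS(\vtheta^0)/\lambda_{\mA}}\;e^{-\width\lambda_S\lambda_{\mA}t/(2N)}$, which is integrable with $\int_0^{t^*}\|\ve(t)\|_2\,\D t\le \tfrac{2N}{\width\lambda_S\lambda_{\mA}}\sqrt{2N\RS(\vtheta^0)/\lambda_{\mA}}$. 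Feeding this, together with the high-probability bounds $|a_k^0|\le C\gamma\sqrt{\log(\width/\delta)}$ and $\max_i|\vw_k^{0\T}\vx_i|\le C\sqrt{n\log(\width N/\delta)}$ (hence uniform control of $\sigma$ and $\sigma'$ on the relevant balls), into the componentwise gradient flow \eqref{componentwisegradflow} and using $\|\ve^{\T}\mA^{\T}\mA\|_2\le\|\mA\|_2^2\|\ve\|_2$, yields $\max_k\big(|a_k(t)-a_k^0|+\|\vw_k(t)-\vw_k^0\|_2\big)\le\kappa_A\,\mathrm{poly}(N,n,r,\lambda_S^{-1},\delta^{-1})\,\width^{-1/2}$ for all $t<t^*$; this is the over-parametrization bound (Lemma~\ref{prop:a_w}). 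Local Lipschitz continuity of $\sigma=\mathrm{ReLU}^r$ on a ball containing all the $\vw_k^{0\T}\vx_i$ then propagates this to $\big\|\mG^{(a)}(\vtheta(t))-\mG^{(a)}(\vtheta^0)\big\|_{\mathrm{F}}\le\kappa_A\,\mathrm{poly}(N,n,r,\lambda_S^{-1},\delta^{-1})\,\width^{-1/2}$, which is \emph{strictly} below $\tfrac14\lambda_S$ once $\width\ge\mathcal{O}\!\big(\kappa_A\,\mathrm{poly}(N,r,n,\delta^{-1},\lambda_S^{-1})\big)$. Since $t\mapsto\mG^{(a)}(\vtheta(t))$ is continuous, a finite $t^*$ would satisfy $\big\|\mG^{(a)}(\vtheta(t^*))-\mG^{(a)}(\vtheta^0)\big\|_{\mathrm{F}}=\tfrac14\lambda_S$ by definition of the exit time, contradicting the strict inequality; hence $t^*=\infty$ and the exponential decay holds for all $t\ge0$.

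The step I expect to be the main obstacle is the parameter-movement estimate in the evolution phase, and specifically the bookkeeping of the conditioning of $\mA$. Unlike the plain regression setting of \cite{Luo2020,Arthur18}, here $\mA=\mathbf{L}_{a,\epsilon}$ (or a block matrix built from it) is badly conditioned --- $\|\mA\|_2=\mathcal{O}(N^{1/2}\epsilon^{-1})$ while $\lambda_{\mA}$ may be tiny --- so $\|\ve\|_2$ is only reachable through $\RS/\lambda_{\mA}$ whereas the term driving \eqref{componentwisegradflow} scales with $\|\mA\|_2^2$; their ratio is what forces the width threshold to degrade by a factor of $\kappa_A$. The delicate point is that the exponential rate $\width\lambda_S\lambda_{\mA}/N$ appears in the \emph{denominator} of $\int_0^{t^*}\|\ve\|_2\,\D t$ while $\RS(\vtheta^0)$ can itself grow linearly in $\width$ at initialization (the network \eqref{eqn:NN} is an unnormalized sum of $\width$ neurons), so one must verify that the net exponent of $\width$ in the movement bound is still negative, i.e.\ that $\width^{-1/2}$ ultimately beats every $\mathrm{poly}(N,n,r,\kappa_A,\lambda_S^{-1},\delta^{-1})$ prefactor and absorbs the $\kappa_A$ power cleanly. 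A secondary technicality is that for $r=1$ the derivative $\sigma'$ is discontinuous, so the $\vw_k$-flow in \eqref{componentwisegradflow} should be read in an almost-everywhere / Clarke sense; this is harmless because only $\mG^{(a)}$, which depends on $\sigma$ and not $\sigma'$, is required to be bounded below.
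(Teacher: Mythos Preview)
Your proposal is correct and follows the same three-phase NTK scheme as the paper's proof in Appendix~B: initialization concentration (Lemma~\ref{lem:lambda_min}), exponential decay on $[0,t^*)$ via Gr\"onwall (Lemma~\ref{lem:exp_RS}), a parameter-movement bound (Lemma~\ref{prop:a_w}), and then $t^*=\infty$ by contradiction. The only cosmetic differences are that the paper uses an entrywise Chebyshev bound rather than matrix Bernstein for the initialization step (hence the crude $N^4$ factor in $m_1$), and bounds $\tfrac1N\sum_i|(\ve^{\T}\mA^{\T}\mA)_i|\le\sqrt{2\lambda_{\mA,N}\RS}$ directly in the movement estimate rather than passing through $\|\ve\|_2$ and $\|\mA\|_2^2$ as you do; your identification of the $\kappa_A$ bookkeeping and the potential $\width$-growth of $\RS(\vtheta^0)$ as the delicate points is exactly what the paper absorbs into the implicit requirements $m_2,m_3$ in \eqref{eqn:mmm}.
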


For the estimate of $R_S(\vtheta(0))$, see Lemma \ref{lem2}. In particular, if $\gamma=\mathcal{O}(\frac{1}{\sqrt{\width}(\log \width)^2})$, then $R_S(\vtheta(0))=\mathcal{O}(1)$. For two-layer NNs, Theorem \ref{thm:lcr} shows that, as long as the NN width $m= \Omega(\kappa_A\text{poly}(N,r,n,\frac{1}{\delta},\frac{1}{\lambda_S}))$, GD can identify a global minimizer of the empirical risk minimization in \eqref{eqn:emloss4}. For a quantitative description of $\mathcal{O}(\kappa_A\text{poly}(N,r,n,\frac{1}{\delta},\frac{1}{\lambda_S}))$, see \eqref{eqn:mmm} in Appendix \ref{appendix_c}. In the case of FNNs with $L$ layers, following the proof in \cite{pmlr-v97-allen-zhu19a}, one can show the global convergence of GD when $m=\Omega(\kappa_A\text{poly}(L,N,r,n,\frac{1}{\delta},\frac{1}{\lambda_S}))$, which is left as future work. 

\section{Numerical Examples}
\label{sec:num}
We numerically demonstrate the effectiveness and practicability of our proposed NN-based PDE solver on six examples, ranging from simple manifolds to unknown rough surfaces. In the numerical experiments on simple manifolds, we will compare the accuracy of NN and DM (or VBDM) solutions not only on training data but also on new data points. \rev{Since DM (or VBDM) only approximates the PDE solution evaluated on the training data, we will consider the classical Nystr\"om extension \cite{nystrom:30} as a means to interpolate the approximate PDE solutions on new data points.} We will show that the NN solution provides robustly more accurate generalization on new data points with lower complexity once the training is completed. To demonstrate the advantage of the manifold assumption, we demonstrate the ability of the NN-based solver to deal with PDE on manifolds of high co-dimension by validating it on a three-dimensional manifold embedded in $\mathbb{R}^{12}$. We will see that the NN-based solver can obtain a more accurate solution than DM. Finally, we will verify the performance on unknown (and rough) surfaces. In these numerical experiments, unlike the existing works reported in \cite{Shankar2014RBFFD,liang2012geometric,piret2012orthogonal}, we do not smooth the surfaces. To avoid singularities induced by the original data set in these rough surfaces, we resample the data points using the Marching Cubes algorithm \cite{lorensen1987marching} that is available through Meshlab \cite{meshlab}. Since the analytical solutions are unknown in this configuration, we will compare the NN solutions with the finite element method (FEM) solutions obtained from the FELICITY FEM MATLAB toolbox~\cite{walker2018felicity}. 

The remainder of this section is organized as follows. In Section~\ref{sec:num1}, we give an overview of the implementation detail of the neural-network regression. We will supplement this section with a list of detailed parameters used in each numerical example in Appendix~\ref{appendix_d}. In Section~\ref{sec:num2}, we compare the NN generalization with the Nystr\"om based interpolation method on simple manifolds. In Section~\ref{sec:num3}, we examine the accuracy of the approach on a manifold with high codimension. In Section~\ref{sec:num4}, we benchmark the numerical performances against the FEM solutions on unknown (and rough) surfaces. 

\subsection{Experimental design} \label{sec:num1} 

In each of the numerical examples below, we find NN regression solution to the elliptic PDE in \eqref{ellipticPDE} embedded in $\mathbb{R}^n$. Using the notation from previous section, given point cloud data
$X=\{\mathbf{x}_1,\ldots,\mathbf{x}_N \in \mathbb{R}^n\}$, we solve the PDE by optimizing the least-square problem given by,
\BEA
 \argmin_{\bm{\theta}} {\frac{1}{2}}\|(-\mathbf{a}+\mathbf{L}_\epsilon) \bm{\phi}_{\bm{\theta}} - \mathbf{f}\|_{L^2(\pi_N)}^2+\frac{\gamma}{2}\|\bm{\phi}_{\bm{\theta}}\|_{L^2(\pi_N)}^2 + {\frac{\lambda}{2}} \| \bm{\phi}^b_{\bm{\theta}}-\mathbf{g}\|_{L^2(\pi_{N_b})}^2.
 \label{eqn:loss4example1}
\EEA
Here, we add a regularization term with $\gamma>0$ that is needed to overcome the ill-posedness induced by $a=0$ in the no boundary case (as discussed right after \eqref{eqn:DMNN1}). For manifolds with boundary, we also set $\lambda>0$. The detailed choices of these parameters for each numerical experiments are reported in Appendix~\ref{appendix_d}.

\textbf{Devices and environments.} The experiments of DM are conducted on the workstation with 32$\times$ Intel(R) Xeon(R) CPU E5-2667 v4 @ 3.20GHz and 1 TB RAM and Matlab R2019a. The experiments of the NN-solver are conducted on the workstation with 16$\times$ Intel(R) Xeon(R) Gold 5122 CPU @ 3.60GHz and 93G RAM using Pytorch 1.0 and 1$\times$ Tesla V100.

\textbf{Implementation detail for neural-network regression.} We summarize notations and list the hyperparameter setting for each numerical example in Appendix~\ref{appendix_d}. In our implementation, we use a 3-hidden-layer FNN with the same width $m$ per hidden layer and the smooth Polynomial-Sine activation function in \cite{liang2021reproducing}. Polynomial-Sine is defined as 
$
    \alpha_1 \sin(\beta_1 x)+ \alpha_2 x + \alpha_3 x^2,
$
where the parameters are trainable and initialized by normal distributions $\beta_1,\alpha_1\sim\mathcal{N}(1,0.01), \alpha_2, \alpha_3\sim\mathcal{N}(0,0.01)$ respectively. As we shall demonstrate, our proposed NN solver is not sensitive to the numerical choices (e.g., activation functions, network types, and optimization algorithms) but a more advanced algorithm design may improve the accuracy and convergence. Particularly in the high codimensional example (Section~\ref{sec:num3}), we will compare the performance of ReLU FNNs and ReLU$^3$ FNNs trained by the gradient descent method and the performance of the Polynomial-Sine FNNs trained with the Adam optimizer~\cite{Kingma2014AdamAM}. Though the ReLU or ReLU$^3$ FNNs with the gradient descent method enjoy theoretical guarantees in our analysis, the Polynomial-Sine FNNs with Adam can generalize better, especially if the PDE solutions are in the class of sine or cosine functions. Numerically, we will employ the Polynomial-Sine activation and Adam optimizer in almost all of our examples and report it separately otherwise. In Adam, we use an initial learning rate of 0.01 for $T$ iterations. The learning rate follows cosine decay with the increasing training iterations, i.e., the learning rate decays by multiplying a factor $0.5( \cos(\frac{\pi t}{T}) + 1)$, where $t$ is the current iteration. We set the batch size to be the total number of training points unless otherwise stated. The NN results reported in this section are averaged over 5 independent experiments. 

\textbf{Time complexity of evaluating NN solution.} Since some numerical experiments will compare the accuracy of the solutions (generalization) on new data points, we document the time complexity of the NN-based function evaluation on a new datum. First, we note that the total of addition and multiplication of a linear transformation from $\mathbb{R}^A$ to $\mathbb{R}^B$ in a hidden layer of NN is $(A+(A-1)+1)\times B=2AB$, where $(A+(A-1)+1)$ represents the amount of computation required to compute a neuron, and the first $A$ is the amount of multiplication, $(A-1)$ is the amount of addition, and $1$ is from adding a bias. As introduced in Section~\ref{sec:fnn}, we use a $L-$layer FNN with a uniform width $m$. The input and output sizes are $n$ and $1$, respectively. Then the complexity of NN is $2nm+2(L-1)m^2+2m$, excluding the cost of evaluating the activation function on each layer. In our numerical examples, we use $m=\mathcal{O}(\sqrt{N})$ to facilitate training and fix $L=4$, so the time complexity of evaluating 1 datum becomes $\mathcal{O}(N)$ when $n<<N$.

\textbf{Implementation detail for DM and VBDM.} 
For efficient implementation, we use the $k-$nearest neighbor algorithm to avoid computing the distances of pair of points that are sufficiently large. The kernel bandwidth parameter, $\epsilon$, is tuned empirically for the fixed-bandwidth DM. For well-sampled data (data distributed equal angle intrinsically) in our first four examples, for large enough $k$, we found accurate estimate with $\epsilon\sim N^{-2/d}$, \rev{which is much faster than the theoretical error bound $\epsilon \sim N^{-\frac{2}{6+d}}$ obtained by balancing the first and third error bounds in Theorem~\ref{thm:pconv}}. For randomly sampled data, such as in our last two examples, we will use VBDM where $\epsilon$ is estimated by an auto-tuning method that was originally proposed in \cite{coifman2008TuningEpsilon}. Among the fixed-bandwidth DM applications, we found that we can directly use the auto-tuned $\epsilon$ for the semi-torus example. While the auto-tuned method may not produce the optimal parameter accurate estimation for fixed-bandwidth DM, this scheme is quite robust for the variable bandwidth diffusion maps (VBDM) as documented in \cite{bh:16vb}. We should also point out that in VBDM, we also need to prescribe an additional parameter $k_2< k$ to specify $\rho_0$ that is used to determine the variable bandwidth function $\rho$ in \eqref{VBDM}. We document the choice of nearest neighbor parameter $k$, the parameter $k_2$, and bandwidth parameter $\epsilon$ resulting either from hand-tuning or auto-tuning in the Tables in Appendix~\ref{appendix_d}. 

For a DM/VBDM solution, when the resulting problem is ill-posed (due to $\mathbf{a}=0$ and no boundary), the resulting matrix 
$\mathbf{L}_\epsilon$ is singular. In such a case, we report the numerical result based on applying the pseudo inverse of $\mathbf{L}_\epsilon$ to a right-hand-side vector. If the rank of the matrix $\mathbf{L}_\epsilon$ is $r$, then $\mathcal{O}(N^2r)$ complexity is required to stably compute a truncated SVD of $\mathbf{L}_\epsilon$ \cite{doi:10.1137/090771806}, which can be used to form a truncated SVD of the pseudo inverse of $\mathbf{L}_\epsilon$. Then it takes $\mathcal{O}(Nr)$ complexity to apply the pseudo inverse of $\mathbf{L}_\epsilon$ through its truncated SVD. The $\mathcal{O}(N^2r)$ complexity can be further reduced to $\mathcal{O}(Nr^2)$ at the price of not stable to entrywise outliers in $\mathbf{L}_\epsilon$ \cite{Engquist2009AFD,YANG2012148}.

\textbf{Nystr\"om-based interpolation.}
We consider a Nystr\"om-based interpolation \cite{nystrom:30} for comparing the accuracy of the estimates on new data points. For completeness of the discussion, we provide an algorithmic overview of the basic idea of Nystr\"om interpolation to extend symmetric eigenvectors to new data point and discuss its implemetation for DM. For a more rigorous discussion that covers VBDM see e.g. Section 5.2 of \cite{alexander2020operator}.  

Let $\mathbf{L}\in \mathbb{R}^{N\times N}$ be a symmetric positive definite matrix with eigenvalues $\lambda_1 \geq \lambda_2 \geq \ldots \geq \lambda_N>0$ and we denote the associated eigenvectors by $\{\mathbf{v}_j\}_{j=1,\ldots,N}$. For convenience of discussion, we define a function  $L:\mathbb{R}^n\times\mathbb{R}^n\to \mathbb{R}$ such that $L(\mathbf{x}_i,\mathbf{x}_j) = \mathbf{L}_{ij}$. Effectively, the eigenvalue problem
\[
\mathbf{L} \mathbf{v}_j = \lambda_j \mathbf{v}_j.
\]
is a discrete approximation on $\mathbf{x}\in X$ to a continuous eigenvalue problem of the following integral operator,
\[
\int_{M} L(\mathbf{x},\mathbf{y}) v_j(\mathbf{y})\,dV(\mathbf{y}) = \lambda_j v_j(\mathbf{x}),
\]
assuming that the data set $X$ is uniformly distributed for simplicity of discussion. Effectively, we approximate the function value $v_j(\mathbf{x}_i)$ with the $i$th component of the eigenvector $[\mathbf{v}_j]_i$. The main idea of Nystr\"om extension is to approximate the function value $v_j(\mathbf{x})$ on new data point $\mathbf{x}$ (not necessarily belong to the training data set $X$) using the equation above as follows,
\[
v_j(\mathbf{x}) = \frac{1}{\lambda_j}\int_{M} L(\mathbf{x},\mathbf{y}) v_j(\mathbf{y})\,dV(\mathbf{y}). 
\]
Numerically, this can be achieved by the following approximation,
\BEA
v_j(\mathbf{x}) \approx \frac{1}{\lambda_j}\sum_{i=1}^N L(\mathbf{x},\mathbf{x}_i) [\mathbf{v}_j]_i.\label{nystrom}
\EEA

For our problem, we are interested in applying this extension formula on an integral operator corresponding to the DM/VBDM asymptotic expansion that converges to the Laplace-Beltrami operator. In such a case, eigenfunctions of the Laplace-Beltrami operator can be approximated by eigenvectors of $\mathbf{L}^{ns} = \mathbf{D}^{-1}\mathbf{W}$, where $\mathbf{W}$ is defined as in \eqref{weightmatrix} with $\kappa=1$ and $Q=1$ for uniform data, and $\mathbf{D}$ is a diagonal matrix with diagonal entries $\mathbf{D}_{ii}= \sum_{j=1}^N\mathbf{W}_{ij}$. For convenience of the discussion below, we define the kernel function $W$ such that $\mathbf{W}_{ij} = W(\mathbf{x}_i,\mathbf{x}_j)$ as in \eqref{weightmatrix}.

We can now employ the Nystr\"om extension to interpolate eigenfunctions of $L$ associated with the symmetric discretization $\mathbf{L}=\mathbf{D}^{1/2}\mathbf{L}^{ns}\mathbf{D}^{-1/2} = \mathbf{D}^{-1/2} \mathbf{W}\mathbf{D}^{-1/2}$. Since $\bm{\phi}_j := \mathbf{D}^{-1/2} \mathbf{v}_j$ is an eigenvector of $\mathbf{L}^{ns}$ corresponding to the same eigenvalue $\lambda_j$ of $\mathbf{L}$, it is clear that given the function value $v_j(\mathbf{x})$ on a new datum $\bf{x}$, an extension attained by the formula in \eqref{nystrom} with $L(\mathbf{x},\mathbf{x}_i):= D(\mathbf{x})^{-1/2}W(\mathbf{x},\mathbf{x}_i)D(\mathbf{x}_i)^{-1/2}$ where $D(\mathbf{x}) = \sum_{j=1}^N W(\mathbf{x},\mathbf{x}_j)$, then the corresponding eigenfunction of the Laplace-Beltrami operator is approximated by $\phi_j(\mathbf{x}):=D(\mathbf{x})^{-1/2}v_j(\mathbf{x})$. Similar idea applies to the Nystr\"om extension corresponding to the VBDM asymptotic expansion with a slightly more complicated formula. For manifolds with homogeneous Dirichlet boundary conditions, we employ the extension using the truncated diffusion maps \cite{peoples2021spectral}.

Given the ability to approximate the function values  $\{\phi_j(\mathbf{x})\}_{j=1}^{J}$ for any new datum $\mathbf{x}$, we evaluate the DM solution for the PDE problem by interpolating the projected solution to the leading $J$ eigenfunctions of the Laplace-Beltrami operator as follows. Let $\mathbf{u}\in \mathbb{R}^N$ denotes the solution of the PDE attained by solving $(-\mathbf{a}+\mathbf{L}_\epsilon) \mathbf{u} = \mathbf{f}$ (either through direct or pseudo-inversion). Then we define the extension as,
\BEA
u(\mathbf{x}) \approx \sum_{j=1}^J \mathbf{u}^\top \bm{\phi}_j \phi_j(\mathbf{x}),\label{interpolate_u}
\EEA
where $J$ is chosen sufficiently large such that the residual error is comparable to the interpolation error. 

{\bf Time complexity of the Nystr\"om extension.} To employ the Nystr\"om extension, we need to first attain eigenvectors $\{\bm{\phi}_j\}_{j=1}^J$. Using the standard algorithm, one can attain the first $J$ eigenvectors with time complexity of $\mathcal{O}(Nk)+ \mathcal{O}(JkN)$, where $k$ is the number of nearest neighbors; the first complexity term corresponds to the arithmetic in the construction of $\mathbf{L}$; and the second complexity term corresponds to the arithmetic in solving the first $J$ eigenvalue problem. Subsequently, the arithmetic cost in attaining the coefficients in the expansion in \eqref{interpolate_u} is $\mathcal{O}(NJ)$ accounting for $J$ inner products of vectors in $\mathbb{R}^N$, which is negligible compared to the computations in solving the eigenvectors. Once these computations are performed, we can now evaluate any data point with additional time complexity due to the arithmetic $\mathcal{O}(NJ)$ per evaluation. This arithmetic cost includes the time complexity of $\mathcal{O}(N)$ in the extension formula in \eqref{nystrom}, not accounting for the cost of evaluating the kernel function $W$, and the use of $J$ eigenfunctions in \eqref{interpolate_u} is of $\mathcal{O}(JN)$.

\rev{Comparing the time complexities for evaluating the NN solution and Nystr\" om interpolation, notice that the latter has an additional order-$J$, which has to be empirically determined in practice. As we pointed out above, the additional order-$J$ appears in the estimation of leading $J$ eigenvectors and the \revv{Nystr\" om} extension. In the remainder of this section, we shall see that in some examples, setting $J$ to be of order 10 or 100 is sufficient to achieve the accuracy obtained by evaluating the NN solution. However, in some examples, we found that the accuracy of the NN solutions cannot be achieved by Nystr\" om extension of DM solutions even with $J=600$.}

\comment{
We numerically demonstrate the effectiveness and practicability of our proposed NN-based PDE solver on unknown manifolds. Our numerical examples show that an NN-based solver can achieve low error on the given points and good generalization on the unseen data points. Also, we include clock-time comparison to show that the proposed NN solver requires a much shorter clock-time compared with the DM-based solver (which directly solves the linear system in \eqref{eqn:dDM}) for large point cloud sizes. 

\sw{Four} numerical examples are used for demonstration. First, we test our method on a two-dimensional torus embedded in $\mathbb{R}^3$ to show that the NN-based solver can achieve comparable error to the DM-based solver. We will see that the NN-based solver can handle large data set while the DM-based solver fails. Second, we demonstrate the ability of the NN-based solver to deal with
manifolds of high co-dimension by validating it on a three-dimensional manifold embedded in $\mathbb{R}^{12}$. We will see that the NN-based solver can obtain a more accurate solution than DM. Next, our method is applied to the equation on a two-dimensional semi-torus to verify the performance of the NN-based solver on problems with Dirichlet boundary conditions. \sw{Finally, we test the NN solver on general ``unknown'' manifolds with or without boundaries and compare the estimates with the finite element method (FEM) solutions and variable bandwidth diffusion maps (VBDM) solutions.  }

\textbf{Devices and environments.} The experiments of DM are conducted on the workstation with 32$\times$ Intel(R) Xeon(R) CPU E5-2667 v4 @ 3.20GHz and 1 TB RAM and Matlab R2019a. The experiments of the NN-solver are conducted on the workstation with 16$\times$ Intel(R) Xeon(R) Gold 5122 CPU @ 3.60GHz and 93G RAM using Pytorch 1.0 and 1$\times$ Tesla V100.

\textbf{Implementation detail.} We summarize notations and list the hyperparameter setting for each numerical example in Appendix~\ref{appendix_d}. In our implementation, we use a 3-hidden-layer FNN with the same width $m$ per hidden layer and the smooth Polynomial-Sine activation function in \cite{liang2021reproducing}. Polynomial-Sine is defined as 
$
    \alpha_1 \sin(\beta_1 x)+ \alpha_2 x + \alpha_3 x^2,
$
where $\beta_1$, $\alpha_i, i=1,2,3$ are trainable parameters initialized by normal distribution $\mathcal{N}(1,0.01), \mathcal{N}(1,0.01), \mathcal{N}(0,0.01), \mathcal{N}(0,0.01)$ respectively. As we shall demonstrate, our proposed NN solver is not sensitive to the numerical choices (e.g., activation functions, network types, and optimization algorithms) but a more advanced algorithm design may improve the accuracy and convergence. Particularly in Example 2, we will compare the performance of ReLU FNNs and ReLU$^3$ FNNs trained by the gradient descent method and the performance of the Polynomial-Sine FNNs trained with the Adam optimizer~\cite{Kingma2014AdamAM}. Though the ReLU or ReLU$^3$ FNNs with the gradient descent method enjoy theoretical guarantees in our analysis, the Polynomial-Sine FNNs with Adam can generalize better. So, the Polynomial-Sine activation and Adam optimizer will be used in all of our examples. In Adam, we use an initial learning rate of 0.01 for $T$ iterations. The learning rate follows cosine decay with the increasing training iterations, i.e., the learning rate decays by multiplying a factor $0.5( \cos(\frac{\pi t}{T}) + 1)$, where $t$ is the current iteration. We set the batch size to be the total number of training points unless otherwise stated. The NN results reported in this section are averaged over 5 independent experiments. 
}

\subsection{Comparison between Nystr\"om extension and NN solution}\label{sec:num2}

In addition to the complexity comparisons reported in the previous section, we now compare the accuracy of the NN and the Nystr\"om solutions. We will consider three instructive examples. In the first example, our goal is to validate the Nystr\"om approach on a problem when there are no residuals of the projection in \eqref{interpolate_u}. In the second example, we want to elucidate the dependence of the Nystr\"om approach on $J$ when the PDE solution does not belong to the space span by the leading eigenfunctions of the Laplace-Beltrami operator. In this case, we will encounter competing errors from the eigenvector estimation using small training data set, the choice of $J$, and interpolating error, in the overall generalization error. In the final example, we will elucidate the effect of boundary conditions on the  Nystr\"om-based interpolation method. In all of these experiments, we found that NN produces the most robust and accurate generalization.

\comment{
In this subsection, we compare the performance of the NN method and the traditional method (i.e., Nystrom) on extending solution for new data points in terms of the computational complexity and the testing accuracy. \sw{We need some description of the Nystrom method here.} We use ``$\text{Nvar}$'' to denote the number of eigenfunctions to extend the solution using Nystrom. 

We compare the computational complexity of the NN method and the Nystrom method on evaluating $1$ new data point. \textbf{NN method}. The sum of addition and multiplication of a linear transformation from $\mathbb{R}^A$ to $\mathbb{R}^B$ in the hidden layer of NN is $(A+(A-1)+1)\times B=2AB$, where $(A+(A-1)+1)$ represents the amount of computation required to compute a neuron, and the first $A$ is the amount of multiplication, $(A-1)$ is the amount of addition, and $1$ is from adding a bias. As introduced in Section~\ref{sec:fnn}, we use a $L-$layer FNN with a uniform width $m$. The input and output size are $n$ and $1$, respectively. Then the complexity of NN is $2nm+2(L-1)m^2+2m$. In our numerical examples, we use $m=\mathcal{O}(\sqrt{N})$ to facilitate training and fix $L=4$, so the complexity becomes $\mathcal{O}(N)$ when $n<<N$. \textbf{Nystrom method}. \sw{For the Nystrom method, the complexity is XXX}
}

\subsubsection{2D Sphere}
In this first example, we solve the elliptic PDE in \eqref{ellipticPDE} for $a = 0, \kappa=1$ in a two-dimensional sphere embedded in $\mathbb{R}^3$ with an embedding function defined as,
\BEA
\iota(t_1,t_2) = \begin{pmatrix}\cos t_1\sin t_2 \\ \sin t_1\sin t_2 \\ \cos t_2\end{pmatrix} \in \mathbb{R}^3 \ \text{for} \ \ \begin{matrix} 0\leq t_1\leq 2\pi\\ 0\leq t_2\leq \pi\end{matrix}.
\label{eqn:2dsphere}
\EEA
For this simple geometry, one can verify that the Laplace-Beltrami operator has leading nontrivial eigenvalue \rev{of} -2 with spherical harmonics eigenfunctions, $\phi_1(\mathbf{x}) = x_1, \phi_2(\mathbf{x}) = x_2, \phi_3(\mathbf{x}) = x_3$ for $\mathbf{x}=(x_1,x_2,x_3)$. In this first test example, we choose the 
true solution of the PDE as $u(\mathbf{x}) = x_1+x_2$ for $\mathbf{x}=(x_1,x_2,x_3)$, such that the expansion in \eqref{interpolate_u} commits no residual error. Numerically, we will set $J=20$ to ensure that even if the estimated eigenvectors corresponding to eigenvalue -2 are not exactly $x_1, x_2, x_3$ (up to an rotation), the projection still commits no residual error. 

For this example, the matrix $\mathbf{L}_\epsilon$ is constructed by VBDM. We solve the PDE with the NN method by optimizing the least-square problem~\eqref{eqn:loss4example1} with $\lambda=0$ since this manifold has no boundary. The hyperparameter setting for the different $N$ is reported in Table~\ref{tab:2dsphereconfigs} in Appendix~\ref{appendix_d}. 
Table~\ref{tab:sphere} displays the errors (in $\ell_\infty$ norm) as functions of $N$ for the VBDM solution (which we refer to as the {\it training error}), and the testing errors of the Nystr\"om-based solution projected on $J=20$ eigenfunctions and the NN method. 
Here, the {\it testing errors} are computed against the true solution on $300^2$ Gaussian Legendre quadrature nodes (which do not belong to the training data set). 
We can see that both Nystrom and NN methods achieve similar testing errors, that are also comparable to the VBDM training error. 

\begin{table}[H]
  \centering
    \begin{tabular}{ccccccc}
    \toprule
          & N   & 625	& 1225	& 2500	& 5041	& 10000 \\
    \midrule
    VBDM  & training error &  0.0352 &	0.0254 &	0.0178 &	0.0126 &	0.0090 \\
    \midrule
    Nystrom ($J=20$)  & testing error &  0.0411 &	0.0268 &	0.0199 &	0.0128 &	0.0091  \\
    \midrule
    NN  & testing error &  0.0387 &	0.0271 &	0.0193 &	0.0128 &	0.0094 \\
    \bottomrule
    \end{tabular}%
    \caption{(2D sphere) Error comparison of the PDE solutions among VBDM, Nystrom extension of the VBDM solution, and NN solution. The testing error is the $\ell_\infty$ error between the estimated solution and the true solution on $300^2$ Gauss-Legendre quadrature points. }
  \label{tab:sphere}%
\end{table}%

\subsubsection{2D Torus} 
\label{sec:2dtorus}
In this section, we consider solving the elliptic PDE in \eqref{ellipticPDE} for $a=0$ on a two-dimensional torus embedded in $\mathbb{R}^3$ with an embedding function defined as,
\BEA
\iota(t_1,t_2) = \begin{pmatrix}(2+\cos t_1)\cos t_2 \\ (2+\cos t_1)\sin t_2 \\ \sin t_1\end{pmatrix} \in \mathbb{R}^3 \ \text{for} \ \ \begin{matrix} 0\leq t_1\leq 2\pi\\ 0\leq t_2\leq 2\pi\end{matrix}.
\label{eqn:torus}
\EEA
Here $t_1,t_2$ denote the intrinsic coordinates. For this numerical experiment, we set the diffusion coefficient 
$\kappa(t_1,t_2) = 1.1 + \sin^2t_1\cos^2t_2$,
the true solution  
$u(t_1,t_2) = (\sin2 t_2-2\cos2 t_2/(2+\cos t_1))\cos t_1$,
and analytically compute the right hand side function $f$. The point cloud data $X=\{\mathbf{x}_1,\ldots,\mathbf{x}_N\}$  are well sampled (equal angles in  intrinsic coordinates $(t_1, t_2)$). We solve the PDE by optimizing the least-square problem in \eqref{eqn:loss4example1} with $\lambda=0$. The hyperparameter setting for the different $N$ is reported in Table~\ref{tab:2dconfigs} in Appendix~\ref{appendix_d}.

\begin{figure}
\centering
\includegraphics[width=.5\textwidth]{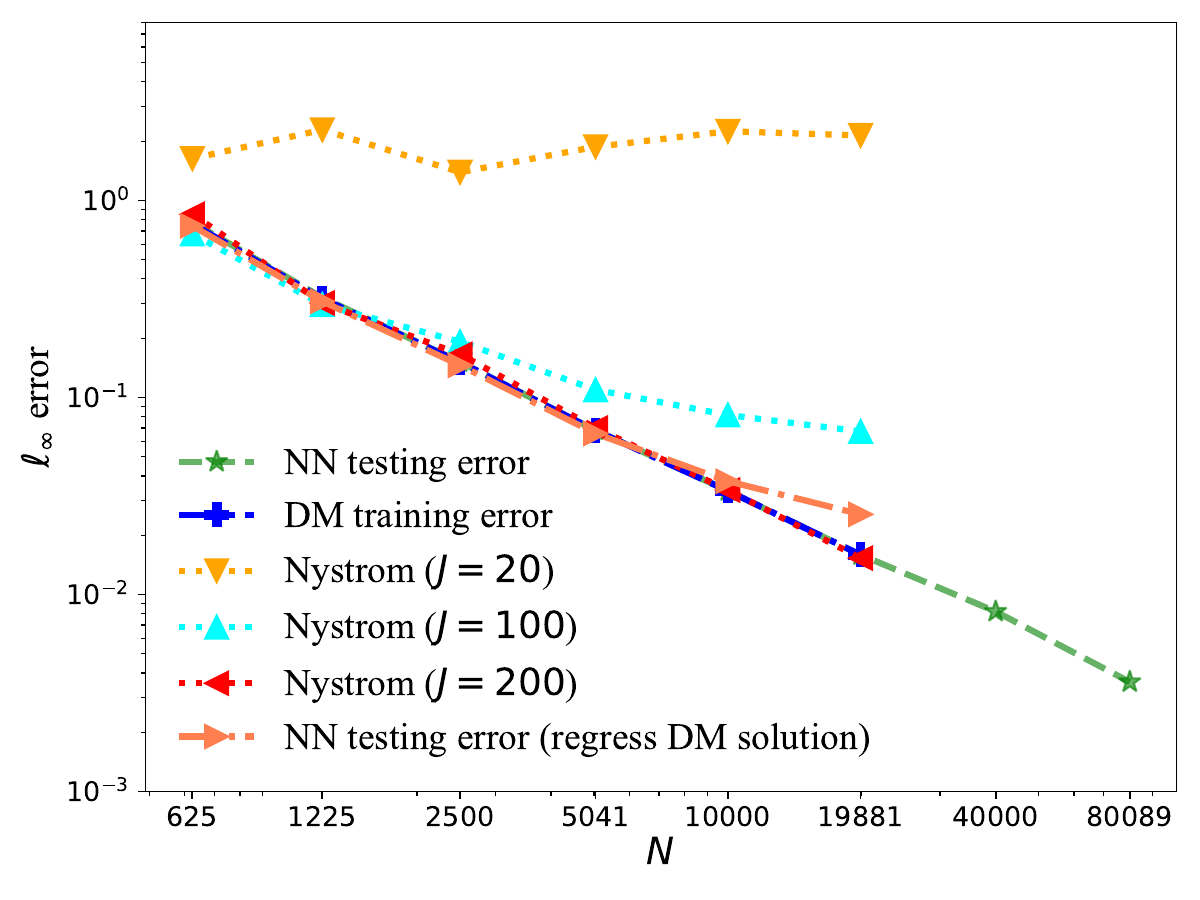}
\caption{{\bf 2D-torus:} \revv{The comparison of the $\ell_\infty$ errors as functions of the number of training points $N$ for DM and NN on a 2D torus embedded in $\mathbb{R}^3$. The results labeled as ``(regress DM solution)" represent conducting NN regression directly on the DM solution.}} 
\label{fig:2derror}
\end{figure}

 Figure~\ref{fig:2derror} shows the errors with the growth of training data $N$ on the 2D full-torus. As in the previous example, the testing error is defined with $\ell_\infty$ error on $300^2$ Gauss-Legendre quadrature points that are not in the training data set. One can see that the NN testing error is consistent with the DM training error, indicating a good generalization of new data points. We note that the accuracy of Nystr\"om method heavily depends on the number of eigenfunctions used. When the number of eigenfunctions is small ($J=20$), the errors on the testing points are large for all $N$. When the number of eigenfunctions is increased to $J=100$, Nystrom has a good generalization when $N$ is small ($N\leq 1225$). For $N\geq 2500$, the testing errors are smaller compared with $J=20$ but they are still large compared with the DM training error. If we continue increasing $J$ to 200, the testing errors of the large $N$ ($N\geq 1225$) become consistent with the DM error. These results suggest that more eigenfunctions are able to suppress the residual error induced by the finite projection in \eqref{interpolate_u}. At the same time, when $J$ is increased from 100 to 200, the error corresponds to $N=625$ increases as well. This is not surprising since the estimation of eigenvectors corresponding to large eigenvalues are not so accurate when the number of data points is small. From these results, we conclude that while the Nystr\"om method can be competitive with the NN solution, it depends crucially on the number of modes in \eqref{interpolate_u}, which ultimately also depends on the number of training data.
 
 In Figure~\ref{fig:2derror}, we also show the testing error with NN solution for larger $N$, where the error continues to decrease with rate $N^{-1}$, which confirms with the choice of $\epsilon\sim N^{-1}$ for this well-sampled data (see Table~\ref{tab:2dconfigs}). To obtain this solution with $N=80089$, mini-batch training is used to minimize \eqref{eqn:loss4example1} since $\mathbf{L}_\epsilon \in \mathbb{R}^{80089\times 80089}$ is too large to compute in GPU directly. In our implementation, at each time, $8000$ rows from $\mathbf{L}_\epsilon$ are randomly sampled and the submatrix $\mathbf{L}_\text{sub}$ of size $8000\times 80089$ substitutes $\mathbf{L}_\epsilon$ in the loss~\eqref{eqn:loss4example1}. Then the network parameters $\bm{\theta}$ are updated for 100 iterations at each time. We repeat this procedure for 80 times. As for the DM, we do not pursue the results since the computational cost in solving the linear problem
 $\mathbf{L}_\epsilon \mathbf{u} = \mathbf{f}$
with singular $\mathbf{L}_\epsilon$ is very expensive as it involves pseudo-inversion algorithm. 
 
\rev{To be more specific, we report the clock-time and memory cost for training DM and NN solutions in Figure~\ref{fig:timemem}. The clock-time for the pseudo-inverse of DM grows rapidly with the increasing $N$ while that of NN remains much smaller when $N< 80089$. The rapid increase of NN clock-time for the case $N=80089$ is attributed to the time consuming of the retrievals of the submatrix $\mathbf{L}_\text{sub}$ from $\mathbf{L}_\epsilon$. Memory in Figure~\ref{fig:timemem} refers to the sum of GPU and RAM. When conducting pseudo-inverse in the DM, the Matlab occupies RAM to load the full matrix and do the computation. The NN-based solver utilizes RAM to load the sparse matrix and uses GPU memory for model training. From Figure~\ref{fig:timemem}, the NN solver has larger memory consumption than DM when $N$ is small but the NN solver uses less memory than DM when $N$ is large. Since we set the batch size to be the total number of training points for all cases except $N=80089$, the memory consumption will significantly decrease if a mini-batch is used in Adam.}

\revv{To obtain the NN solution to the manifold PDE, one could consider an \textit{alternative method} consisting of two steps: First, use an inversion to solve the linear system, and then train a NN to regress the DM solution. While the alternative method may produce testing error similar to the DM method and the proposed NN method, we do not pursue it because the alternative method will cause a much higher cost even for small $N$ due to its two-step approach. Figure~\ref{fig:timemem} supplement the running time and memory usage of the NN training portion for the alternative method (labeled as ``(regress DM solution)"). One can see that NN regression on the DM solutions does not significantly reduce running time or memory cost. Furthermore, Figure~\ref{fig:2derror} provides testing error results for the alternative method. The testing error of the alternative method is slightly higher than the others, likely because we use the same training configuration for the alternative method. A better training configuration may maintain a testing error close to the DM training error. The NN training portion in the alternative method has comparable training time to the NN method but note that the alternative method includes two sequential steps. This means the total training time (DM solution time + NN training time to regress the DM solution) is much higher. Also, the memory requirement for the alternative method is the maximum of the NN regression memory and DM memory, which is more costly in terms of memory.}

\begin{figure}[htbp]
\centering     
\subfigure[clock-time]{\label{fig:clocktime}\includegraphics[width=.41\textwidth]{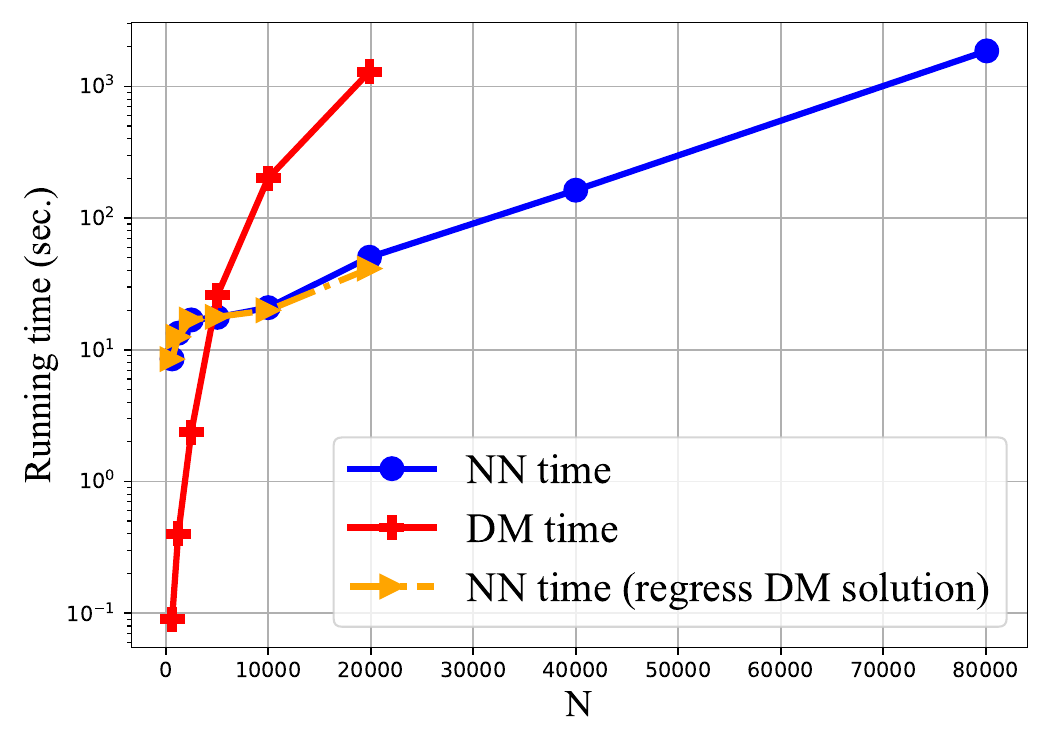}}
\subfigure[memory]{\label{fig:memory}\includegraphics[width=.41\textwidth]{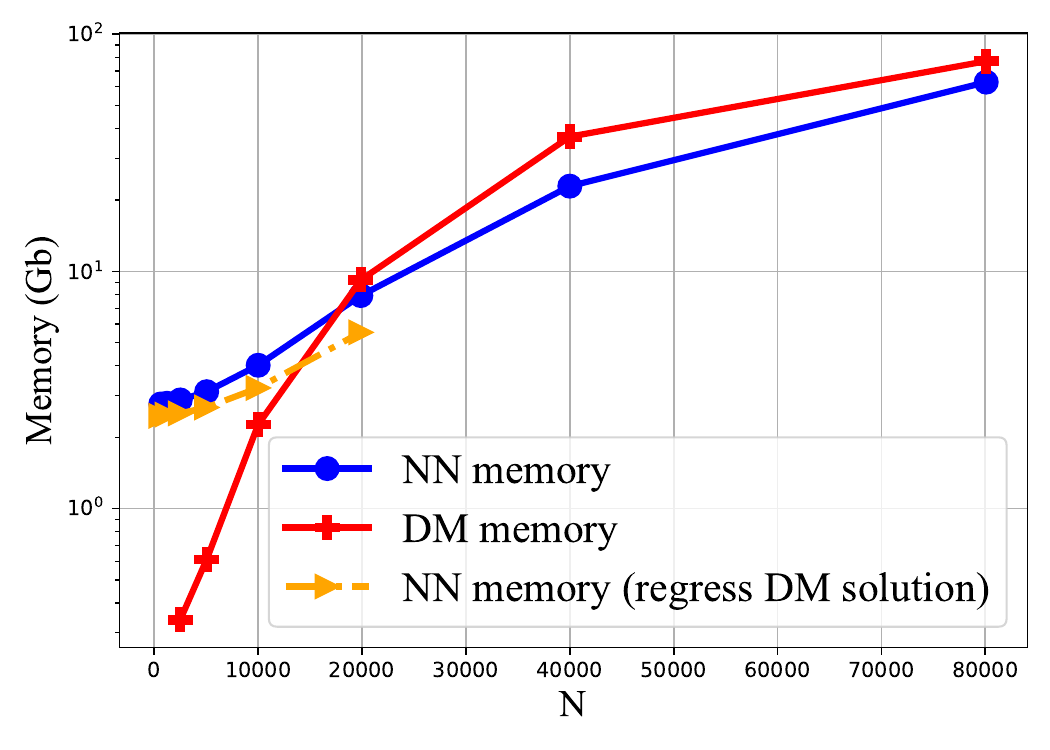}}
\vspace{-0.2cm}
\caption{\revv{The comparison of clock-time, memory for DM and NN solvers on 2D torus embedded in $\mathbb{R}^3$. In the DM case, we also report the require memory, estimated by the system process-manager, to solve the problem for large $N$, which we did not pursue due to the excessive wall-clock time. The results labeled as ``(regress DM solution)" represent conducting NN regression directly on the DM solution.}}
\label{fig:timemem}
\end{figure}

\comment{
\begin{table}[htbp]
  \centering
  \begin{adjustbox}{width=\textwidth,center}
    \begin{tabular}{cp{9.165em}cccccccc}
    \toprule
          & N  & 625   & 1225  & 2500  & 5041  & 10000 & 19881 & 40000 & 80089 \\
    \midrule
    DM & inverse error & 0.7606  & 0.3175  & 0.1511  & 0.0680  & 0.0336  & 0.0159  & N/A   & N/A \\
    \midrule
    Nystrom ($\text{Nvar}=20$) & testing error & 1.6312 &	2.2833	& 1.3910	& 1.8716	& 2.2438
  &  2.1381 & N/A   & N/A \\
    Nystrom ($\text{Nvar}=100$) & testing error & 0.6765 &	0.2960 & 0.1917 &	0.1094	& 0.0815 & 0.0673  &  N/A  &  N/A \\
    Nystrom ($\text{Nvar}=200$) & testing error & 0.8510 &	0.3017	&0.1656& 	0.0700&	0.0339
   &  0.0153  &  N/A  &  N/A \\
    \midrule
    NN & testing error & 0.7764  & 0.3213   & 0.1516   & 0.0682   & 0.0336   & 0.0159   & 0.0082   & 0.0036  \\
    \bottomrule
    \end{tabular}%
	\end{adjustbox}
  \caption{(2D Torus) Error comparison of the PDE solutions among DM, Nystrom extension of the DM solution, and NN. The testing error is the $\ell_\infty$ error between the estimated solution and the true solution on $300^2$ Gauss-Legendre quadrature points. N/A for DM indicates that the result is not computable. Since the DM solution is not available due to computational issue of pseudo-inverse, we can not extend the solution on new data points with Nystrom. }
  \label{tab:2dtorus_vs_nystrom}%
\end{table}%
}

\subsubsection{Semi-torus} 
We consider solving the PDE in \eqref{ellipticPDE} with $a=0$ on a two-dimensional semi-torus $M$ with a Dirichlet boundary condition. Here, the embedding function is the same as in \eqref{eqn:torus}
except that the range of $t_2$ is changed to $0\leq t_2\leq\pi$. While $\kappa$ is chosen to be the same as before, we set $u(t_1,t_2) =\cos(t_1)\sin(2t_2)$ to be the solution with homogeneous Dirichlet boundary condition on $t_2=0,\pi$. We obtain the NN-based solution by solving the optimization problem in \eqref{eqn:loss4example1} with $\lambda=5$ and no regularization $\gamma=0$. The other hyperparameter setting can be found in Table~\ref{tab:2dconfigsbc} in Appendix~\ref{appendix_d}. 

\begin{figure}
\centering
\includegraphics[width=.5\textwidth]{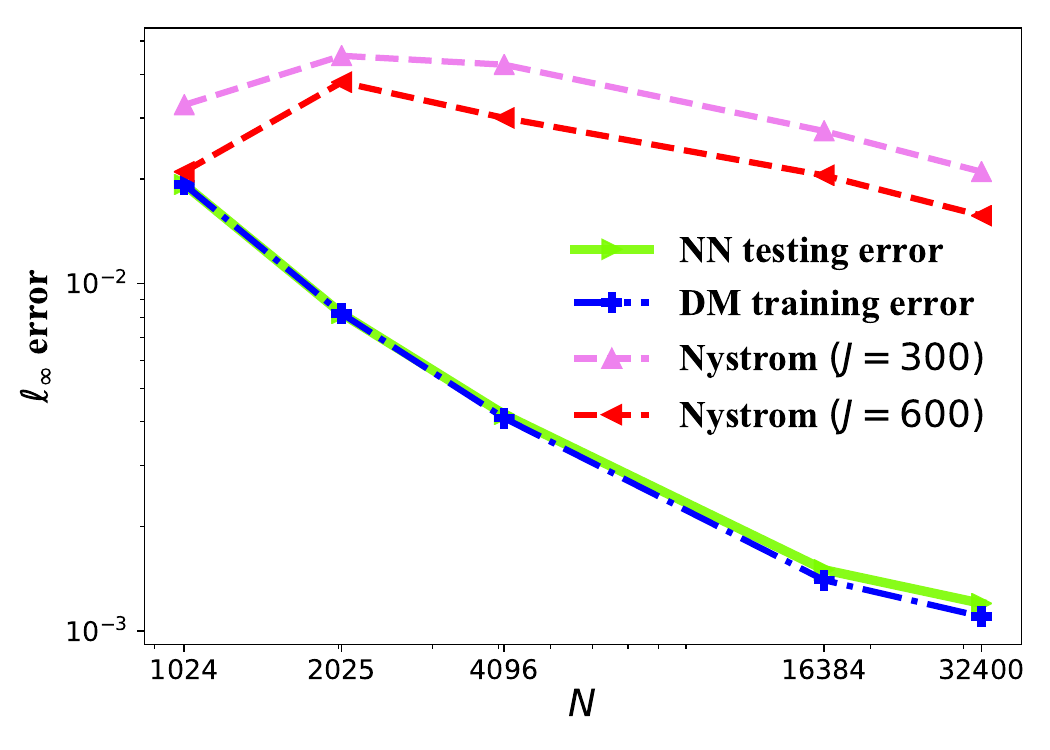}
\caption{{\bf Semi-torus:} The comparison of the $\ell_\infty$ errors as functions of the number of training points $N$ for DM and NN on a semitorus embedded in $\mathbb{R}^3$.} 
\label{fig:semitoruserror}
\end{figure}

 Figure~\ref{fig:semitoruserror} shows the errors as functions of training data $N$ on the semi-torus. In this example, the testing error is computed with $\ell_\infty$ norm over $10,000$ uniformly sampled data. Notice that the error of the Nystr\"om is quite large even for $J=600$. In this case, this large error is attributed to the slow convergence of the estimation of eigenvectors near the boundary (see \cite{peoples2021spectral} for the detailed rate). To highlight this issue, we show the error comparison of the Nystrom (J=600) and NN solutions for $N=32400$ in Figure~\ref{fig:semitorus_nystrom}. We can see that the error of Nystrom concentrates near the boundary while the boundary error of the NN solution is small. 
\comment{
\begin{table}
  \centering
  \begin{adjustbox}{width=0.8\textwidth,center}
    \begin{tabular}{clccccc}
    \toprule
          & $N$ & 1024  & 2025  & 4096  & 16384 & 32400 \\
    \midrule
    GPDM & inverse error &0.0193 &	0.0082 &	0.0041 &	0.0014 &	0.0011 
 \\
    \midrule
    Nystrom ($\text{Nvar}=300$) & testing error & 0.0327&	0.0453&	0.0427	&0.0275	&0.0210
 \\
    Nystrom ($\text{Nvar}=600$) & testing error & 0.0210 &	0.0380	& 0.0300 &	0.0205 &	0.0157 
 \\
    \midrule
    
    NN & testing error & 0.0193	& 0.0082& 	0.0042& 	0.0015& 	0.0012
  \\
    \bottomrule
    \end{tabular}%
	\end{adjustbox}
  \caption{(Semi-torus) Error comparison of the PDE solutions among GPDM, the Nystrom extension of the GPDM solution, and NN. The testing error is the $\ell_\infty$ error between the estimated solution and the true solution on $100^2$ uniformly sampled points. }
  \label{tab:semitorus_vs_nystrom}%
\end{table}%
}
\begin{figure}[htbp]
\centering     
\subfigure[True solution]{\label{fig:semitrue}\includegraphics[width=.31\textwidth]{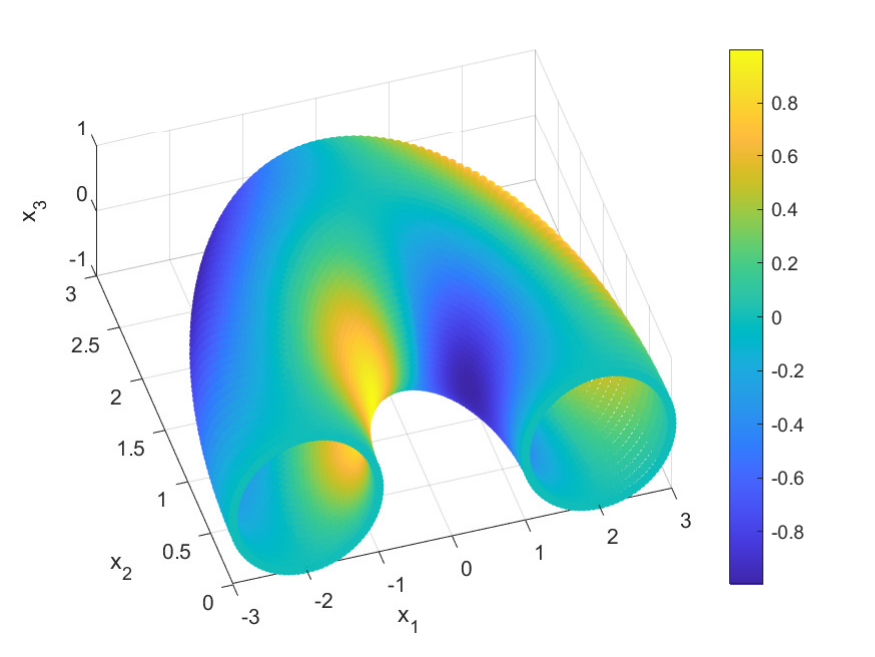}}
\subfigure[Difference between Nystrom \& truth]{\label{fig:seminystrom}\includegraphics[width=.31\textwidth]{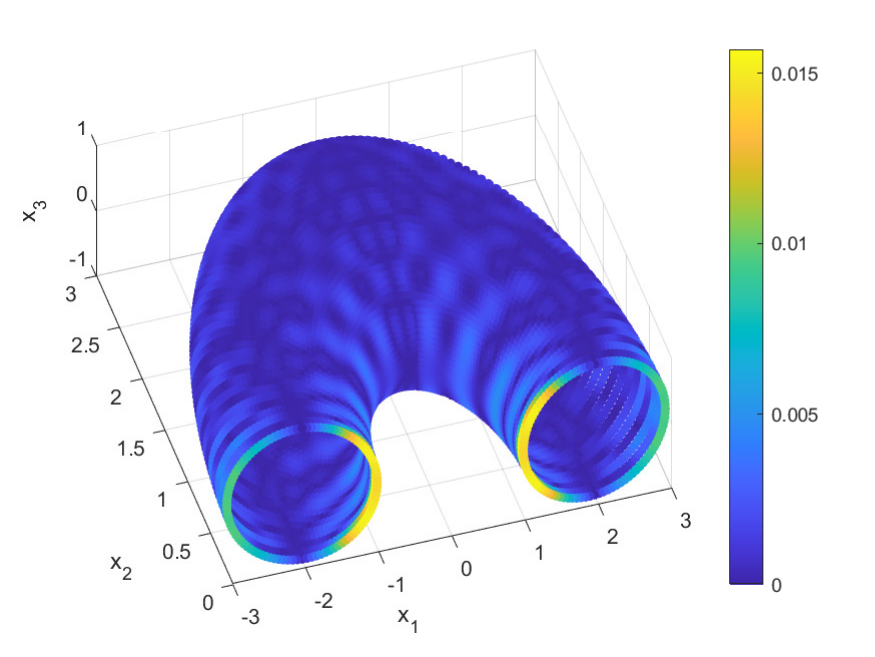}}
\subfigure[Difference between NN \& truth]{\label{fig:seminn}\includegraphics[width=.31\textwidth]{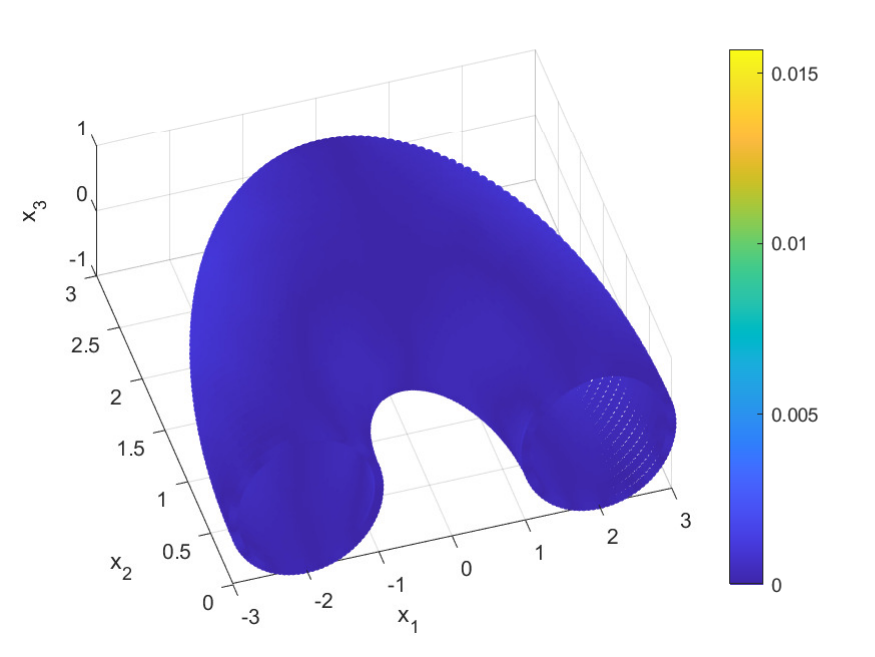}}
\caption{Comparison of the PDE solutions among the Nystrom extension ($J=600$) and NN on the semi-torus example ($N=32400$). (a) True solution. (b) Absolute difference between Nystrom and true solutions. (c) Absolute difference NN and true solutions. }
\label{fig:semitorus_nystrom}
\end{figure}

\comment{
\subsection{Example 1: 2D Torus}
\label{sec:2dtorus}

The hyperparameter setting for the different $N$ is summarized in Table~\ref{tab:2dconfigs} in Appendix~\ref{appendix_d}. To facilitate the training of FNN, we increase the NN width $m$ and the number of iterations $T$ as the number of training points $N$ grows. We apply Adam optimizer to minimize \eqref{eqn:loss4example1}. In particular, when $N=80089$, mini-batch training is used to minimize \eqref{eqn:loss4example1} since $\mathbf{L}_\epsilon \in \mathbb{R}^{80089\times 80089}$ is too large to compute in GPU directly. In our implementation, at each time, $8000$ rows from $\mathbf{L}_\epsilon$ are randomly sampled and the submatrix $\mathbf{L}_\text{sub}$ of size $8000\times 80089$ substitutes $\mathbf{L}_\epsilon$ in the loss~\eqref{eqn:loss4example1}. Then the network parameters $\bm{\theta}$ are updated for 100 iterations at each time. We repeat this procedure for 80 times.

Figure~\ref{fig:2derror} shows various errors of DM and NN solutions as functions of training data size, $N$. We report the more precise numerical value of the errors in Table~\ref{tab:2derror} in Appendix~\ref{appendix_d}. Since DM-based solver can only obtain the approximate solution at the point cloud data, we report the \emph{forward error}, $\|\mathbf{L}_\epsilon \mathbf{u} - \mathbf{f}\|_{\infty}$, where $\mathbf{u}=u(X)$, which quantifies the accuracy of the approximation of the differential operator, and the \emph{inverse error}, $\|\mathbf{u}_\epsilon - \mathbf{u}\|_{\infty}$, where $\mathbf{u}_\epsilon$ is the solution obtained by taking a pseudo-inverse of \eqref{eqn:dDM}, which quantifies the accuracy of the approximate solution.
As for the NN solution, since the solution is of the form $\phi(\cdot,\bm{\theta}_N)$, where $\bm{\theta}_N$ denotes the numerically obtained minimizer, we show the \emph{testing error}, which is defined as the $\ell_\infty$ error on $300^2$ Gauss-Legendre quadrature points that are not in the training data set.
In Table~\ref{tab:2derror} in Appendix~\ref{appendix_d}, we also report the 
the \emph{training error} from NN-based solver, which is the $\ell_\infty$ error on training data points. From Figure~\ref{fig:2derror}, one can see both DM and NN provide convergent solutions. However, when $N$ is large enough, e.g., over $40000$, the Matlab software fails to compute the pseudo-inverse. Besides, from Figure~\ref{fig:2derror}, we see that the NN solution produces a good generalization on the unseen points.

We compare the clock-time and memory cost for DM and NN in Figure~\ref{fig:timemem}. The clock-time for pseudo-inverse of DM grows rapidly with the increasing $N$ while that of NN remains much smaller when $N< 80089$. The rapid increase of NN clock-time for the case $N=80089$ is attributed to the time consuming of the retrievals of the submatrix $\mathbf{L}_\text{sub}$ from $\mathbf{L}_\epsilon$. Memory in Figure~\ref{fig:memory} refers to the sum of GPU and RAM memory. When conducting pseudo-inverse of DM, the Matlab occupies RAM to load the full matrix and do the computation. The NN-based solver utilizes RAM to load the sparse matrix and uses GPU memory for model training. From Figure~\ref{fig:memory}, 
the NN solver has larger memory consumption than DM when $N$ is small but the NN solver uses less memory than DM when $N$ is large. Since we set the batch size to be the total number of training points for all cases except $N=80089$, the memory consumption will significantly decrease if a mini-batch is used in Adam. 
}

\comment{
\begin{figure}
\centering
\includegraphics[width=.5\textwidth]{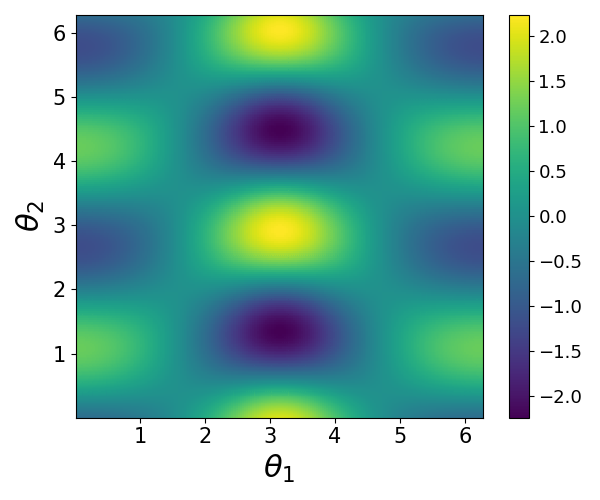}
\caption{True solution $u$ on the intrinsic coordinates $(\theta_1, \theta_2)$.}
\label{fig:2dtrue}
\end{figure}

\begin{figure}[htbp]
\centering     
\subfigure[clock-time]{\label{fig:clocktime}\includegraphics[width=.41\textwidth]{time.pdf}}
\subfigure[memory]{\label{fig:memory}\includegraphics[width=.41\textwidth]{memory.pdf}}
\vspace{-0.2cm}
\caption{The comparison of clock-time, memory for DM and NN solvers on 2D torus embedded in $\mathbb{R}^3$. In the DM case, we also report the require memory, estimated by the system process-manager, to solve the problem for large $N$, which we did not pursue due to the excessive wall-clock time.}
\label{fig:timemem}
\end{figure}
}
    

\textbf{Section summary:} Based on the numerical comparisons in the above three examples, we found that the NN solution produces more robust and accurate generalizations compared to the Nystr\"om-based interpolation. The  Nystr\"om-based method requires a large number of eigenfunctions to suppress the residual for general PDE solution. This requirement translates to the need for a large number of training data for accurate estimation of eigenvectors corresponding to large eigenvalues of the Laplace-Beltrami operator, which also means more computational power is needed in solving the corresponding eigenvalue problems. Based on this finding, we will not present the Nystr\"om-based interpolation in the next three examples. We will use the DM (or VBDM) and NN training errors as benchmarks for the NN testing error.

\subsection{A 3D Manifold of High Co-Dimension}\label{sec:num3}
In this section, we demonstrate the performance of the NN regression solution on a simple manifold with high co-dimension. For this example, we will also compare results from the Polynomial-Sine to other simpler activation functions. Specifically, we consider the elliptic PDE in \eqref{ellipticPDE} with $a = 0, \kappa=1$ on 
a closed manifold $M$, embedded by $\iota:M\hookrightarrow \mathbb{R}^{12}$, defined through the following embedding function, \[\iota(t_1,t_2,t_3):=\big(\sin(t_1),\cos(t_1),\sin(2t_1),\cos(2t_1), \sin(t_2),\cos(t_2),\sin(2t_2),\cos(2t_2),\sin(t_3),\cos(t_3),\sin(2t_3),\cos(2t_3)\big),\] for $t_1,t_2,t_3\in[0,2\pi)$. We manufacture the right hand data $f$ by setting the true solution to be
$u(t_1,t_2,t_3)=\sin t_{1}\cos t_{2}\sin 2t_{3}.$ The training points $\{\mathbf{x}_1,\ldots,\mathbf{x}_N\}$ are well sampled (equal angles in the intrinsic coordinates $(t_1,t_2,t_3)$). We solve the PDE problem by minimizing the loss function~\eqref{eqn:loss4example1} with $\lambda=0$ since the problem has no boundary. The hyperparameter setting of DM and NN is presented in Table~\ref{tab:3dconfigs} in Appendix~\ref{appendix_d}. 

Figure~\ref{fig:3derror} displays the error for DM and NN, where the latter is the solution we obtained using the Polynomial-Sine activation function. Here, the testing error refers to the $\ell_\infty$ error on $80^3$ Gauss-Legendre quadrature points obtained from the intrinsic coordinates $(t_1,t_2,t_3)$. From Figure~\ref{fig:3derror}, one can see that the NN solver produces convergent solutions. Besides, when $N$ is large (e.g., $N=4096$ or $12167$), NN obtains a slightly more accurate solution than DM. We report the training and testing errors for different activation functions and optimizers in Table \ref{tab:activations}. The training errors of Polynomial-Sine with Adam are comparable to that of ReLU and ReLU$^3$ with gradient descent, but the Polynomial-Sine FNN optimized by Adam obtains the lowest testing error for most $N$. This finding is not surprising since the true solution is a product of sine and cosine functions. In the next section, we will consider problems where the true solutions are unknown and we will see that other type of activation functions produce more accurate results.


\begin{table}[htbp]
  \centering
  \begin{adjustbox}{width=0.8\textwidth,center}
    \begin{tabular}{cccccccc}
    \toprule
        Activation  & Optimizer & N     & 512   & 1331  & 4096  & 12167 & 24389 \\
    \midrule
    \multirow{2}[2]{*}{Polynomial-Sine} & \multirow{2}[2]{*}{Adam} & training error & 0.2665 & 0.1148 & 0.0297 & 0.0066 & 0.0023 \\
          &       & testing error & 0.2715 & 0.1346 & 0.0302 & 0.0069 & 0.0024 \\
    \midrule
    \multirow{2}[2]{*}{ReLU$^3$} & \multirow{2}[2]{*}{gradient descent} & training error & 0.2624 & 0.1137 & 0.0309 & 0.0058 & 0.0025 \\
          &       & testing error & 0.2994 & 0.1977 & 0.0425 & 0.0147 & 0.0103 \\
    \midrule
    \multirow{2}[2]{*}{ReLU} & \multirow{2}[2]{*}{gradient descent} & training error & 0.2637 & 0.1145 & 0.032 & 0.0066 & 0.0016 \\
          &       & testing error & 0.4673 & 0.198 & 0.0326 & 0.0069 & 0.0016 \\
    \bottomrule
    \end{tabular}%
    \end{adjustbox}
  \caption{The comparison of the $\ell_\infty$ errors for different activation functions and optimizers on the 3D manifold embedded in $\mathbb{R}^{12}$.}
  \label{tab:activations}%
\end{table}%

\begin{figure}
\centering
\includegraphics[width=.5\textwidth]{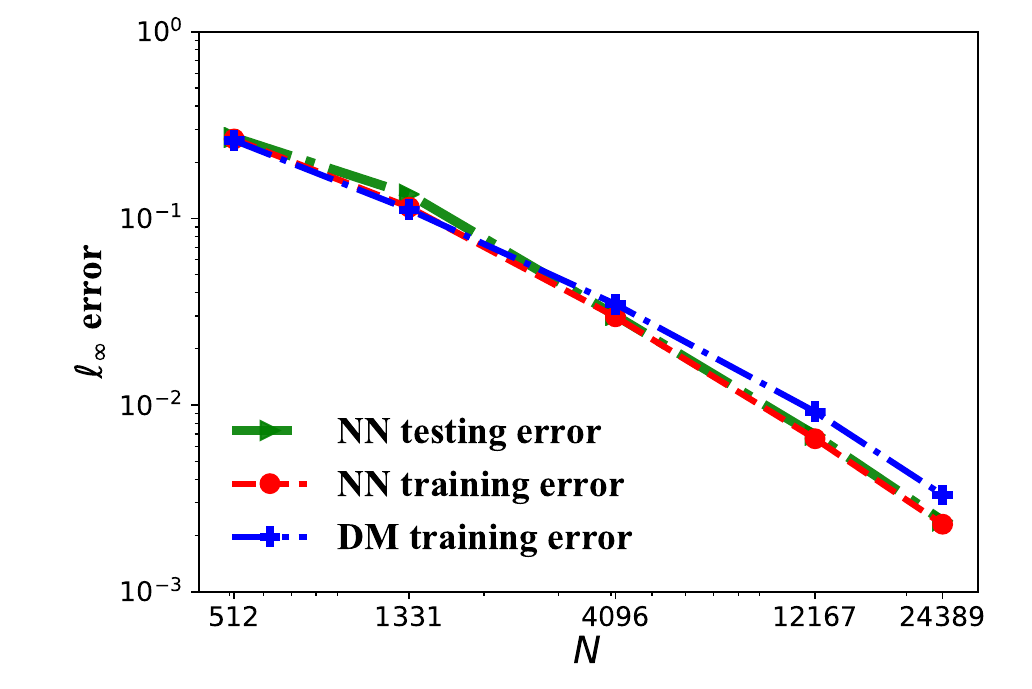}
\caption{The comparison of the $\ell_\infty$ errors as functions of the number of training points $N$ for DM and NN on the 3D manifold embedded in $\mathbb{R}^{12}$. }
\label{fig:3derror}
\end{figure}

\comment{
\subsection{Example 3: 2D Semi-Torus with Dirichlet Conditions}
\label{sec:semitorus}
We consider solving the PDE in \eqref{ellipticPDE} with $a=0$ on a two-dimensional semi-torus $M$ with a Dirichlet boundary condition. Here, the embedding function is the same as in \eqref{eqn:torus}
except that the range of $t_2$ is changed to $0\leq t_2\leq\pi$. Also, $\kappa$ and the true solution $u$ are defined as in Example~1 except for $0\leq t_2\leq \pi$, and the Dirichlet boundary condition is imposed by setting $g$ to correspond to the solution $u$ at $t_2=0,\pi$ and $0\leq t_1\leq 2\pi$.

We obtain the NN-based solution by solving the optimization problem in \eqref{eqn:DMNN2} with $\lambda=5$ and $\mathbf{a}=0$. The other hyperparameter setting can be found in Table~\ref{tab:2dconfigsbc} in Appendix~\ref{appendix_d}. The results (see Figure~\ref{fig:2derrorbc}) show that our NN method works well on the equation with boundary condition. We refer the readers to Table~\ref{tab:2derrorbc} for the detailed numerical values corresponding to this figure.

\begin{figure}
\centering
\includegraphics[width=.5\textwidth]{2DerrorBC.pdf}
\caption{The comparison of the errors as functions of the number of training points for DM and NN on 2D semi-torus with Dirichlet condition.}
\label{fig:2derrorbc}
\end{figure}
}
\subsection{Unknown surfaces in $\mathbb{R}^{3}$}\label{sec:num4}
In this subsection, we consider solving the elliptic PDEs on the unknown surfaces in $\mathbb{R}^{3}$ with or without boundary, including the Bunny (no boundary) and Face (with boundary). The data set for the Bunny surface is from the Stanford 3D Scanning Repository~\cite{bunny}. The original data set of the Stanford Bunny comprises a triangle mesh with 34,817 vertices. Since this data set has singular regions at the bottom, we generate a new mesh of the surface using the Marching Cubes algorithm~\cite{lorensen1987marching} that is available through the  Meshlab~\cite{meshlab}. We should point out that the Marching Cubes algorithm does not smooth the surface. Instead, we will use the vertices of the new mesh as sample points to avoid singularity induced by the original data set. To generate various sizes of sample points, we subsequently apply the Quadric edge algorithm \cite{garland1997surface} to simplify the mesh obtained by the Marching Cubes algorithm (which is a surface of 34,594 vertices) into 4,326, 8,650, and 17,299 vertices. The surface Face is obtained from Keenan Crane’s 3D repository~\cite{keenan}. The original Face consists of 17,157 vertices and we generate the datasets of 2,185, 4,340, and 8,261 vertices by employing the Quadric edge algorithm \cite{garland1997surface} in Meshlab~\cite{meshlab}. Besides these data resampling, we also multiply the coordinate of Face by 10 to avoid a solution close to zero function. Since the analytic solution is not accessible, we benchmark our solution against the finite element method (FEM) solution, obtained from the FELICITY FEM Matlab toolbox~\cite{walker2018felicity}. The estimator $\mathbf{L}_\epsilon$ of the Laplace–Beltrami operator is constructed by the variable bandwidth diffusion mapping (VBDM). When training with less than the largest available data set (34,594 vertices for the Bunny and 17,157 vertices for the Face), we report the testing error, which is based on $\ell_\infty$ norm over the largest data set. So, the testing errors reported in the following two examples also include the interpolation error since they are the maximum errors of the solutions on both the training and testing data sets.

\subsubsection{The Stanford Bunny} We denote the point cloud (vertices) of this surface as,  $\vx=(x_1,x_2,x_3)\in M \subset \mathbb{R}^3$. Here, we consider solving the PDE in \eqref{ellipticPDE} with $a = -0.2, \kappa=1$ and $f=x_1+x_2+x_3$. We solve the PDE problem by minimizing the loss function~\eqref{eqn:loss4example1} with $\gamma=0$ and $\lambda=0$ since the problem has nontrivial $a$ and no boundary. The hyperparameter setting of DM and NN is presented in Table~\ref{tab:bunnyparams} in Appendix~\ref{appendix_d}.

Table~\ref{tab:bunny} shows the error comparison of the solutions between VBDM and NN methods with the growth of the data point size. We can see that the NN solver gives convergent solutions as VBDM. \rev{We should also point out that the similar training error rates of NN and VBDM reported in this table suggest that the error is dominated by the VBDM approximation. The rate, however, is faster than the theoretical error bound $\epsilon\sim N^{-\frac{2}{6+d}}$ attained by balancing the first and third bounds in Theorem~\ref{thm:pconv} since the numerical experiment is conducted with empirically chosen parameters $\epsilon$ and $k$ in k-nearest neighbors algorithm to optimize the accuracy of the solutions (see Appendix C for the chosen parameters).} Besides, the NN solutions suggest a good generalization as the testing error is consistent with its training error. Figure~\ref{fig:bunny} displays the visualization for the case of $N=17299$ and $N=4326$ in Table~\ref{tab:bunny}. 

\begin{table}[H]
  \centering
    \begin{tabular}{cccccc}
    \toprule
          & N     & 4326  & 8650  & 17299 & 34594 \\
    \midrule
    VBDM  & training error & 0.2047 & 0.1017 & 0.0514 & 0.0283 \\
    \midrule
    \multirow{2}[2]{*}{NN} & training error & 0.2048 & 0.1019 & 0.0518 & 0.0296 \\
          & testing error & 0.2050 & 0.1023 & 0.0517 & -- \\
    \bottomrule
    \end{tabular}%
    \caption{Error comparison of the PDE solutions between VBDM and NN. The testing error is the $\ell_\infty$ error between the FEM and the NN solutions on the 34,594 vertices.}
  \label{tab:bunny}%
\end{table}%

\begin{figure}[H]
\centering     
\subfigure[FEM solution ($N=17\text{k}$)]{\includegraphics[width=.242\textwidth]{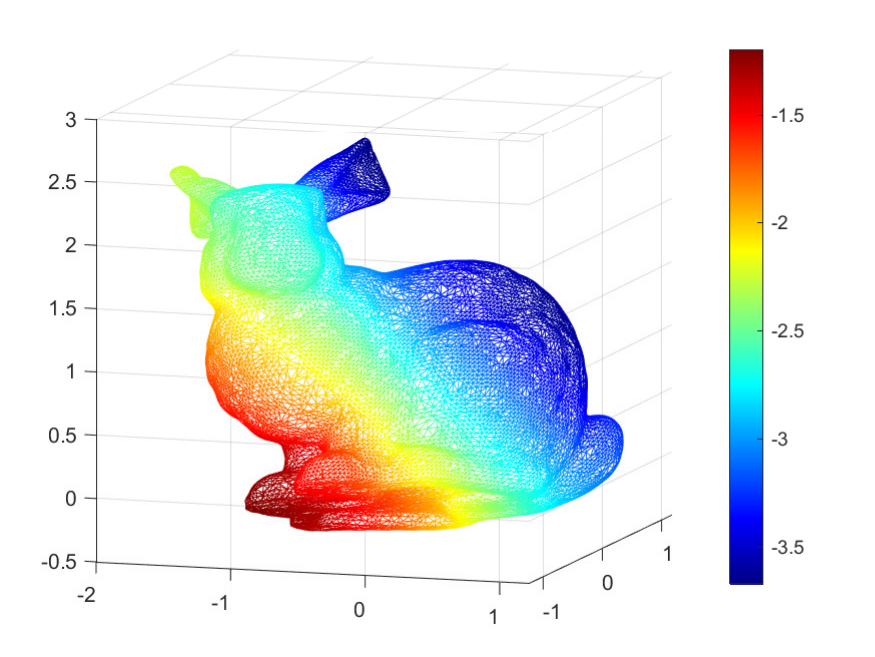}}
\subfigure[|FEM - VBDM| ($N=17\text{k}$) ]{\includegraphics[width=.242\textwidth]{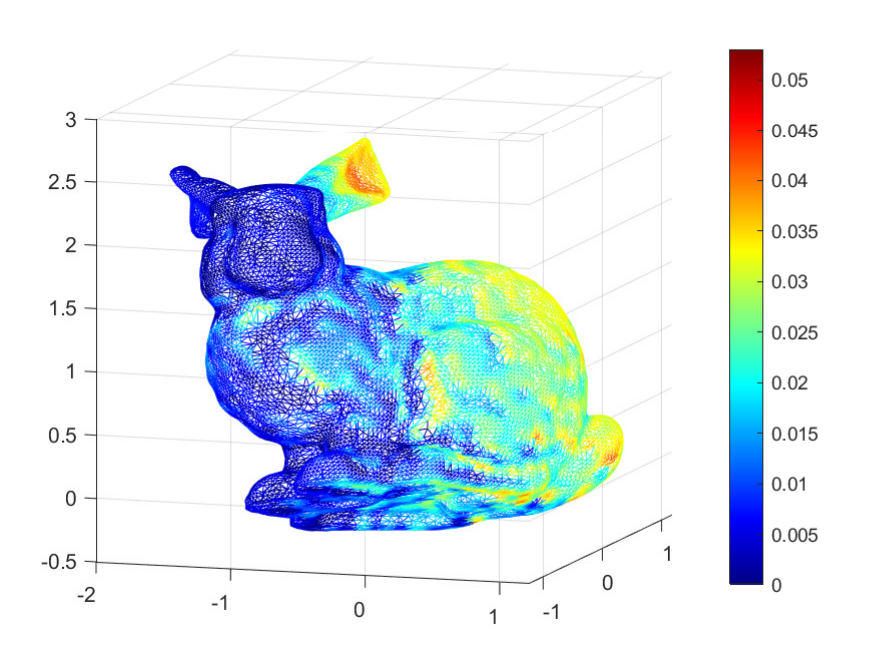}}
\subfigure[|FEM - NN| ($N=17\text{k}$)]{\includegraphics[width=.242\textwidth]{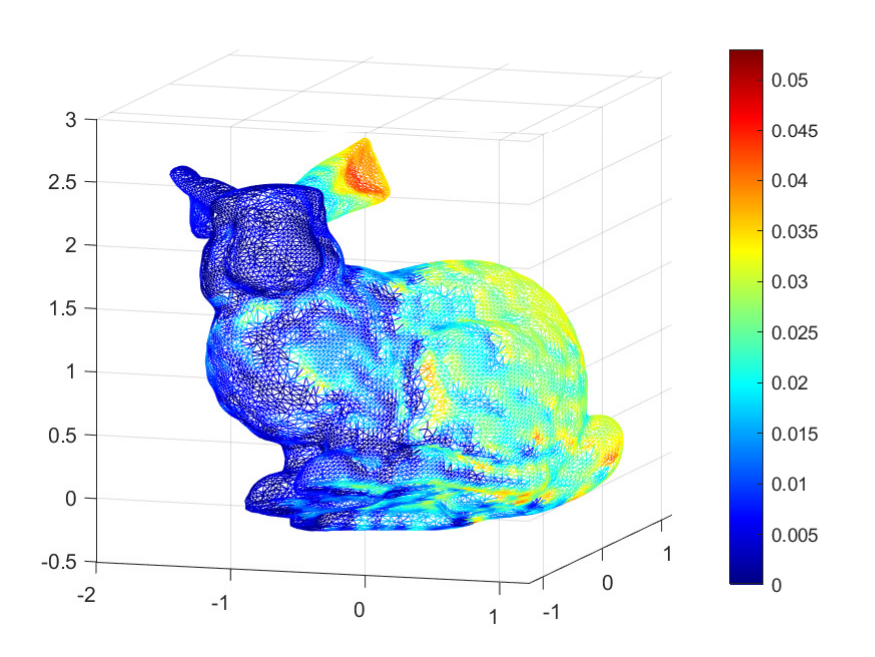}}
\subfigure[|FEM - NN| (test $N=35\text{k}$ )]{\includegraphics[width=.242\textwidth]{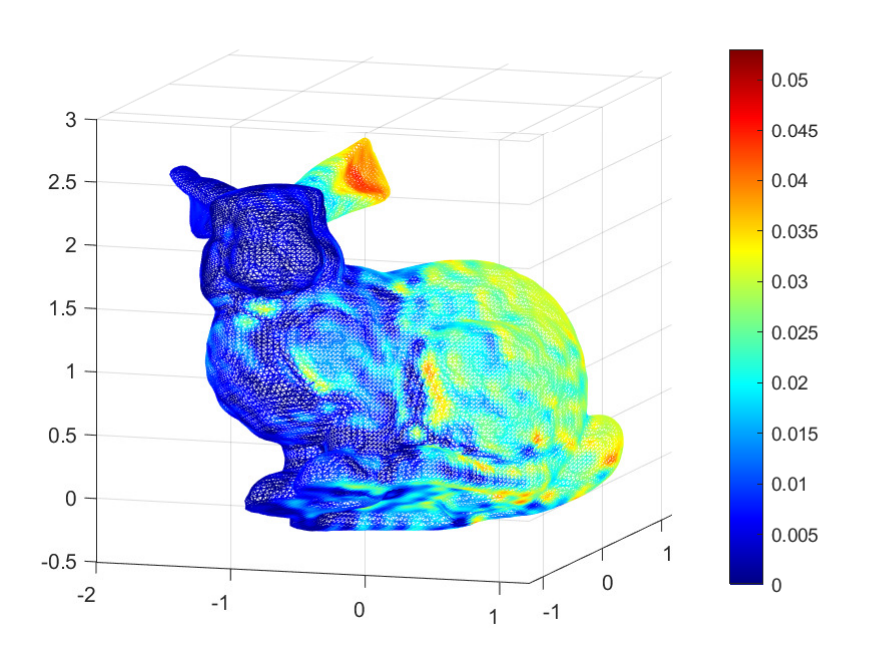}} \\
\subfigure[FEM solution ($N=4\text{k}$)]{\includegraphics[width=.242\textwidth]{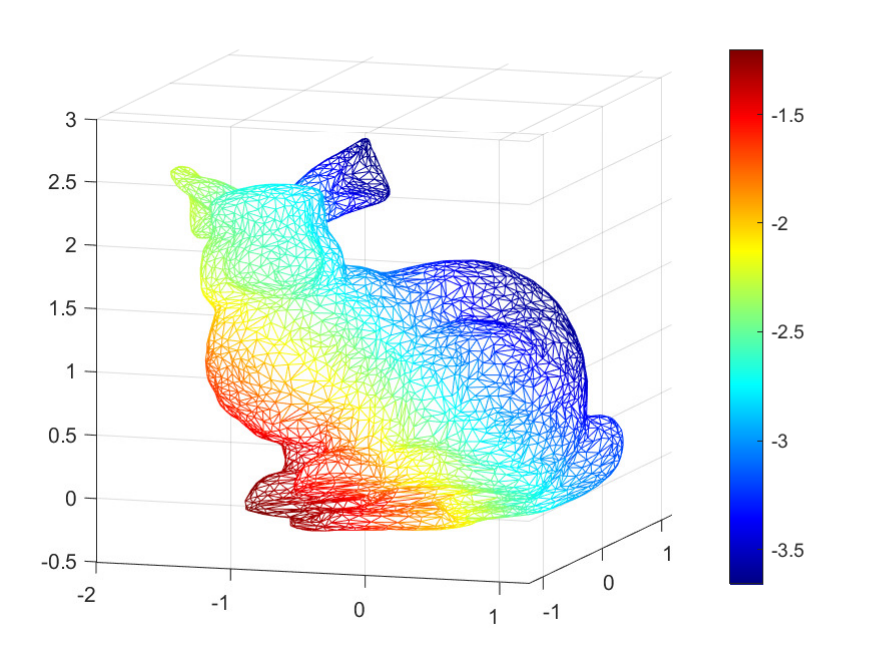}}
\subfigure[|FEM-VBDM| ($N=4\text{k}$) ]{\label{fig:bunnyvbdm}\includegraphics[width=.242\textwidth]{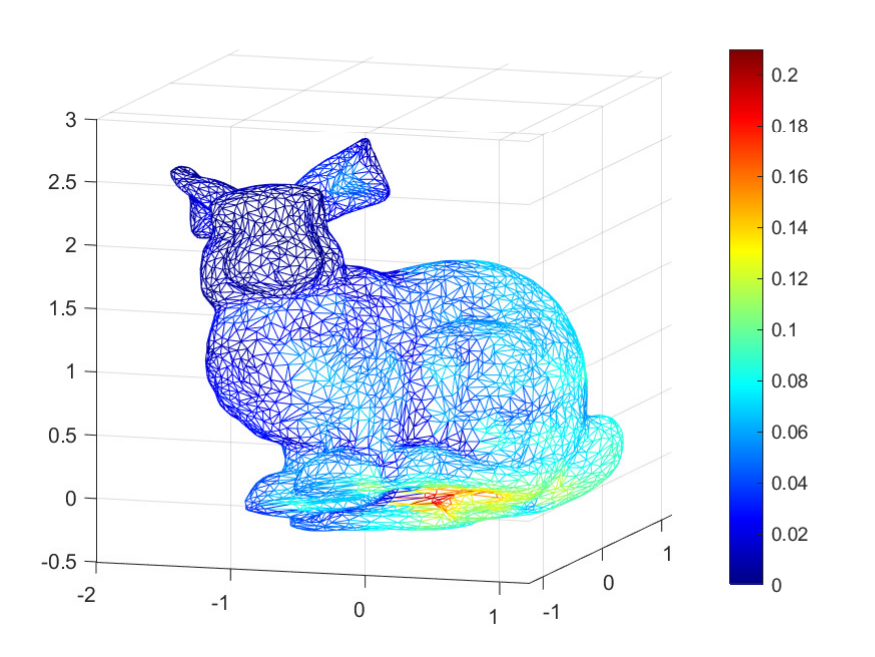}}
\subfigure[|FEM- NN| ($N=4\text{k}$)]{\label{fig:bunnynn}\includegraphics[width=.242\textwidth]{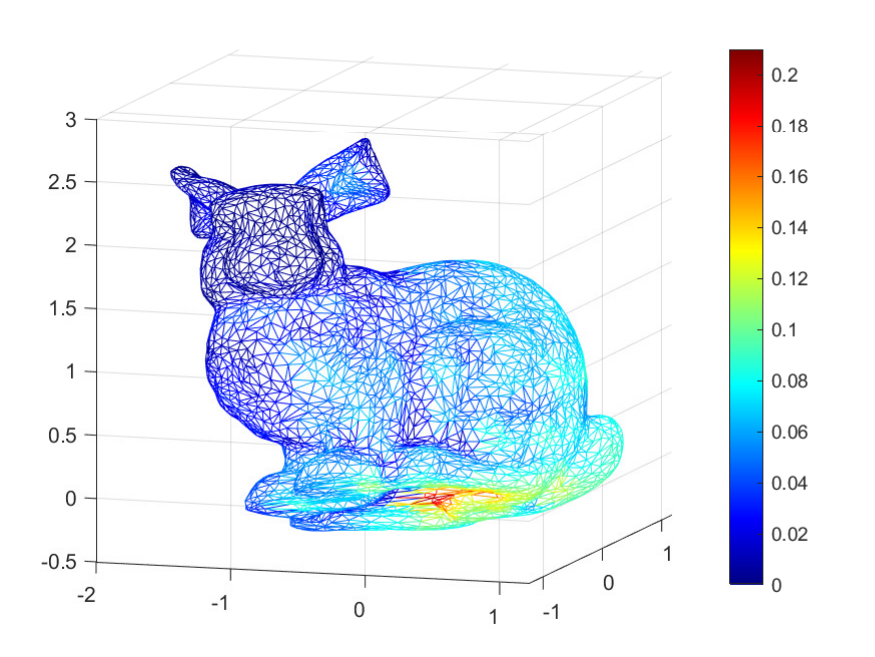}}
\subfigure[|FEM - NN| (test $N=35\text{k}$ ) ]{\label{fig:bunnynn_test}\includegraphics[width=.242\textwidth]{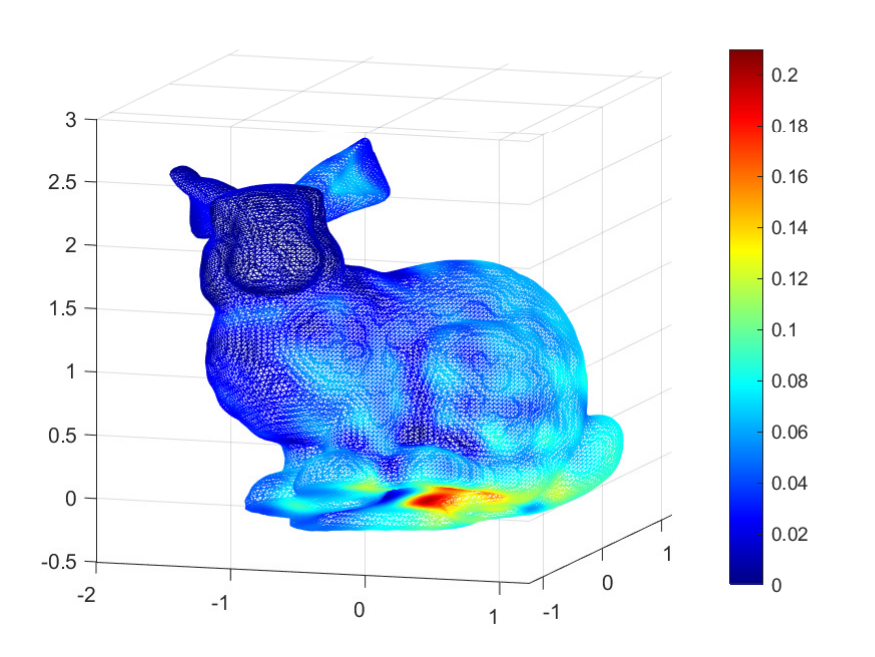}}
\caption{Comparison of the PDE solutions among FEM, VBDM and NN on the Bunny example. {\bf (a)} FEM solution with $N=17299$ training data. {\bf (b)} Absolute difference between FEM and VBDM solutions on training data set of size $N=17299$. {\bf (c)} Absolute difference between FEM and NN solutions on training data set of size $N=17299$. {\bf (d)} Absolute difference between FEM and NN solutions on $34594$ data points from the Marching Cubes algorithm, 17299 of these points were used to obtain errors in (b) and (c). {\bf (e)} FEM solution with $N=4,326$ training data. {\bf (f)} Absolute difference between FEM and VBDM solutions on training data set of size $N=4,326$. (g) Absolute difference between FEM and NN solutions on training data set of size $N=4,326$. (h) Exactly like (d) except that only 4326 of these points are used to obtain errors in (f) and (g).}
\label{fig:bunny}
\end{figure}

\subsubsection{Face example with the Dirichlet boundary condition.} In this face example, we let $a = 0, \kappa=1$ and $f=x_1+x_2+x_3$ in the PDE~\eqref{ellipticPDE}, where $\vx=(x_1,x_2,x_3)\in M \subset \mathbb{R}^3$, and we impose the Dirichlet boundary condition ($u=0$ on $\partial M$). The hyperparameter setting can be referred to Table~\ref{tab:faceparams}. In this example, we found that the constructed $\mathbf{L}_\epsilon$ is extremely ill-conditioned (the condition number is $2.0\times 10^5$ when $N=17157$), the least-squares optimization becomes difficult. To facilitate the convergence of NN training, we increase the training iteration and use a more time-efficient activation function (such as ReLU and Tanh) instead of Polynomial-Sine function. Table~\ref{tab:face} shows the error comparison of the solutions between VBDM and NN methods with the growth of the data point size. We can see that the NN solver gives the solutions with error similar to VBDM on the training data points. Besides, the testing error of NN is close to its training error. Figure~\ref{fig:face} displays the visualization of the FEM, VBDM and NN solutions. 

\begin{table}[H]
  \centering
    \begin{tabular}{cccccc}
    \toprule
          & N     & 2185  & 4340  & 8261  & 17157 \\
    \midrule
    VBDM  & training error & 0.1548 & 0.1340 & 0.0967 & 0.0708 \\
    \midrule
    \multirow{2}[2]{*}{NN} & training error & 0.1548 & 0.1340 & 0.0967 & 0.0911 \\
          & testing error & 0.1761 & 0.1429 & 0.0984 &  -- \\
    \bottomrule
    \end{tabular}%
    \caption{Error comparison of the PDE solutions between VBDM and NN. The testing error is the $\ell_\infty$ error between the FEM and the NN solutions on the 17,157 vertices.}
  \label{tab:face}%
\end{table}%


\begin{figure}[htbp]
\centering     
\subfigure[FEM solution ($N=8\text{k}$)]{\includegraphics[width=.24\textwidth]{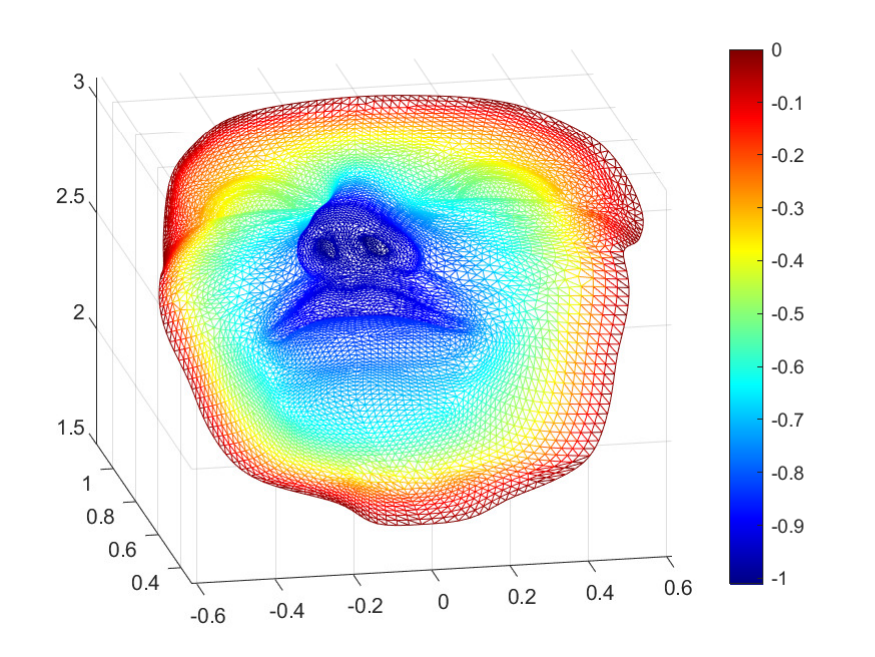}}
\subfigure[|FEM - VBDM| ($N=8\text{k}$)]{\includegraphics[width=.24\textwidth]{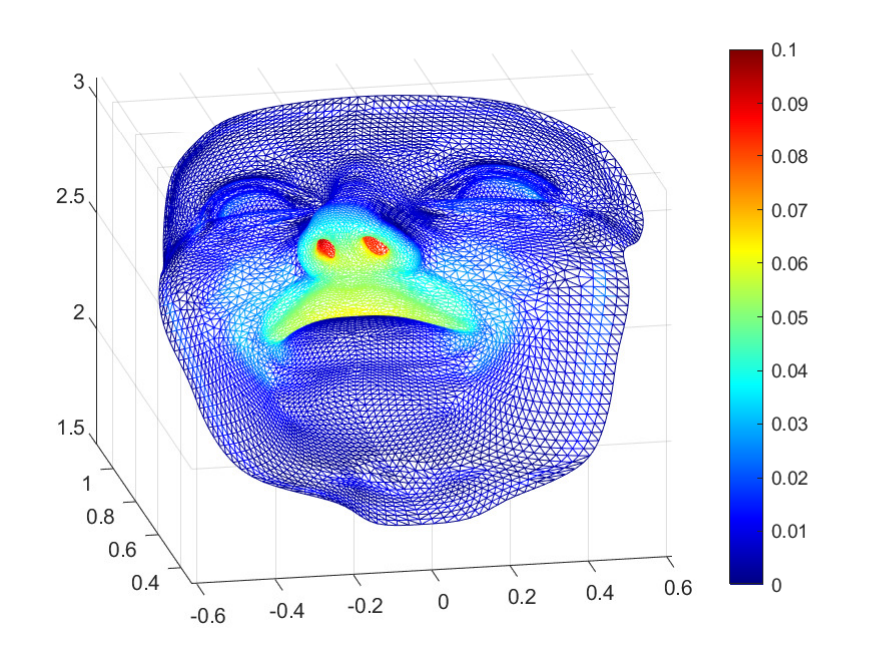}}
\subfigure[|FEM - NN| ($N=8\text{k}$)]{\includegraphics[width=.24\textwidth]{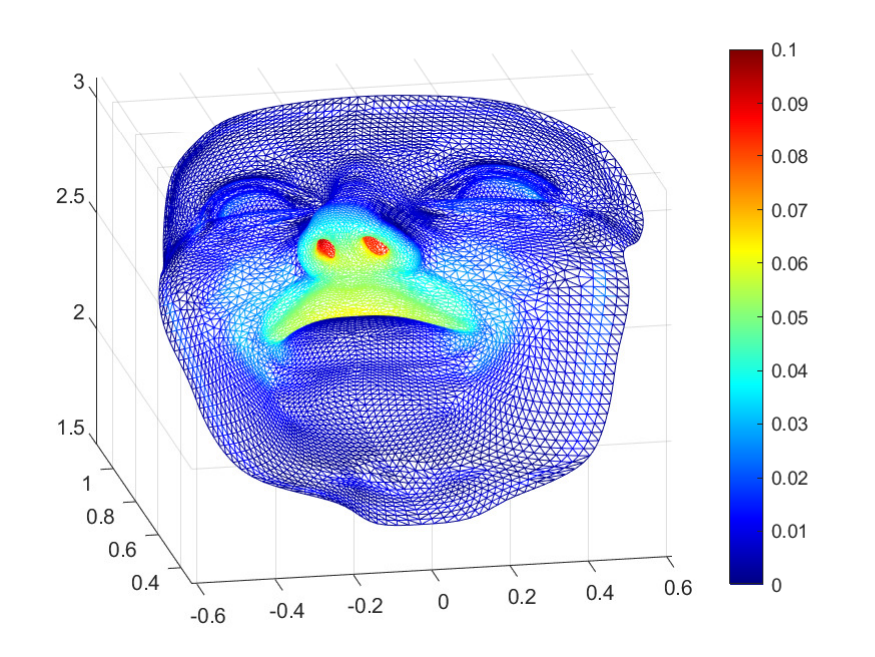}}
\subfigure[|FEM - NN| (test $N=17\text{k}$)]{\includegraphics[width=.24\textwidth]{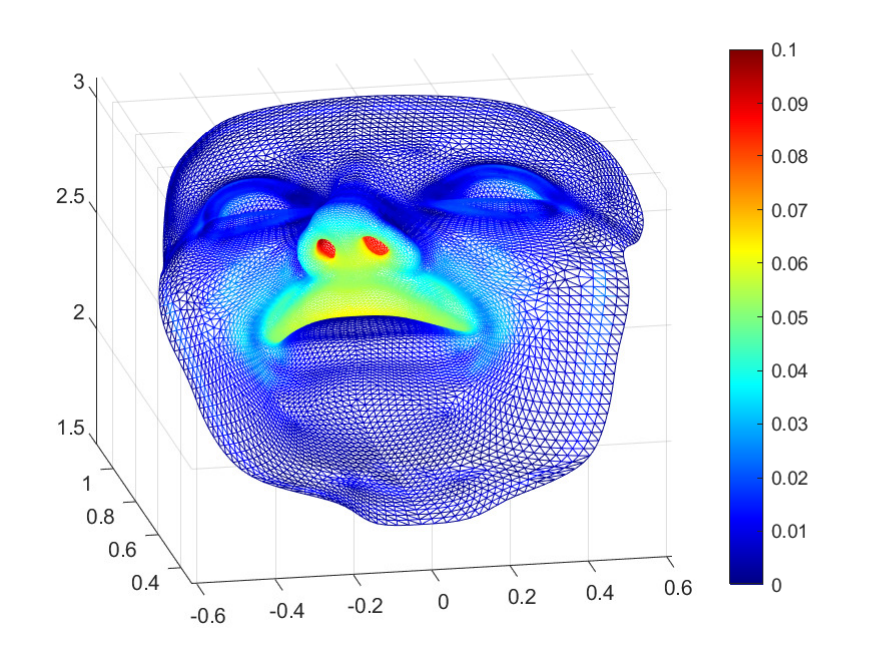}} \\
\subfigure[FEM solution ($N=2\text{k}$)]{\includegraphics[width=.24\textwidth]{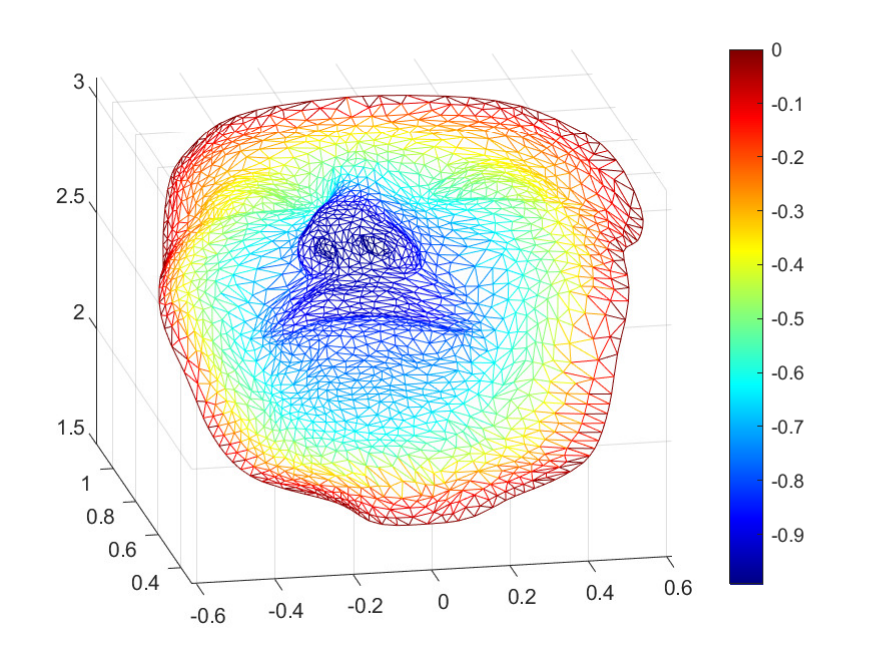}}
\subfigure[|FEM - VBDM| ($N=2\text{k}$)]{\includegraphics[width=.24\textwidth]{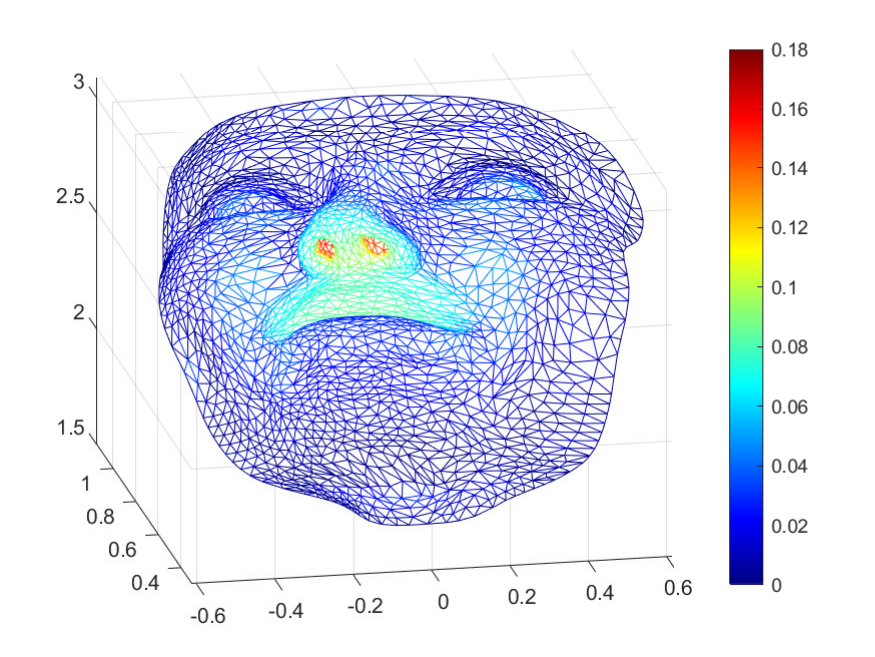}}
\subfigure[|FEM - NN| ($N=2\text{k}$)]{\includegraphics[width=.24\textwidth]{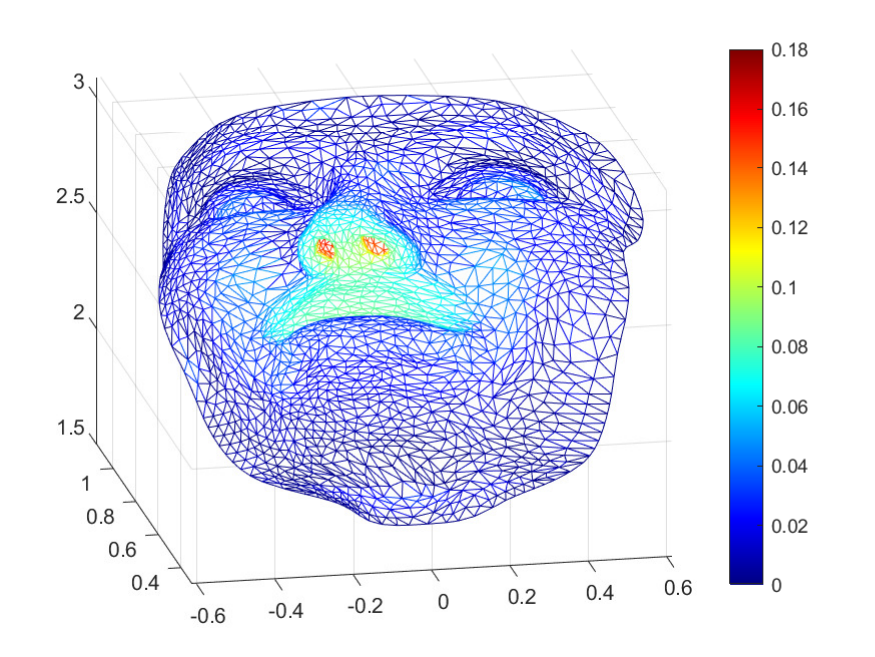}}
\subfigure[|FEM - NN| (test $N=17\text{k}$)]{\label{fig:facenn}\includegraphics[width=.24\textwidth]{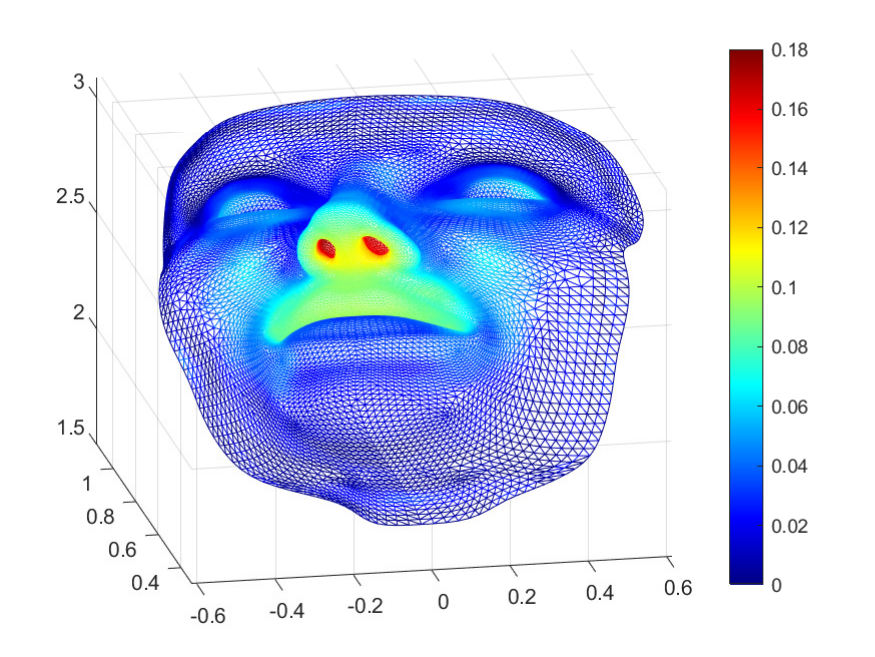}}
\caption{Comparison of the PDE solutions among FEM, VBDM and NN on the Bunny example. {\bf (a)} FEM solution with $N=8261$ training data. {\bf (b)} Absolute difference between FEM and VBDM solutions on training data set of size $N=8261$. {\bf (c)} Absolute difference between FEM and NN solutions on training data set of size $N=8261$. {\bf (d)} Absolute difference between FEM and NN solutions on $17157$ data points from the Marching Cubes algorithm, 8261 of these points were used to obtain errors in (b) and (c). {\bf (e)} FEM solution with $N=2185$ training data. {\bf (f)} Absolute difference between FEM and VBDM solutions on training data set of size $N=2185$. (g) Absolute difference between FEM and NN solutions on training data set of size $N=2185$. (h) Exactly like (d) except that only 2185 of these points are used to obtain errors in (f) and (g).
}
\label{fig:face}
\end{figure}

\comment{
\begin{figure}[htbp]
\centering     
\subfigure[FEM solution ($N=2\text{k}$)]{\label{fig:facefem}\includegraphics[width=.24\textwidth]{fem_2kpts-eps-converted-to.pdf}}
\subfigure[|FEM - VBDM| ($N=2\text{k}$)]{\includegraphics[width=.24\textwidth]{abs_fem_minus_vbdm_2kpts-eps-converted-to.pdf}}
\subfigure[|FEM - NN| ($N=2\text{k}$)]{\label{fig:facenn}\includegraphics[width=.24\textwidth]{abs_fem_minus_nn_2kpts_nn-trainedon-2kpts-eps-converted-to.pdf}}
\subfigure[|FEM - NN| ( test = $N=17K\text{k}$]{\label{fig:facenn}\includegraphics[width=.24\textwidth]{abs_fem_minus_nn_17kpts_nn-trainedon-2kpts-eps-converted-to.pdf}}
\caption{Comparison of the PDE solutions among FEM, VBDM and NN on the Face example. (a) FEM solution on 2185 vertices. (b) Absolute difference between FEM and VBDM solutions on 2185 vertices. (c) Absolute difference FEM and NN solutions (training error on 2185 vertices). (d) Absolute difference FEM and NN solutions on 17157 vertices (trained on 2185 vertices and tested on 17157 vertices), which represents the testing error. }
\label{fig:face}
\end{figure}
}

\section{Conclusion}
\label{sec:con}

This paper proposed a mesh-free computational framework and machine learning theory for solving PDEs on unknown manifolds given as a form of point clouds based on diffusion maps (DM) and deep learning. Parameterizing manifolds is challenging if the unknown manifold is embedded in a high-dimensional ambient Euclidean space, especially when the manifold is identified with randomly sampled data and has boundaries. First, a mesh-free DM algorithm was introduced to approximate differential operators on point clouds enabling the design of PDE solvers on manifolds with and without boundaries. Second, deep NNs were applied to parametrize PDE solutions. Finally, we solved a least-squares minimization problem for PDEs, where the empirical loss function is evaluated using the DM discretized differential operators on point clouds and the minimizer is identified via SGD. The minimizer provides a PDE solution in the form of an NN function on the whole unknown manifold with reasonably good accuracy. The mesh-free nature and randomization of the proposed solver enable efficient solutions to PDEs on manifolds arbitrary co-dimensional. 
 \rev{Computationally, we found that training NN solutions are computationally more feasible compared to performing (a stable) pseudo-inversion in solving linear problems with a singular matrix as the size of the matrix becomes large. On the other hand, this advantage diminishes when the linear problem involves inversion of non-singular matrix and the goal is to achieve high accuracy; where classical algorithms (e.g., GMRES, iterative methods) are more efficient. When the linear system is too large to compute, it was recently pointed out in \cite{YiqiMichael} that there are advantages to apply NNs to solve linear systems in a similar way as in this paper even though the linear system is non-singular, while traditional methods are not affordable.  Beyond the training procedure, the NN solution is advantageous for interpolating the solution to a new datum.} From the numerical results, we found that the proposed approach produces consistently accurate generalizations that are more robust than the one constructed using the classical Nystr\"om interpolation technique, \rev{especially when the latter is practically not convenient due to the following fact. The accuracy of the Nystr\"om extension depends crucially on the choice of the dimension of the eigenspace, $J$, that allow for the projected solution in \eqref{nystrom} to have a small residual in addition to estimating the leading $J$ eigenvector $\mathbf{v}_i$ of Laplace-Beltrami operator.}
 
 New convergence and consistency theories based on approximation and optimization analysis were developed to support the proposed framework. From the perspective of algorithm development, it is interesting to extend the proposed framework to {vector-valued PDEs with other differential operators in future work.} In terms of theoretical analysis, it is important to develop a generalization analysis of the proposed solver in the future and extend all of the analysis in this paper to manifolds with boundaries.    

\section*{Code availability}
\revv{The neural network training codes are available at the GitHub repository: \url{https://github.com/LeungSamWai/NN4ManifoldPDE}. The source codes are released under MIT license.}

\section*{Acknowledgment}

The research of J. H. was partially supported under the NSF grant DMS-1854299 and the ONR grant N00014-22-1-2193. S. J. was supported by the NSFC Grant No. 12101408. S. L. is supported by the Ross-Lynn fellowship 2021-2022 from Purdue University. S. L. and H. Y. were partially supported by the US National Science Foundation under awards DMS-2244988, DMS-2206333, and the Office of Naval Research Award N00014-23-1-2007.

\appendix

\section{GPDM algorithm for manifolds with boundaries}\label{appendix_a}

In this appendix, we give a brief overview of ghost point diffusion maps (GPDM) to construct the matrix $\mathbf{L}_\epsilon$ for manifolds with Dirichlet boundary conditions.
As mentioned in
\cite{cl:06,harlim:18,gh:18,jiang2023ghost},
the asymptotic expansion (\ref{integralop1}) 
in standard DM approaches is not valid near the boundary of the manifold. One way to overcome this boundary issue is the GPDM approach introduced in \cite%
{jiang2023ghost}. This approach extends the classical ghost point
method \cite{leveque2007finite} to solve elliptic and parabolic PDEs on unknown manifolds with boundary conditions \cite{jiang2023ghost,yan2021kernel}. The GPDM approach can be summarized as follows (see \cite{jiang2023ghost,yan2021kernel} for details).



\textbf{Estimation of normal vectors at boundary points }(see details in
Section 2.2 and Appendix A of \cite{yan2021kernel})\textbf{:} Assume the
true normal vector $\boldsymbol{\nu }$ is unknown and it will be numerically
estimated. For well-sampled data, where data points are
well-ordered along intrinsic coordinates, one can identify the estimated $\tilde{%
\boldsymbol{\nu }}$ as the secant line approximation to $\boldsymbol{\nu }$%
. The error is $|\boldsymbol{\nu }-\boldsymbol{\tilde{\nu}}|=\mathcal{O}(h)$, where
the parameter $h$ denotes the distance between consecutive ghost points (see
Fig.~1(b) in \cite{yan2021kernel} for a geometric illustration). For
randomly sampled data, one can use the kernel method to estimate $%
\boldsymbol{\tilde{\nu}}$ and the error is $|\boldsymbol{\nu }-\boldsymbol{%
\tilde{\nu}}|=\mathcal{O}(\sqrt{\epsilon })$ (see Fig.~1(c) in \cite{yan2021kernel}
for a geometric illustration and Appendix~A in \cite{yan2021kernel} for a
detailed discussion).

\textbf{Specification of ghost points:}
The basic idea of ghost points, as introduced in \cite{jiang2023ghost}, is
to specify the ghost points as data points that lie on the exterior normal
collar, $\Delta M$, along the boundary.\ Then all interior points whose
distances are within $\epsilon ^{r}$ from the boundary $\partial M$ are at
least $\epsilon ^{r}$ away from the boundary of the extended manifold $M\cup
\Delta M$. Theoretically, it was shown that, under appropriate conditions,
the extended set $M\cup \Delta M$ can be isometrically embedded with an
embedding function that is consistent with the embedding $M\hookrightarrow
\mathbb{R}^{m}$ when restricted on $M$ (see Lemma~3.5 in \cite%
{jiang2023ghost}).

Technically, for randomly sampled data, the parameter $h$ can be estimated
by the mean distance from the boundary $\mathbf{\bar{x}}_{b}$ to its $P$
(around 10 in simulations) nearest neighbors. Then, given the distance
parameter $h$ and the estimated normal vector $\boldsymbol{\tilde{\nu}}$,
the approximate ghost points are given by,
\begin{equation}
\mathbf{\tilde{x}}_{b,k}=\mathbf{\bar{x}}_{b}+kh\boldsymbol{\tilde{\nu}}%
,\quad \text{for }k=1,\ldots ,{K}\text{ and }b=1,\ldots ,{N}_{b}.
\label{trugho}
\end{equation}%
In addition, one layer of interior ghost points are supplemented as $\mathbf{%
\tilde{x}}_{b,0}=\mathbf{\bar{x}}_{b}-h\boldsymbol{\tilde{\nu}}$. For
well-sampled data, the interior estimated ghost point coincides with one of the interior points on the manifold when the secant line method is used. However, for randomly sampled data, the estimated interior ghost points will not necessarily coincide with an interior point (see Fig.~1 in \cite{yan2021kernel}\ for comparison).

\textbf{Estimation of function values on the ghost points:} 

The main goal here is to estimate the function values $\{u(\mathbf{\tilde{x}}%
_{b,k})\}_{b,k=1}^{{N}_{b},{K}}$ on the exterior ghost points by
extrapolation.
We assume that we are given the components of the column vector,
\begin{equation}
\mathbf{u}_{M}:=(u(\mathbf{x}_{1}),\ldots ,u(\mathbf{\tilde{x}}%
_{b,0}),\ldots ,u(\mathbf{x}_{N}))\in \mathbb{R}^{N}.  \label{umh}
\end{equation}%
Here, we stress that the function values $\{u(\mathbf{\tilde{x}}%
_{b,0})\}_{b=1}^{N_b}$ are given exactly like the $u(\mathbf{x}_{i})$ for any $%
\mathbf{x}_{i}\in M$, even when the ghost points $\mathbf{\tilde{x}}_{b,0}$
do not lie on the manifold $M$. Then we will use the components of the
column vector,
\begin{equation}
\mathbf{U}_{G}:=(U_{1,1},\ldots ,U_{{{N_b}},{K}})\in \mathbb{R}^{{{N_b}}{K}},
\label{ugh}
\end{equation}%
to estimate the components of function values on ghost points,
$(u(\mathbf{\tilde{x}}_{1,1}),\ldots ,u(\mathbf{\tilde{x}}_{{{N_b}},{K}}))\in \mathbb{R}^{{{N_b}%
}{K}}$. Numerically, we will obtain the components of $\mathbf{U}_{G}$ by
solving the following linear algebraic equations for each $b=1,\ldots ,{{N_b}}$,
\begin{equation}
\begin{aligned} U_{b,1} - 2u({\bf{x}}_{b})+ u(\tilde{{\bf{x}}}_{b,0}) &=
0,\\ U_{b,2}-2U_{b ,1}+u({\bf{x}}_{b})&=0, \\ U_{b ,k}-2U_{b,k-1}+U_{b
,k-2}&=0, \quad\quad k = 3,\ldots {K}. \\ \end{aligned}  \label{Eqn:uvv_g2}
\end{equation}%
These algebraic equations are discrete analogs of matching the first-order
derivatives along the estimated normal direction, $\tilde{\boldsymbol{\nu }}$%
.

\textbf{Construction of the GPDM estimator:} We now define the GPDM
estimator for the differential operator $\mathcal{L}$ in \eqref{ellipticPDE}%
. The discrete estimator will be constructed based on the available training
data $\{\mathbf{x}_{i}\in M\}_{i=1}^{N}$ and the estimated ghost points, $\{%
\mathbf{\tilde{x}}_{b,k}\}_{b,k=1,0}^{{N}_{b},{K}}$. In particular, since
the interior ghost points, $\{\mathbf{\tilde{x}}_{b,0}\}_{b=1}^{{N}_{b}}$
{may or may not coincide with any interior points on the manifold,} we assume that $X^{h}:=\{\mathbf{x}_{1},\ldots ,%
\mathbf{\tilde{x}}_{b,0},\ldots ,\mathbf{x}_{N}\}$ has $N$ components that
include the estimated ghost points in the
following discussion. With this notation, we define a
non-square matrix,
\begin{equation}
\mathbf{L}^{h}:=(\mathbf{L}^{(1)},\mathbf{L}^{(2)})\in \mathbb{R}^{N\times
(N+N_{b}K)},  \label{eqnLtild}
\end{equation}%
constructed right after \eqref{integralapprox}, 
by evaluating the kernel on
components of $X^{h}$ for each row and the components of $X^{h}\cup \{%
\mathbf{\tilde{x}}_{b,k}\}_{b,k=1}^{{N}_{b},{K}}$ for each column. With
these definitions and those in (\ref{umh})-(\ref{Eqn:uvv_g2}), we note that
\[
\mathbf{L}^{h}\mathbf{U}=\mathbf{L}^{(1)}\mathbf{u}_{M}+\mathbf{L}^{(2)}%
\mathbf{U}_{G}=(\mathbf{L}^{(1)}+\mathbf{L}^{(2)}\mathbf{G})\mathbf{u}_{M}=%
\mathbf{\tilde{L}}\mathbf{u}_{M},
\]%
where ${\mathbf{U}}:=({\mathbf{u}}_{M},{\mathbf{U}}_{G})\in \mathbb{R}%
^{N+N_{b}K}$ and $\mathbf{G}\in \mathbb{R}^{{{N_b}}{K}\times N}$ is defined as a
solution operator to \eqref{Eqn:uvv_g2}, which is given in a compact form as $%
\mathbf{U}_{G}=\mathbf{G}\mathbf{u}_{M}$. Then, the GPDM estimator $\mathbf{%
\tilde{L}}$\ is defined as an $N\times N$ matrix,
\begin{equation}
\mathbf{\tilde{L}}:=\mathbf{L}^{(1)}+\mathbf{L}^{(2)}\mathbf{G}.
\label{GPDMmatrix}
\end{equation}%
Note that we have the consistency of GPDM estimator for the
differential operator defined on functions that take values on the extended
$M\cup \Delta M$ (see Lemma 2.3 in \cite{yan2021kernel}).

\textbf{Combination with the discretization of the boundary conditions:}
We take $\mathbf{L}_{\epsilon }\in \mathbb{R}^{(N-N_{b})\times N}$ to be a
sub-matrix of $\mathbf{\tilde{L}}$ $\in \mathbb{R}^{N\times N}$ in %
\eqref{GPDMmatrix}, where the $N-N_{b}$ rows of $\mathbf{L}_{\epsilon }$\
correspond to the interior points from $X^{h}$. There are $N-N_{b}$
equations from the linear system $(-\mathbf{a}+\mathbf{L}_{\epsilon })%
\mathbf{u}_{M}=\mathbf{f}$. To close the discretized problem, we use the $%
N_{b}$ equations from the Dirichlet boundary condition at the boundary
points, $u(\mathbf{\bar{x}}_{b})=g(\mathbf{\bar{x}}_{b})$ for $\mathbf{\bar{x%
}}_{b}\in \left\{ \mathbf{\bar{x}}_{1},\ldots ,\mathbf{\bar{x}}%
_{N_{b}}\right\} \subset X^{h}\cap \partial M$. With the construction of $%
\mathbf{L}_{\epsilon }$ and the Dirichlet boundary conditions, we
then solve the problem in \eqref{eqn:DMNN2} using DNN approach.

\section{Global convergence analysis of neural network optimization}\label{appendix_c}

In this section, we will summarize notations and main ideas of the proof of Theorem \ref{thm:lcr} first. Then we prove several lemmas in preparation for the proof of Theorem \ref{thm:lcr}. The proof of Theorem \ref{thm:lcr} will be presented after these lemmas.

\subsection{Preliminaries}
The Rademacher complexity is a basic tool for generalization analysis. In our analysis, we will use several important lemmas and theorems related to it. To be self-contained, they are listed as follows.

\begin{defi}[The Rademacher complexity of a function class $\fF$] \label{def:rad}
    Given a sample set $S=\{z_1,\dots,z_N\}$ on a domain $\fZ$, and a class $\fF$ of real-valued functions defined on $\fZ$, the empirical Rademacher complexity of $\fF$ on $S$ is defined as
    \[
    \Rad_S(\fF)=\frac{1}{N}\Exp_{\vtau}\left[ \sup_{f\in \fF} \sum_{i=1}^N \tau_i f(z_i)  \right],
    \]
    where $\tau_1$, $\dots$, $\tau_N$ are independent random variables drawn from the Rademacher distribution, i.e., $\Prob(\tau_i=+1)=\Prob(\tau_i=-1)=\frac{1}{2}$ for $i=1,\dots,N$.
\end{defi}

First, we recall a well-known contraction lemma for the Rademacher complexity. 

\begin{lem}[Contraction lemma \cite{Shalev-Shwartz2014}]\label{lem..RademacherComplexityContraction}
    Suppose that $\psi_i:\sR\to\sR$ is a $C_\mathrm{L}$-Lipschitz function for each $i\in[N]:=\{1,\ldots,N\}$. 
    For any $\vy\in\sR^N$, let $\vpsi(\vy)=(\psi_1(y_1),\cdots,\psi_N(y_N))^\T$. For an arbitrary set of functions $\fF$ on an arbitrary domain $\fZ$ and an arbitrary choice of samples $S=\{\vz_1,\dots,\vz_N\}\subset\mathcal{Z}$, we have
    \begin{align*}
        \Rad_S(\vpsi\circ \fF)\leq C_\mathrm{L}\Rad_S(\fF).
    \end{align*}
\end{lem}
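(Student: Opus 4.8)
The plan is to prove the claim by the classical ``peeling'' argument, removing the Lipschitz maps $\psi_i$ one coordinate at a time. Writing out the definition,
\[
N\,\Rad_S(\vpsi\circ\fF)=\Exp_{\vtau}\Big[\sup_{f\in\fF}\sum_{i=1}^{N}\tau_i\psi_i(f(z_i))\Big],
\]
I would show that for each $j\in[N]$ one may replace $\psi_j$ by the linear map $t\mapsto C_\mathrm{L}t$ without increasing the right-hand side; iterating over $j=1,\dots,N$ and then pulling the nonnegative constant $C_\mathrm{L}$ outside the supremum (the case $C_\mathrm{L}=0$ being trivial) gives $N\,\Rad_S(\vpsi\circ\fF)\le C_\mathrm{L}\,\Exp_{\vtau}\big[\sup_{f\in\fF}\sum_{i=1}^N\tau_i f(z_i)\big]=C_\mathrm{L}\,N\,\Rad_S(\fF)$, which is the assertion after dividing by $N$.

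For the single-coordinate step (say $j=1$), I would condition on $\tau_2,\dots,\tau_N$, abbreviate $h(f):=\sum_{i=2}^N\tau_i\psi_i(f(z_i))$, and compute the average over $\tau_1$ explicitly: $\Exp_{\tau_1}\sup_{f}\big(\tau_1\psi_1(f(z_1))+h(f)\big)=\tfrac12\sup_{f}\big(\psi_1(f(z_1))+h(f)\big)+\tfrac12\sup_{f}\big(-\psi_1(f(z_1))+h(f)\big)$. Fixing $\varepsilon>0$ and choosing near-maximizers $f_1,f_2\in\fF$ of the two suprema, the $C_\mathrm{L}$-Lipschitz property yields $\psi_1(f_1(z_1))-\psi_1(f_2(z_1))\le C_\mathrm{L}\,|f_1(z_1)-f_2(z_1)|=C_\mathrm{L}\,s\,(f_1(z_1)-f_2(z_1))$, where $s\in\{\pm1\}$ is the sign of $f_1(z_1)-f_2(z_1)$. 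Bounding $C_\mathrm{L}\,s\,f_1(z_1)+h(f_1)$ and $-C_\mathrm{L}\,s\,f_2(z_1)+h(f_2)$ by the respective suprema over $f\in\fF$ and recombining, the conditional average is at most $\Exp_{\tau_1}\sup_{f}\big(\tau_1 C_\mathrm{L}f(z_1)+h(f)\big)+\varepsilon$; letting $\varepsilon\to 0$ and then taking expectation over $\tau_2,\dots,\tau_N$ completes the step.

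The step I expect to require the most care is the conversion of $|f_1(z_1)-f_2(z_1)|$ into the \emph{signed} quantity $s\,(f_1(z_1)-f_2(z_1))$ together with the recombination into a genuine Rademacher average: this works precisely because $\Rad_S$ is defined with $\sup_{f\in\fF}$ and not $\sup_{f\in\fF}|\cdot|$, so that after the sign choice the two suprema reassemble \emph{exactly} into $\Exp_{\tau_1}\sup_{f}\big(\tau_1 C_\mathrm{L}f(z_1)+h(f)\big)$ rather than a strictly larger quantity. A minor accompanying technicality is that the suprema need not be attained, which is why the $\varepsilon$-approximate maximizers are introduced and $\varepsilon$ then sent to $0$. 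Everything else --- the conditioning, linearity of expectation, and the final extraction of the positive constant --- is routine.
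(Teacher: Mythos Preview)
Your proposal is the standard Ledoux--Talagrand peeling argument and is correct; note, however, that the paper does not supply its own proof of this lemma but merely quotes it as a known result from \cite{Shalev-Shwartz2014}, so there is no in-paper proof to compare against. Your argument is essentially the one given in that reference.
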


Second, the Rademacher complexity of linear predictors can be characterized by the lemma below.

\begin{lem}[Rademacher complexity for linear predictors \cite{Shalev-Shwartz2014}]\label{lem..linear}
    Let $\Theta=\{\vw_1,\cdots,\vw_\width\}\in\sR^n$. Let $\fG=\{g(\vw)= \vw^\T\vx:\norm{\vx}_1\leq 1\}$ be the linear function class with parameter $\vx$ whose $\ell^1$ norm is bounded by $1$. Then
    \begin{equation*}
        \Rad_\Theta(\fG)\leq \max_{1\leq k\leq m} \norm{\vw_k}_\infty\sqrt{\frac{2\log(2n)}{\width}}.
    \end{equation*}
\end{lem}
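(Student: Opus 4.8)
To establish Lemma~\ref{lem..linear}, the plan is to reduce the empirical Rademacher complexity of the linear class $\fG$ to a maximal inequality over a finite family of sub-Gaussian random variables, which is the standard route for bounds of this type. First I would unfold Definition~\ref{def:rad} with the sample set $\Theta=\{\vw_1,\dots,\vw_\width\}$ and use linearity of the inner product to move the Rademacher signs inside the sum:
\begin{align*}
\Rad_\Theta(\fG)
&=\frac{1}{\width}\Exp_{\vtau}\left[\sup_{\norm{\vx}_1\leq 1}\sum_{k=1}^\width\tau_k\,\vw_k^\T\vx\right]\\
&=\frac{1}{\width}\Exp_{\vtau}\left[\sup_{\norm{\vx}_1\leq 1}\Big(\sum_{k=1}^\width\tau_k\vw_k\Big)^{\!\T}\vx\right].
\end{align*}
Since a linear functional attains its maximum over the $\ell^1$ unit ball at one of the vertices $\pm\mathbf{e}_j$, the inner supremum equals $\norm{\sum_{k=1}^\width\tau_k\vw_k}_\infty=\max_{1\leq j\leq n}\bigl|\sum_{k=1}^\width\tau_k w_{k,j}\bigr|$, where $w_{k,j}$ denotes the $j$-th coordinate of $\vw_k$. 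This step is just Hölder's inequality ($\ell^1$--$\ell^\infty$ duality) and needs no probabilistic input.

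Next I would rewrite $\max_{1\leq j\leq n}|a_j|=\max\{a_1,-a_1,\dots,a_n,-a_n\}$, so that the quantity inside the expectation is a maximum of $2n$ random variables of the form $\pm\sum_{k=1}^\width\tau_k w_{k,j}$. Each of these is centered, and since $\tau_k w_{k,j}\in[-|w_{k,j}|,|w_{k,j}|]$, Hoeffding's lemma shows it is sub-Gaussian with variance proxy $\sum_{k=1}^\width w_{k,j}^2\leq\width\max_{1\leq k\leq\width}\norm{\vw_k}_\infty^2$. Applying the maximal inequality for a finite set of centered sub-Gaussian variables (Massart's lemma), namely $\Exp[\max_{1\leq i\leq 2n}X_i]\leq\sqrt{2\sigma^2\log(2n)}$ with $\sigma^2=\width\max_k\norm{\vw_k}_\infty^2$, then gives
\[
\Rad_\Theta(\fG)\leq\frac{1}{\width}\sqrt{2\width\,\max_{1\leq k\leq\width}\norm{\vw_k}_\infty^2\,\log(2n)}=\max_{1\leq k\leq\width}\norm{\vw_k}_\infty\sqrt{\frac{2\log(2n)}{\width}},
\]
which is the claimed bound.

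I do not expect a genuine obstacle here, since this is a classical textbook fact; the only place that needs care is the maximal inequality, where I would verify that after the $\pm$ split all $2n$ variables are centered, that the variance proxy is $\sum_k w_{k,j}^2$ (rather than $\max_k w_{k,j}^2$), and that the logarithmic factor carries the cardinality $2n$ and not $n$. If one prefers a fully self-contained argument that does not invoke Massart's lemma by name, the same constant is recovered by bounding $\Exp[\max_i X_i]\leq\beta^{-1}\log\Exp\sum_i e^{\beta X_i}$ via Jensen's inequality, inserting the sub-Gaussian moment generating function bound $\Exp e^{\beta X_i}\leq e^{\beta^2\sigma^2/2}$, and optimizing over $\beta>0$.
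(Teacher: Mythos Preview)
Your proof is correct and follows the standard textbook argument: reduce the supremum over the $\ell^1$ ball to an $\ell^\infty$ norm via duality, then apply Massart's maximal inequality to the $2n$ centered sub-Gaussian sums. Note, however, that the paper does not supply its own proof of this lemma; it is quoted as a known result from \cite{Shalev-Shwartz2014}, so there is no in-paper argument to compare against---your write-up is exactly the derivation one finds in that reference.
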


Finally, let us state a general theorem concerning the Rademacher complexity and generalization gap of an arbitrary set of functions $\fF$ on an arbitrary domain $\fZ$, which is essentially given in \cite{Shalev-Shwartz2014}.
\begin{theo}[Rademacher complexity and generalization gap \cite{Shalev-Shwartz2014}]\label{thm..RademacherComplexityGeneralizationGap}
    Suppose that $f$'s in $\fF$ are non-negative and uniformly bounded, i.e., for any $f\in\fF$ and any $\vz\in\fZ$, $0\leq f(\vz)\leq B$. Then for any $\delta\in(0,1)$, with probability at least $1-\delta$ over the choice of $N$ i.i.d. random samples $S=\{\vz_1,\dots,\vz_N\}\subset\mathcal{Z}$, we have
    \begin{align*}
        \sup_{f\in\fF}\Abs{\frac{1}{N}\sum_{i=1}^N f(\vz_i)-\Exp_{\vz}f(\vz)}
        &\leq 2\Exp_{S}\Rad_{S}(\fF)+B\sqrt{\frac{\log(2/\delta)}{2N}},\\
        \sup_{f\in\fF}\Abs{\frac{1}{N}\sum_{i=1}^N f(\vz_i)-\Exp_{\vz}f(\vz)}
        &\leq 2\Rad_{S}(\fF)+3B\sqrt{\frac{\log(4/\delta)}{2N}}.
    \end{align*}
\end{theo}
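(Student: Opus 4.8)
The plan is to prove Theorem~\ref{thm..RademacherComplexityGeneralizationGap} along the classical route of \cite{Shalev-Shwartz2014}, assembled from three standard ingredients: a bounded-difference (McDiarmid) concentration inequality, a symmetrization argument with a ghost sample, and a union bound to collect the pieces with the advertised constants. Throughout, it is convenient to abbreviate $\Phi(S):=\sup_{f\in\fF}\bigl(\Exp_{\vz}f(\vz)-\tfrac1N\sum_{i=1}^N f(\vz_i)\bigr)$ and $\Psi(S):=\sup_{f\in\fF}\bigl(\tfrac1N\sum_{i=1}^N f(\vz_i)-\Exp_{\vz}f(\vz)\bigr)$, so that the quantity to be bounded equals $\max\{\Phi(S),\Psi(S)\}$, and the job splits into controlling $\Phi$ and $\Psi$.

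\textbf{Concentration and symmetrization.} First I would observe that, since $0\le f\le B$ for every $f\in\fF$, replacing a single coordinate $\vz_i$ of $S$ by an arbitrary $\vz_i'$ perturbs each of $\Phi(S)$, $\Psi(S)$, and $\Rad_S(\fF)$ by at most $B/N$ (for $\Rad_S$ the perturbation sits inside the expectation over the Rademacher variables, where one uses $|\sup_f(A_f+c_f)-\sup_f A_f|\le\sup_f|c_f|$). McDiarmid's inequality then yields, for any $\delta'\in(0,1)$ and for each of these three functionals separately, a bound by its own expectation plus $B\sqrt{\log(1/\delta')/(2N)}$ with probability at least $1-\delta'$. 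Next I would run the symmetrization step: introducing an independent ghost sample $S'=\{\vz_1',\dots,\vz_N'\}$ with the same law, writing $\Exp_{\vz}f(\vz)=\Exp_{S'}\tfrac1N\sum_i f(\vz_i')$, pulling the supremum out of the $S'$-expectation by Jensen's inequality, and inserting Rademacher signs $\tau_i$ (legitimate because $f(\vz_i')-f(\vz_i)$ is symmetric), one obtains $\Exp_S\Phi(S)\le\Exp_{S,S',\vtau}\sup_{f\in\fF}\tfrac1N\sum_i\tau_i\bigl(f(\vz_i')-f(\vz_i)\bigr)\le 2\,\Exp_S\Rad_S(\fF)$ after splitting the supremum over the two halves; since $\Rad_S(\fF)$ is invariant under $f\mapsto -f$, the same bound $\Exp_S\Psi(S)\le 2\,\Exp_S\Rad_S(\fF)$ holds verbatim.

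\textbf{Assembly.} For the first inequality I would apply the concentration step to $\Phi$ and to $\Psi$ each with $\delta'=\delta/2$ and take a union bound: with probability at least $1-\delta$, both are bounded by their expectations plus $B\sqrt{\log(2/\delta)/(2N)}$, hence by $2\Exp_S\Rad_S(\fF)+B\sqrt{\log(2/\delta)/(2N)}$ using symmetrization, and therefore so is their maximum. For the second inequality I would instead split the failure probability as $\delta/4,\delta/4,\delta/2$ across the three events for $\Phi$, $\Psi$, and $\Rad_S(\fF)$ respectively; on the intersection one has $\Exp_S\Rad_S(\fF)\le\Rad_S(\fF)+B\sqrt{\log(2/\delta)/(2N)}$ and $\max\{\Phi,\Psi\}\le 2\Exp_S\Rad_S(\fF)+B\sqrt{\log(4/\delta)/(2N)}$, which combine (using $\log(2/\delta)\le\log(4/\delta)$) to $2\Rad_S(\fF)+3B\sqrt{\log(4/\delta)/(2N)}$, the stated bound.

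\textbf{Main obstacle.} There is no genuine mathematical difficulty here: the result is textbook. The only points demanding care are (i) the two-sided supremum, which is why McDiarmid must be applied to $\Phi$ and $\Psi$ individually rather than directly to $\max\{\Phi,\Psi\}$ — the latter would incur a spurious factor of two in the Rademacher term through $\Exp_S\max\{\Phi,\Psi\}\le\Exp_S\Phi+\Exp_S\Psi$ — and (ii) the bookkeeping of the constants multiplying $\sqrt{\log(\cdot/\delta)/N}$, which is dictated entirely by how the $\delta$-budget is partitioned among the McDiarmid applications. In the write-up I would cite \cite{Shalev-Shwartz2014} for the precise constants and reproduce only the symmetrization step for completeness.
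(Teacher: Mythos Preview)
The paper does not supply its own proof of this theorem: it is stated as a quoted result ``essentially given in \cite{Shalev-Shwartz2014}'' and is used as a black box in the proof of Lemma~\ref{lem2}. Your proposal reproduces the standard textbook argument from that reference (McDiarmid for concentration, ghost-sample symmetrization for the expectation bound, and the appropriate $\delta$-splitting for the data-dependent version), and the constants you obtain match the statement; there is nothing to compare against and nothing to correct.
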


{Here, we should point out that the distribution of $\vz$ is arbitrary. In our specific application, $\mathbb{E}_{\vz} = \mathbb{E}_\pi$.}

\subsection{Notations and main ideas of the proof of Theorem \ref{thm:lcr}}
Now we are going to present the notations and main ideas of the proof of Theorem \ref{thm:lcr}. Recall that we use a two-layer neural network $\phi(\vx;\vtheta)$ with $\vtheta=\mathrm{vec}\{a_k,\vw_k\}_{k=1}^\width$. In the gradient descent iteration, we use $t$ to denote the iteration or the artificial time variable in the gradient flow. Hence, we define the following notations for the evolution of parameters at time $t$:
\begin{equation*}
     a^t_k 
     := a_k(t), \quad 
     \vw_k^t 
     := \vw_k(t),\quad
     \vtheta^t 
     := \vtheta(t) := \mathrm{vec}{\{a_k^t, \vw_k^t\}}_{k=1}^\width.
\end{equation*}
 Similarly, we can introduce $t$ to other functions or variables depending on $\vtheta(t)$. When the dependency of $t$ is clear, we will drop the index $t$. In the initialization of gradient descent, we set 
\begin{align}\begin{split}\label{eqn:init}
     &a^0_k 
     := a_k(0)\sim \mathcal{N}(0, \gamma^2), \quad 
     \vw_k^0 
     := \vw_k(0)\sim \mathcal{N}(\vzero, \mI_n),\\
     &\vtheta^0 
     := \vtheta(0) := \mathrm{vec}{\{a_k^0, \vw_k^0\}}_{k=1}^\width.
\end{split}\end{align}
Then the empirical risk can be written as
$\RS(\vtheta) = \frac{1}{2N}\ve^{\T} \mA^{\T}\mA \ve$, where we denote $e_i = \phi(\vx_i;\vtheta) - (\mA^{-1}f(X))_i$ for $i\in[N]$ and $\ve = (e_1, e_2, \ldots, e_N)^{\T}$. Hence, the gradient descent dynamics is $\dot{\vtheta} = -\nabla_{\vtheta}\RS(\vtheta)$, or equivalently in terms of $a_k$ and $\vw_k$ as follows:
\BEA
\begin{aligned}\label{componentwisegradflow}
    \dot{a}_k = -\nabla_{a_k}\RS(\vtheta) 
    &= -\frac{1}{N}\sum_{i=1}^N (e^{\T}\mA^{\T}\mA)_i \sigma(\vw_k^\T\vx_i),\\
    \dot{\vw}_k = -\nabla_{\vw_k}\RS(\vtheta) 
    &= -\frac{1}{N}\sum_{i=1}^N (e^{\T}\mA^{\T}\mA)_i a_k\sigma'(\vw_k^\T\vx_i)\vx_i.
\end{aligned}
\EEA

Adopting the neuron tangent kernel point of view \cite{Arthur18}, in the case of a two-layer neural network with an infinite width, the corresponding kernels $k^{(a)}$ for parameters in the last linear transform and $k^{(w)}$ for parameters in the first layer are functions from $M \times M$ to $\sR$ defined by
\begin{align}
    k^{(a)}(\vx,\vx') 
    &:= \Exp_{\vw\sim \mathcal{N}(\vzero,\mI_n)}g^{(a)}(\vw;\vx,\vx'),\nonumber\\
    k^{(w)}(\vx,\vx')
    &:= \Exp_{(a,\vw)\sim \mathcal{N}(\vzero,\mI_{n+1})}g^{(w)}(a,\vw;\vx,\vx'),\nonumber
\end{align}
where
\begin{align}
    g^{(a)}(\vw;\vx,\vx')
    &:= \left[\sigma(\vw^\T\vx)\right]\nonumber \cdot\left[\sigma(\vw^\T\vx')\right],\nonumber\\
    g^{(w)}(a,\vw;\vx,\vx')
    &:= a^2\big[\sigma'(\vw^\T\vx)\vx\big]\cdot
    \big[\sigma'(\vw^\T\vx')\vx'\big].\nonumber
\end{align}
These kernels evaluated at $N\times N$ pairs of samples lead to $N\times N$ Gram matrices $\mK^{(a)}$ and $\mK^{(\vw)}$ with $K^{(a)}_{ij} = k^{(a)}(\vx_i,\vx_j)$ and $K^{(w)}_{ij} = k^{(w)}(\vx_i,\vx_j)$, respectively. Our analysis requires the matrix $\mK^{(a)}$ to be positive definite, which has been verified for regression problems under mild conditions on random training data $X=\{\vx_i\}_{i=1}^N$ and can be generalized to our case. Hence, we assume this together with the non-singularity of $\mA$.

For a two-layer neural network with $\width$ neurons, define the $N\times N$ Gram matrix $\mG^{(a)}(\vtheta)$ and $\mG^{(w)}(\vtheta)$ using the following expressions for the $(i,j)$-th entry
\BEA\label{Grammian_G}
\begin{aligned}
    \mG_{ij}^{(a)}(\vtheta) 
    &:= \frac{1}{\width}\sum_{k=1}^\width g^{(a)}(\vw_k;\vx_i,\vx_j),\\
    \mG_{ij}^{(w)}(\vtheta)
    &:= \frac{1}{\width}\sum_{k=1}^\width g^{(w)}(a_k,\vw_k;\vx_i,\vx_j).
\end{aligned}
\EEA
Clearly, $\mG^{(a)}(\vtheta)$ and $\mG^{(w)}(\vtheta)$ are both positive semi-definite for any $\vtheta$. Let $\mG(\vtheta)=\mG^{(a)}(\vtheta)+\mG^{(w)}(\vtheta)$, {taking time derivative of \eqref{eqn:emloss4} and using the equalities in \eqref{componentwisegradflow} and \eqref{Grammian_G}}, then we have the following evolution equation to understand the dynamics of the gradient descent method applied to \eqref{eqn:emloss4}:
\begin{align}\begin{split}\label{eqn:boundS}
        \frac{\D}{\D t}\RS(\vtheta) = - \norm{\nabla_{\vtheta}\RS(\vtheta)}^2_2 &=-\frac{\width}{N^2}\ve^{\T}\mA^{\T}\mA  \mG(\vtheta) \mA^{\T}\mA  \ve \\&
        \leq - \frac{\width}{N^2}\ve^{\T} \mA^{\T}\mA  \mG^{(a)}(\vtheta) \mA^{\T}\mA \ve.
\end{split}\end{align}

Our goal is to show that $\RS(\vtheta)$ converges to zero. These goals are true if the smallest eigenvalue $    \lambda_{\min}\left(\mG^{(a)}(\vtheta)\right)$ of $\mG^{(a)}(\vtheta)$ has a positive lower bound uniformly in $t$, since in this case we can solve \eqref{eqn:boundS} and bound $\RS(\vtheta)$ with a function in $t$ converging to zero when $t\rightarrow\infty$ as shown in Lemma \ref{lem:exp_RS}. In fact, a uniform lower bound of $    \lambda_{\min}\left(\mG^{(a)}(\vtheta)\right)$ can be $\frac{1}{2}\lambda_S$, which can be proved in the following three steps:

\begin{itemize}
    \item (\textbf{Initial phase}) By Assumption \ref{assump..LambdaMin} of $\mK^{(a)}$, we can show that $    \lambda_{\min}\left(\mG^{(a)}(\vtheta(0))\right)\approx \lambda_S$ in Lemma \ref{lem:lambda_min} using the observation that $\mK^{(a)}_{ij}$ is the mean of $g(\vw;\vx_i,\vx_j)$ over the normal random variable $\vw$, while $\mG^{(a)}_{ij}(\vtheta(0))$ is the mean of $g(\vw;\vx_i,\vx_j)$ with $\width$ independent realizations. 
    \item (\textbf{Evolution phase}) The GD dynamics results in $\vtheta(t)\approx \vtheta(0)$ when over-parametrized as shown in Lemma \ref{prop:a_w}, meaning that
    $\lambda_{\min}\left(\mG^{(a)}(\vtheta(0))\right)\approx \lambda_{\min}\left(\mG^{(a)}(\vtheta(t))\right)$.
    \item (\textbf{Final phase}) To show the uniform bound $\lambda_{\min}\left(\mG^{(a)}(\vtheta(t))\right)\geq \frac{1}{2}\lambda_S$ for all $t\geq 0$, we introduce a stopping time $t^*$ via
\begin{equation}\label{eqn:ts}
    t^* = \inf\{t \mid \vtheta(t)\notin \mathcal{M}(\vtheta^0)\},
\end{equation}
where $\mathcal{M}(\vtheta^0) := \left\{\vtheta \mid \norm{\mG^{(a)}(\vtheta) - \mG^{(a)}(\vtheta^0)}_{\mathrm{F}}\leq \frac{1}{4}\lambda_S\right\}$, and show that $t^*$ is equal to infinity in the final proof of Theorem \ref{thm:lcr}.
\end{itemize}

\subsection{Important lemmas for Theorem \ref{thm:lcr}}

Now we are going to prove several lemmas for Theorem \ref{thm:lcr}. In the analysis below, we use $ \bar{a}^t_{k} := \bar{a}_{k}(t):=\gamma^{-1}a_k(t)$ with $0<\gamma<1$, e.g., $\gamma=\frac{1}{\sqrt{\width}}$ or $\gamma=\frac{1}{\width}$, and $\bar{\vtheta}(t):= \mathrm{vec}{\{\bar{a}_k^t, \vw_k^t\}}_{k=1}^\width$.

\begin{lem}\label{lem1}
    For any $\delta\in(0,1)$ with probability at least $1-\delta$ over the random initialization in \eqref{eqn:init}, we have
    \begin{equation}\label{eqn:lem1}
        \begin{aligned}
            \max\limits_{k\in[\width]}\left\{\abs{\bar{a}_k^0},\; \norm{\vw^0_k}_\infty\right\}
             & \leq\sqrt{2
            \log\frac{2\width(n+1)}{\delta}},      \\
            \max\limits_{k\in[\width]}\left\{\abs{a_k^0}\right\}
             & \leq \gamma\sqrt{2
                \log\frac{2\width(n+1)}{\delta}}.
        \end{aligned}
    \end{equation}
\end{lem}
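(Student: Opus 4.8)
The plan is to establish each bound as a straightforward consequence of a Gaussian tail estimate combined with a union bound over the $\width(n+1)$ scalar Gaussians appearing in the initialization \eqref{eqn:init}. First I would recall the standard sub-Gaussian tail inequality: if $Z\sim\mathcal{N}(0,1)$, then $\Prob(\abs{Z}>t)\leq 2\exp(-t^2/2)$ for all $t>0$. By construction, each coordinate of $\vw_k^0$ is an i.i.d. standard normal, and $\bar a_k^0 = \gamma^{-1}a_k^0$ is also standard normal because $a_k^0\sim\mathcal{N}(0,\gamma^2)$. Thus the collection $\{\bar a_k^0\}_{k=1}^\width \cup \{(\vw_k^0)_j\}_{k\in[\width],\,j\in[n]}$ consists of exactly $\width(n+1)$ standard normal random variables (not necessarily independent of one another across the two groups, but that does not matter for a union bound).

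Next I would apply the union bound: for any threshold $t>0$,
\begin{equation*}
\Prob\Big(\max_{k\in[\width]}\big\{\abs{\bar a_k^0},\,\norm{\vw_k^0}_\infty\big\} > t\Big)
\leq \width(n+1)\cdot 2\exp(-t^2/2).
\end{equation*}
Setting the right-hand side equal to $\delta$ and solving gives $t = \sqrt{2\log\frac{2\width(n+1)}{\delta}}$, which yields the first inequality in \eqref{eqn:lem1} with probability at least $1-\delta$. The second inequality follows immediately on the same event: since $a_k^0 = \gamma\bar a_k^0$, we have $\max_k\abs{a_k^0} = \gamma\max_k\abs{\bar a_k^0}\leq \gamma\sqrt{2\log\frac{2\width(n+1)}{\delta}}$.

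There is essentially no real obstacle here — the lemma is a routine concentration statement, and the only mild subtlety is bookkeeping the count of Gaussians correctly (each neuron contributes one scalar $\bar a_k^0$ plus the $n$ entries of $\vw_k^0$, giving $\width(n+1)$ total) so that the logarithmic factor comes out as $\log\frac{2\width(n+1)}{\delta}$ rather than something slightly different. I would also note in passing that no independence between the $a_k^0$'s and the $\vw_k^0$'s is needed for this argument; the union bound is agnostic to the joint law. This lemma will then feed into the subsequent lemmas (e.g. Lemma \ref{lem:lambda_min} and Lemma \ref{prop:a_w}) where one needs an almost-sure-type control on the magnitude of the initial parameters to bound perturbations of the Gram matrix $\mG^{(a)}$ along the gradient flow.
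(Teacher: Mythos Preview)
Your proposal is correct and follows essentially the same approach as the paper: apply the standard Gaussian tail bound $\Prob(|Z|>t)\leq 2e^{-t^2/2}$, take a union bound over the $\width(n+1)$ scalar standard normals comprising $\{\bar a_k^0\}$ and the entries of $\{\vw_k^0\}$, and solve for $t$ to make the failure probability equal to $\delta$. The second inequality is obtained, exactly as you do, by multiplying the bound on $\bar a_k^0$ by $\gamma$.
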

\begin{proof}
    If $\rX \sim \mathcal{N}(0, 1)$, then $\Prob(\abs{\rX} > \eps) \leq 2\E^{-\frac{1}{2}\eps^2}$ for all $\eps > 0$. Since $\bar{a}^0_k\sim \mathcal{N}(0,1)$, ${(\vw_k^0)}_{\alpha}\sim \mathcal{N}(0,1)$ for $k\in[\width], \alpha \in[n]$, and they are all independent, by setting
    \begin{equation*}
        \eps = \sqrt{2\log\frac{2\width(n+1)}{\delta}},
    \end{equation*}
    one can obtain
    \begin{equation*}
        \begin{aligned}
            \Prob\left(\max\limits_{k\in[\width]}\left\{\abs{\bar{a}_k^0},\norm{\vw^0_k}_\infty\right \}>\eps\right)
             & = \Prob\left(\left(\bigcup\limits_{k\in[\width]}\left\{\abs{\bar{a}_k^0}>\eps\right\}\right)\bigcup\left(\bigcup\limits_{k\in[\width],\alpha\in[n]}\left\{\abs{{(\vw_k^0)}_{\alpha}}>\eps\right\}\right)\right) \\
             & \leq \sum_{k=1}^\width \Prob\left(\abs{\bar{a}_k^0}>\eps\right) + \sum_{k=1}^\width\sum_{\alpha=1}^n \Prob\left(\abs{{(\vw^0_k)}_{\alpha}}>\eps\right)                             \\
             & \leq 2\width \e^{-\frac{1}{2}\eps^2} + 2\width n \e^{-\frac{1}{2}\eps^2}                                                                                                       \\
             & = 2\width(n+1)\e^{-\frac{1}{2}\eps^2}                                                                                                                                    \\
             & = \delta,
        \end{aligned}
    \end{equation*}
    which implies the conclusions of this lemma.
\end{proof}

\begin{lem}\label{lem2}
    For any $\delta\in(0,1)$ with probability at least $1-\delta$ over the random initialization in \eqref{eqn:init}, we have
    \begin{equation*}
        \RS(\vtheta^0)\leq\frac{1}{2}\left(1 + 3\gamma n^r \sqrt{\width}\|\mA\|_2\left(2\log\frac{4\width(n+1)}{\delta}\right)^{(r+1)/2}\left(r\sqrt{2\log(2n)}+\sqrt{\log(8/\delta)/{2}}\right)\right)^2,
    \end{equation*}
\end{lem}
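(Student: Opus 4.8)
\noindent\textbf{Proof proposal for Lemma~\ref{lem2}.}
The plan is to reduce the bound on $\RS(\vtheta^0)$ to a bound on $\sup_{\vx\in[0,1]^n}|\phi(\vx;\vtheta^0)|$, and then to control the latter by a symmetrization argument built on Lemmas~\ref{lem..RademacherComplexityContraction}, \ref{lem..linear} and Theorem~\ref{thm..RademacherComplexityGeneralizationGap}. For the reduction: since $\RS(\vtheta^0)=\tfrac{1}{2N}\|\mA\phi(X;\vtheta^0)-f(X)\|_2^2$, the triangle inequality together with $\|\mA\phi(X;\vtheta^0)\|_2\le\|\mA\|_2\sqrt{N}\max_{i\in[N]}|\phi(\vx_i;\vtheta^0)|$ and $\|f(X)\|_2\le\sqrt N$ (from $|f|\le1$) gives
\[
\RS(\vtheta^0)\le\tfrac12\Bigl(1+\|\mA\|_2\sup_{\vx\in[0,1]^n}|\phi(\vx;\vtheta^0)|\Bigr)^2 ,
\]
where I have enlarged $\max_{i}$ to the supremum over the cube $[0,1]^n\supseteq X$; this is exactly what makes the final bound free of $N$. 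It therefore suffices to prove that, with probability at least $1-\delta$, $\sup_{\vx\in[0,1]^n}|\phi(\vx;\vtheta^0)|\le 3\gamma n^r\sqrt{\width}\,R^{\,r+1}\bigl(r\sqrt{2\log(2n)}+\sqrt{\log(8/\delta)/2}\bigr)$ with $R:=\sqrt{2\log\bigl(4\width(n+1)/\delta\bigr)}$.

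The next step is to truncate the parameters. Applying Lemma~\ref{lem1} at confidence level $\delta/2$ places us, with probability at least $1-\delta/2$, on an event $E$ (depending only on the magnitudes $|a_k^0|$ and on $\vw_k^0$) on which $\max_k\{|\bar a_k^0|,\|\vw_k^0\|_\infty\}\le R$, hence $|a_k^0|\le\gamma R$ and $|(\vw_k^0)^{\T}\vx|\le\|\vw_k^0\|_\infty\|\vx\|_1\le nR$ for all $\vx\in[0,1]^n$. On $E$ the arguments of $\sigma=\mathrm{ReLU}^r$ that occur lie in $[-nR,nR]$, on which $\sigma$ may be taken $r(nR)^{r-1}$-Lipschitz (clipping $\sigma$ outside this window changes nothing in $\phi$ over $[0,1]^n$), and each summand of $\phi$ is bounded in absolute value by $B:=\gamma R(nR)^r=\gamma n^rR^{r+1}$.

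Now I symmetrize and peel. Write $a_k^0=|a_k^0|\tau_k$ with $\tau_k:=\mathrm{sign}(a_k^0)$; conditionally on the magnitudes the $\tau_k$ are i.i.d.\ Rademacher, so $\phi(\cdot;\vtheta^0)=\sum_{k=1}^\width \tau_k |a_k^0|\sigma((\vw_k^0)^{\T}\vx)$ is a Rademacher average over $\vx\in[0,1]^n$. Since $\phi(\mathbf{0};\vtheta^0)=0$, a standard symmetrization bound gives $\Exp_{\vtau}\sup_{\vx}|\phi(\vx;\vtheta^0)|\le 2\width\,\Rad_{\{\vw_k^0\}}(\mathcal F)$, where $\mathcal F$ is obtained from the linear class $\mathcal G=\{\vw\mapsto\vw^{\T}\vx:\vx\in[0,1]^n\}$ by composing the $k$-th coordinate with $t\mapsto|a_k^0|\sigma(t)$. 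Lemma~\ref{lem..RademacherComplexityContraction}, applied with these coordinatewise maps (each $\gamma R\cdot r(nR)^{r-1}$-Lipschitz on $E$), reduces $\mathcal F$ to $\mathcal G$; and Lemma~\ref{lem..linear}, rescaled to the $\ell^1$-ball of radius $n$ so that $[0,1]^n$ is covered, bounds the Rademacher complexity of $\mathcal G$ on the sample $\{\vw_k^0\}$ by $nR\sqrt{2\log(2n)/\width}$. Chaining these estimates yields $\Exp_{\vtau}\sup_{\vx}|\phi(\vx;\vtheta^0)|\le 2\gamma n^rR^{\,r+1}\sqrt{\width}\,r\sqrt{2\log(2n)}$. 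Finally, regarding $\sup_{\vx}|\phi(\vx;\vtheta^0)|$ as a function of the $\width$ signs, a single sign flip changes it by at most $2\gamma B$, so McDiarmid's bounded-differences inequality (the deviation device underlying Theorem~\ref{thm..RademacherComplexityGeneralizationGap}), used with failure probability $\delta/2$, adds a term of order $\gamma n^rR^{r+1}\sqrt{\width}\,\sqrt{\log(8/\delta)/2}$. Summing the two contributions and using $2\le3$ gives the claimed bound on $\sup_{\vx}|\phi(\vx;\vtheta^0)|$ with conditional probability $\ge1-\delta/2$; intersecting with $E$ costs a further $\delta/2$, so the estimate holds with probability $\ge(1-\delta/2)^2\ge1-\delta$, and substituting it into the reduction above completes the proof.

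I expect the main obstacle to be obtaining the $\sqrt{\width}$ dependence (rather than $\width$) in the bound on $\sup_{\vx}|\phi(\vx;\vtheta^0)|$: a naive triangle inequality over the $\width$ neurons is off by a factor $\sqrt{\width}$, so the cancellation coming from the random signs of $a_k^0$ must be exploited through symmetrization and the contraction lemma. The accompanying technicalities are that $\mathrm{ReLU}^r$ is not globally Lipschitz, which forces the a priori truncation via Lemma~\ref{lem1} before the contraction step, and that the symmetrization and concentration steps must be carried out conditionally on the (random, possibly large) magnitudes $|a_k^0|,\|\vw_k^0\|_\infty$ so that the two failure events combine cleanly into the single probability $1-\delta$.
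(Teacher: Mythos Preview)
Your proposal is correct and takes essentially the same approach as the paper: reduce to bounding $\sup_{\vx}|\phi(\vx;\vtheta^0)|$, truncate the parameters via Lemma~\ref{lem1} at level $\delta/2$, and then control the supremum by combining the contraction lemma (Lemma~\ref{lem..RademacherComplexityContraction}) with the linear-predictor Rademacher bound (Lemma~\ref{lem..linear}) and a concentration step. The only difference is in how the Rademacher structure is accessed: the paper packages the last two steps into a single application of Theorem~\ref{thm..RademacherComplexityGeneralizationGap} by treating the $\width$ neurons $(\bar a_k^0,\vw_k^0)$ as i.i.d.\ samples from $\mathcal N(\vzero,\mI_{n+1})$ and the $\vx$-indexed class $\mathcal H=\{(\bar a,\vw)\mapsto \bar a\,\sigma(\vw^\T\vx)\}$ as the hypothesis class (using $\Exp[\bar a\,\sigma(\vw^\T\vx)]=0$), whereas you factor $a_k^0=|a_k^0|\tau_k$ and run the Rademacher-plus-McDiarmid argument conditionally on the magnitudes; both routes yield the same bound up to the harmless constant slack you note.
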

\begin{proof}
    From Lemma~\ref{lem1} we know that with probability at least $1-\delta/2$,
    \begin{equation*}
        \abs{\bar{a}_k^0} \leq \sqrt{2\log\frac{4\width(n+1)}{\delta}} \quad \text{and} \quad \norm{\vw_k^0}_1 \leq n\sqrt{2\log\frac{4\width(n+1)}{\delta}}.
    \end{equation*}
        Let
    \begin{equation*}
        \fH = \{h(\bar{a},\vw;\vx) \mid h(\bar{a},\vw;\vx) = \bar{a}\sigma(\vw^\T\vx),\vx\in\Omega \}.
    \end{equation*}
    Each element in the above set is a function of $\bar{a}$ and $\vw$ while $\vx\in [0,1]^n$ is a parameter. 
    Since $\norm{\vx}_\infty\leq 1$, we have
    \begin{equation*}
        \begin{aligned}
            \abs{h(\bar{a}^0_k,\vw^0_k;\vx)}
              \leq \abs{\bar{a}_k^0} \norm{\vw_k^0}_1^r  \leq n^r \left(2{\log \frac{4\width(n+1)}{\delta}}\right)^{(r+1)/2}.
        \end{aligned}
    \end{equation*}
    Then with probability at least $1-\delta/2$, by the Rademacher-based uniform convergence theorem, we have
    \begin{equation*}
        \begin{aligned}
            \frac{1}{\gamma \width}\sup_{\vx\in\Omega}\abs{\phi(\vx;\vtheta^0)}
             & = \sup_{\vx\in\Omega}\Abs{\frac{1}{\width}\sum_{k=1}^\width h(\bar{a}^0_k,\vw^0_k;\vx)-\Exp_{(\bar{a},\vw)\sim \mathcal{N}(0,\mI_{n+1})}h(\bar{a},\vw;\vx)}\\
             & \leq 2\Rad_{\bar{\vtheta}^0}(\fH) + 3n^r \left( 2{\log \frac{4\width(n+1)}{\delta}}\right)^{(r+1)/2} \sqrt{\frac{\log(8/\delta)}{{2}\width}},
        \end{aligned}
    \end{equation*}
    where
    \begin{align*}
        \Rad_{\bar{\vtheta}^0}(\fH) 
        := \frac{1}{\width}\Exp_{\vtau}\left[\sup_{\vx\in\Omega}\sum_{k=1}^\width\tau_k h(\bar{a}_k^0,\vw_k^0;\vx)\right]
        = \frac{1}{\width}\Exp_{\vtau}\left[\sup_{\vx\in\Omega}\sum_{k=1}^\width\tau_k \bar{a}_k^0 \sigma(\vw^{0\T}_k\vx)
        \right],
    \end{align*}
    where $\vtau$ is a random vector in $\sN^\width$ with i.i.d. entries $\{\tau_k\}_{k=1}^\width$ following the Rademacher distribution. With probability at least $1-\delta/2$, $\psi_k(y_k)=\bar{a}_k\sigma(y_k)$ for $k\in[\width]$ is a Lipschitz continuous function with a Lipschitz constant 
    \[
    r n^{r-1}\left(2\log\frac{4\width(n+1)}{\delta}\right)^{r/2}
    \] 
    when $y_k\in[-n\sqrt{2\log(4\width(n+1)/\delta)},n\sqrt{2\log(4\width(n+1)/\delta)}]$. We continuously extend $\psi_k(y_k)$ to the domain $\mathbb{R}$ with the same Lipschitz constant.
    
    Applying Lemma \ref{lem..RademacherComplexityContraction} with $\psi_k(y_k)$, we have
    \begin{eqnarray}
        \frac{1}{\width}\Exp_{\vtau}\left[\sup_{\vx\in\Omega}\sum_{k=1}^\width\tau_k \bar{a}_k^0 \sigma(\vw^{0\T}_k\vx)
        \right]
       & \leq & \frac{1}{\width} r n^{r-1}\left(2\log\frac{4\width(n+1)}{\delta}\right)^{r/2}\Exp_{\vtau}\left[\sup_{\vx\in\Omega}\sum_{k=1}^\width\tau_k \vw^{0\T}_k\vx\right]\nonumber\\
       &\leq & \frac{r n^{r}\sqrt{2\log(2n)}}{\sqrt{\width}} \left(2\log\frac{4\width(n+1)}{\delta}\right)^{(r+1)/2},
        \label{eq..InitialI_1Contraction}
    \end{eqnarray}
    where the second inequality is by the Rademacher bound for linear predictors in Lemma \ref{lem..linear}. 
   
    So one can get
    \begin{align*}
        \sup_{\vx\in\Omega}\abs{\phi(\vx;\vtheta^0)} &\leq \gamma \sqrt{m}n^r\left(2\log\frac{4\width(n+1)}{\delta}\right)^{(r+1)/2}\left( 2r \sqrt{2\log(2n)}+3\sqrt{\log(8/\delta){/2}}\right)\\
        &\leq 3\gamma\sqrt{\width}n^r\left(2\log\frac{4\width(n+1)}{\delta}\right)^{(r+1)/2}\left(r \sqrt{2\log(2n)}+\sqrt{\log(8/\delta){/2}}\right).
    \end{align*}
    Then
    \begin{equation*}
        \begin{aligned}
            \RS(\vtheta^0)
             & \leq \frac{1}{2N} \left( \|\mA\|_2\|\phi(S;\bmtheta^0)\|_2+\sqrt{N} \right)^2
             \\
             & \leq \frac{1}{2}\left(1 + 3\gamma\sqrt{\width}n^r\|\mA\|_2\left(2\log\frac{4\width(n+1)}{\delta}\right)^{(r+1)/2}\left(r \sqrt{2\log(2n)}+\sqrt{\log(8/\delta){/2}}\right)\right)^2,
        \end{aligned}
    \end{equation*}
    where the first inequality comes from the fact that $|f|\leq 1$ by our assumption of the target function.
\end{proof}


The following lemma shows the positive definiteness of $\mG^{(a)}$ at initialization.

\begin{lem}\label{lem:lambda_min}
    For any $\delta\in(0,1)$, if $\width\geq\frac{16N^4C_n}{\lambda_S^2\delta}$, then with probability at least $1-\delta$ over the random initialization in \eqref{eqn:init}, we have
    \begin{equation*}
        \lambda_{\min}\left(\mG^{(a)}(\vtheta^0)\right)\geq\frac{3}{4}\lambda_S,
    \end{equation*}
    where $C_n := \Exp\norm{\vw}_1^{4r}<+\infty$ with $\vw\sim \mathcal{N}(\vzero,\mI_n)$.
\end{lem}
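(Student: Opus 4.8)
The matrix $\mG^{(a)}(\vtheta^0)$ is, entrywise, a Monte-Carlo average of $\width$ i.i.d.\ copies of the symmetric kernel $g^{(a)}(\vw;\vx_i,\vx_j)=\sigma(\vw^\T\vx_i)\sigma(\vw^\T\vx_j)$ evaluated at the initialization weights $\vw^0_k\sim\mathcal{N}(\vzero,\mI_n)$, and its expectation is exactly $\mK^{(a)}$ by the definition of $k^{(a)}$. So the plan is: (i) control the fluctuation $\mG^{(a)}(\vtheta^0)-\mK^{(a)}$ in operator norm via a second-moment (Markov) argument, and (ii) invoke Weyl's perturbation inequality together with Assumption~\ref{assump..LambdaMin} to transfer the spectral lower bound $\lambda_{\min}(\mK^{(a)})\ge\lambda_S$ to $\mG^{(a)}(\vtheta^0)$.

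For step (i), I would first bound the second moment of a single summand. Since $\sigma=\mathrm{ReLU}^r$ satisfies $|\sigma(t)|\le|t|^r$ and $|\vw^\T\vx_i|\le\norm{\vw}_1\norm{\vx_i}_\infty\le\norm{\vw}_1$ (because $\vx_i\in[0,1]^n$), one gets $g^{(a)}(\vw;\vx_i,\vx_j)^2\le\norm{\vw}_1^{4r}$, hence $\Exp_{\vw\sim\mathcal{N}(\vzero,\mI_n)}g^{(a)}(\vw;\vx_i,\vx_j)^2\le\Exp\norm{\vw}_1^{4r}=C_n<\infty$ (finite because Gaussians have finite moments of every order). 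As the $\vw^0_k$ are i.i.d., the variance of the average is $1/\width$ times the variance of one term, so
\begin{equation*}
\Exp\bigl(\mG^{(a)}_{ij}(\vtheta^0)-\mK^{(a)}_{ij}\bigr)^2
=\frac{1}{\width}\,\mathrm{Var}\bigl(g^{(a)}(\vw;\vx_i,\vx_j)\bigr)\le\frac{C_n}{\width}.
\end{equation*}
By Markov's inequality, $\Prob\bigl(|\mG^{(a)}_{ij}(\vtheta^0)-\mK^{(a)}_{ij}|>\tfrac{\lambda_S}{4N}\bigr)\le\tfrac{16N^2C_n}{\lambda_S^2\width}$ for each pair $(i,j)$, and a union bound over the $N^2$ pairs shows that, with probability at least $1-\tfrac{16N^4C_n}{\lambda_S^2\width}\ge 1-\delta$ (using $\width\ge 16N^4C_n/(\lambda_S^2\delta)$), every entry of $\mG^{(a)}(\vtheta^0)-\mK^{(a)}$ has absolute value at most $\lambda_S/(4N)$. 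On this event $\norm{\mG^{(a)}(\vtheta^0)-\mK^{(a)}}_2\le\norm{\mG^{(a)}(\vtheta^0)-\mK^{(a)}}_{\mathrm F}\le\sqrt{N^2\cdot\lambda_S^2/(16N^2)}=\lambda_S/4$.

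For step (ii), both $\mG^{(a)}(\vtheta^0)$ and $\mK^{(a)}$ are symmetric, so Weyl's inequality gives $\lambda_{\min}(\mG^{(a)}(\vtheta^0))\ge\lambda_{\min}(\mK^{(a)})-\norm{\mG^{(a)}(\vtheta^0)-\mK^{(a)}}_2\ge\lambda_S-\lambda_S/4=\tfrac34\lambda_S$ on the good event, which is exactly the claim. The argument is essentially a routine concentration estimate for a random Gram matrix; the only place needing a bit of care is the bookkeeping that produces the stated width $16N^4C_n/(\lambda_S^2\delta)$ — namely, thresholding each entry at $\lambda_S/(4N)$ (rather than applying Markov directly to the Frobenius norm, which would be lossier) so that the entrywise deviations, after a union bound over all $N^2$ entries, still assemble into a Frobenius-norm bound of $\lambda_S/4$. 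A matrix Bernstein/matrix Chebyshev inequality would give a better $N$-dependence, but the crude route above is enough for Theorem~\ref{thm:lcr}.
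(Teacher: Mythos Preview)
Your proposal is correct and follows essentially the same route as the paper's proof: entrywise Chebyshev/Markov with the second-moment bound $\Exp g^{(a)}(\vw;\vx_i,\vx_j)^2\le C_n$, a union bound over the $N^2$ entries to get $\|\mG^{(a)}(\vtheta^0)-\mK^{(a)}\|_{\mathrm F}\le\lambda_S/4$, and then an eigenvalue perturbation argument (the paper writes out the variational form of Weyl's inequality explicitly). The only cosmetic difference is that the paper phrases the union bound as $(1-p)^{N^2}\ge 1-N^2 p$, whereas your direct union bound is actually the cleaner justification since the events $\Omega_{ij}$ are not independent.
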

\begin{proof}
    We define $\Omega_{ij} := \{\vtheta^0 \mid \lvert \mG^{(a)}_{ij}(\vtheta^0) - \mK^{(a)}_{ij}\rvert \leq \frac{\lambda_S}{4N}\}$.
    Note that
    \begin{equation*}
        \abs{g^{(a)}(\vw_k^0;\vx_i,\vx_j)}  \leq  \norm{\vw_k^0}^{2r}_1.
    \end{equation*}
    So
    \begin{equation*}
        \Var\left(g^{(a)}(\vw_k^0;\vx_i,\vx_j)\right) \leq \Exp\left(g^{(a)}(\vw_k^0;\vx_i,\vx_j)\right)^2 \leq \Exp\norm{\vw_k^0}^{4r}_1 = C_n,
    \end{equation*}
    and
    \begin{equation*}
        \Var\left(\mG_{ij}^{(a)}(\vtheta^0)\right) = \frac{1}{\width^2}\sum_{k=1}^{\width}\Var\left(g^{(a)}(\vw_k^0;\vx_i,\vx_j)\right) \leq \frac{ C_n}{\width}.
    \end{equation*}
    Then the probability of the event $\Omega_{ij}$ has the lower bound:
    \begin{equation*}
        \Prob(\Omega_{ij}) \geq 1 - \frac{\Var\left(\mG_{ij}^{(a)}(\vtheta^0)\right)}{[\lambda_S/(4N)]^2} \geq 1 - \frac{16N^2C_n}{\lambda_S^2\width}.
    \end{equation*}
    Thus, with probability at least $\left(1 - \frac{16N^2C_n}{\lambda_S^2\width}\right)^{N^2} \geq 1 - \frac{16N^4C_n}{\lambda_S^2\width}$, we have all events $\Omega_{ij}$ for $i,j\in[N]$ to occur. This implies that with probability at least $1 - \frac{16N^4C_n}{\lambda_S^2\width}$, we have 
    \begin{equation*}
        \norm{\mG^{(a)}(\vtheta^0) - \mK^{(a)}}_{\mathrm{F}} \leq \frac{\lambda_S}{4}.
    \end{equation*}
    Note that $\mG^{(a)}(\vtheta^0)$ and $\mK^{(a)}$ are positive semi-definite normal matrices. Let $\bm{v}$ be the singular vector of $\mG^{(a)}(\vtheta^0)$ corresponding to the smallest singular value, then 
    \begin{equation*}
        \lambda_{\min}\left(\mG^{(a)}(\vtheta^0)\right) = \bm{v}^{\T} \mG^{(a)}(\vtheta^0)\bm{v}= \bm{v}^{\T} \mK^{(a)}\bm{v}  +\bm{v}^{\T} (\mG^{(a)} (\vtheta^0)- \mK^{(a)})\bm{v}  \geq \lambda_S - \norm{\mG^{(a)} (\vtheta^0)- \mK^{(a)}}_{2} \geq \lambda_S - \norm{\mG^{(a)} (\vtheta^0)- \mK^{(a)}}_{\mathrm{F}}.
    \end{equation*}
    
    So 
    \begin{equation*}
        \lambda_{\min}\left(\mG^{(a)}(\vtheta^0)\right)   \geq \lambda_S - \norm{\mG^{(a)} (\vtheta^0)- \mK^{(a)}}_{\mathrm{F}} \geq \frac{3}{4}\lambda_S.
    \end{equation*}

    For any $\delta\in(0,1)$, if $\width\geq\frac{16N^4 C_n}{\lambda_S^2\delta}$, then with probability at least $1-\frac{16N^4C_n}{\lambda_S^2\width}\geq 1-\delta$ over the initialization $\vtheta^0$, we have $\lambda_{\min}\left(\mG^{(a)}(\vtheta^0)\right) \geq \frac{3}{4}\lambda_S$.
\end{proof}

The following lemma estimates the empirical loss dynamics before the stopping time $t^*$ in \eqref{eqn:ts}.

\begin{lem}\label{lem:exp_RS}
    For any $\delta\in(0,1)$, if $\width\geq\frac{16N^4 C_n}{\lambda_S^2\delta}$, then with probability at least $1-\delta$ over the random initialization in \eqref{eqn:init}, we have for any $t\in[0, t^*)$
    \begin{equation*}
        \RS(\vtheta(t)) \leq \exp\left(-\frac{\width\lambda_S \lambda_{\mA}   t}{N}\right)\RS(\vtheta^0).
    \end{equation*}
\end{lem}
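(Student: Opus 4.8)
The plan is to integrate the differential inequality \eqref{eqn:boundS} once a uniform positive lower bound on $\lambda_{\min}\bigl(\mG^{(a)}(\vtheta(t))\bigr)$ is available on the whole interval $[0,t^*)$, and then invoke Gr\"onwall's inequality. Throughout I would work on the event of probability at least $1-\delta$ on which Lemma~\ref{lem:lambda_min} holds; under the width hypothesis $\width\geq 16N^4C_n/(\lambda_S^2\delta)$ this already gives $\lambda_{\min}\bigl(\mG^{(a)}(\vtheta^0)\bigr)\geq\tfrac34\lambda_S$.

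First I would derive the uniform spectral bound on $[0,t^*)$. For $t\in[0,t^*)$, the definition of the stopping time \eqref{eqn:ts} together with \eqref{eqn:Mtheta} forces $\vtheta(t)\in\mathcal{M}(\vtheta^0)$, i.e.\ $\norm{\mG^{(a)}(\vtheta(t))-\mG^{(a)}(\vtheta^0)}_{\mathrm{F}}\leq\tfrac14\lambda_S$. Since the two Gram matrices are symmetric and the spectral norm is dominated by the Frobenius norm, Weyl's perturbation inequality would give
\[
\lambda_{\min}\bigl(\mG^{(a)}(\vtheta(t))\bigr)\ \geq\ \lambda_{\min}\bigl(\mG^{(a)}(\vtheta^0)\bigr)-\norm{\mG^{(a)}(\vtheta(t))-\mG^{(a)}(\vtheta^0)}_2\ \geq\ \tfrac34\lambda_S-\tfrac14\lambda_S\ =\ \tfrac12\lambda_S ,
\]
so $\mG^{(a)}(\vtheta(t))\succeq\tfrac12\lambda_S\mI_N$ for every $t\in[0,t^*)$.

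Next I would feed this into \eqref{eqn:boundS}. Writing $\vz:=\mA\ve$ so that $\RS(\vtheta)=\tfrac1{2N}\norm{\vz}_2^2$, the right-hand side of \eqref{eqn:boundS} is $-\tfrac{\width}{N^2}(\mA^\T\vz)^\T\mG^{(a)}(\vtheta(t))(\mA^\T\vz)$; applying first $\mG^{(a)}(\vtheta(t))\succeq\tfrac12\lambda_S\mI_N$ and then Assumption~\ref{assump..LambdaMin}(2), i.e.\ $\mA\mA^\T\succeq\lambda_{\mA}\mI_N$, would yield
\[
(\mA^\T\vz)^\T\mG^{(a)}(\vtheta(t))(\mA^\T\vz)\ \geq\ \tfrac12\lambda_S\norm{\mA^\T\vz}_2^2\ =\ \tfrac12\lambda_S\,\vz^\T\mA\mA^\T\vz\ \geq\ \tfrac12\lambda_S\lambda_{\mA}\norm{\vz}_2^2\ =\ \lambda_S\lambda_{\mA}N\,\RS(\vtheta(t)) .
\]
Thus $\tfrac{\D}{\D t}\RS(\vtheta(t))\leq -\tfrac{\width\lambda_S\lambda_{\mA}}{N}\RS(\vtheta(t))$ for all $t\in[0,t^*)$, and Gr\"onwall's inequality then delivers $\RS(\vtheta(t))\leq\exp\bigl(-\width\lambda_S\lambda_{\mA}t/N\bigr)\RS(\vtheta^0)$ on that interval, which is the statement.

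The one genuinely delicate point is that the lower bound $\mG^{(a)}(\vtheta(t))\succeq\tfrac12\lambda_S\mI_N$ is, at this stage, available only for $t<t^*$ --- which is precisely why the lemma is phrased on that interval. Upgrading it to all $t\geq0$, i.e.\ showing $t^*=\infty$, requires the trajectory-confinement estimate ($\vtheta(t)$ stays close to $\vtheta^0$ under over-parametrization, Lemma~\ref{prop:a_w}) and is carried out in the proof of Theorem~\ref{thm:lcr} in Appendix~B. Everything else is a routine chain of spectral inequalities; combined with Lemma~\ref{lem2} and the scaling $\gamma=\mathcal{O}\bigl(1/(\sqrt{\width}(\log\width)^2)\bigr)$, the prefactor $\RS(\vtheta^0)$ becomes $\mathcal{O}(1)$.
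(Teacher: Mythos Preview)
Your argument is correct and mirrors the paper's proof essentially step for step: invoke Lemma~\ref{lem:lambda_min} for the initial spectral gap, use the definition of $t^*$ and a Weyl-type perturbation bound to propagate $\lambda_{\min}\bigl(\mG^{(a)}(\vtheta(t))\bigr)\geq\tfrac12\lambda_S$ on $[0,t^*)$, feed this into \eqref{eqn:boundS} together with $\mA\mA^\T\succeq\lambda_{\mA}\mI_N$, and integrate the resulting differential inequality. The only cosmetic difference is your substitution $\vz=\mA\ve$, which the paper does not introduce explicitly.
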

\begin{proof}
    From Lemma~\ref{lem:lambda_min}, for any $\delta\in(0,1)$ with probability at least $1-\delta$ over initialization $\vtheta^0$ and for any $t\in[0,t^*)$ with $t^*$ defined in \eqref{eqn:ts}, we have $\vtheta(t)\in\mathcal{M}(\vtheta^0)$ defined right after \eqref{eqn:ts}
    and
    \begin{equation*}
        \begin{aligned}
            \lambda_{\min}\left(\mG^{(a)}(\vtheta)\right)
              \geq \lambda_{\min}\left(\mG^{(a)}(\vtheta^0)\right) - \norm{\mG^{(a)}(\vtheta) - \mG^{(a)}(\vtheta^0)} _{\mathrm{F}} 
             \geq \frac{3}{4}\lambda_S - \frac{1}{4}\lambda_S                        = \frac{1}{2}\lambda_S.
        \end{aligned}
    \end{equation*}
    Note that $\mG_{ij} = \frac{1}{\width}\nabla_{\vtheta}\phi(\vx_i;\vtheta)\cdot\nabla_{\vtheta}\phi(\vx_j;\vtheta)$ and
    $\nabla_{\vtheta}\RS = \frac{1}{N} \nabla_{\vtheta}\phi(S;\vtheta) \mA^{\T}\mA \bm{e}$
    , so
    \begin{equation*}
        \norm{\nabla_{\vtheta}\RS (\vtheta(t))}^2_2 = \frac{\width}{N^2}\ve^{\T} \mA^{\T}\mA \mG(\vtheta(t)) \mA^{\T}\mA \ve \geq \frac{\width}{N^2}\ve^{\T} \mA^{\T}\mA \mG^{(a)}(\vtheta(t)) \mA^{\T}\mA \ve,
        \end{equation*}
        where the last equation is true by the fact that $G^{(w)}(\vtheta(t))$ is a Gram matrix and hence positive semi-definite. Together with 
        \begin{eqnarray*}
         \frac{\width}{N^2}\ve^{\T} \mA^{\T}\mA \mG^{(a)}(\vtheta(t)) \mA^{\T}\mA \ve &\geq&  \frac{\width}{N^2}\lambda_{\min}\left(\mG^{(a)}(\vtheta(t))\right)\ve^{\T} \mA^{\T}\mA \mA^{\T}\mA \ve. \\
         &\geq & \frac{2\width}{N}\lambda_{\min}\left(\mG^{(a)}(\vtheta(t))\right) \lambda_{\min}\left(\mA \mA^{\T}\right)   \RS(\vtheta(t)) \\
         &\geq & \frac{\width}{N}\lambda_S \lambda_{\mA} \RS(\vtheta(t)),
    \end{eqnarray*}
    then finally we get
    \begin{equation*}
        \frac{\D}{\D t}\RS(\vtheta(t)) = - \norm{\nabla_{\vtheta}\RS(\vtheta(t))}^2_2 \leq - \frac{\width}{N}\lambda_S \lambda_{\mA}  \RS(\vtheta(t)).
    \end{equation*}
    Integrating the above equation yields the conclusion in this lemma.
\end{proof}

The following lemma shows that the parameters in the two-layer neural network are uniformly bounded in time during the training before time $t^*$ as defined in \eqref{eqn:ts}.

\begin{lem}\label{prop:a_w}
    For any $\delta\in(0,1)$, if \[\width\geq\max\left\{ \frac{32 N^4C_n}{\lambda_S^2\delta}, \frac{5\max\{n,r\} N\sqrt{2\lambda_{\mA,N}\RS(\vtheta^0)}}{\lambda_S \lambda_{\mA}} \left(2n\sqrt{2\log\frac{4\width(n+1)}{\delta}}\right)^{r-1} \right\},\] then with probability at least $1-\delta$ over the random initialization in \eqref{eqn:init}, for any $t\in[0, t^\ast)$ and any $k\in [\width]$,
    \begin{equation*}
        \begin{aligned}
             & \abs{a_k(t) - a_k(0)} \leq q,                    & \norm{\vw_k(t) - \vw_k(0)}_\infty \leq q,\\
             & \abs{a_k(0)} \leq \gamma\eta, \quad 
             & \norm{\vw_k(0)}_\infty\leq \eta,
        \end{aligned}
    \end{equation*}
    where
    \begin{equation*}
        q := \left(2 n\sqrt{2\log\frac{4\width(n+1)}{\delta}} \right)^r\frac{2\max\{n,r\} N\sqrt{2\lambda_{\mA,N}\RS(\vtheta^0)}}{n\width\lambda_S \lambda_{\mA}}  
    \end{equation*}
    and
    \[
    \eta:=\sqrt{2\log\frac{4\width(n+1)}{\delta}}.
    \]
\end{lem}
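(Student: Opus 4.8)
The plan is to combine the initialization estimates of Lemma~\ref{lem1}, the exponential decay of the empirical risk before the stopping time $t^*$ furnished by Lemma~\ref{lem:exp_RS}, and a continuity (bootstrap) argument in the gradient‑flow time $t$. First I would split the failure budget $\delta$ into two halves: applying Lemma~\ref{lem1} with $\delta/2$ yields, on an event $E_1$ of probability at least $1-\delta/2$, the last two assertions of the lemma, $|a_k(0)|\le\gamma\eta$ and $\|\vw_k(0)\|_\infty\le\eta$ for all $k\in[\width]$; and since $\width\ge 32N^4C_n/(\lambda_S^2\delta)$, Lemma~\ref{lem:lambda_min} and hence Lemma~\ref{lem:exp_RS} apply with $\delta/2$, giving an event $E_2$ of probability at least $1-\delta/2$ on which $\RS(\vtheta(s))\le \exp\!\big(-\tfrac{\width\lambda_S\lambda_{\mA}}{N}s\big)\RS(\vtheta^0)$ for all $s\in[0,t^*)$. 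Everything below is carried out on $E_1\cap E_2$, which has probability at least $1-\delta$.

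For the first two assertions I would introduce the auxiliary time
\[
\tilde t:=\inf\big\{t\ge0:\ \exists\,k\in[\width]\ \text{with}\ |a_k(t)-a_k(0)|>q\ \text{or}\ \|\vw_k(t)-\vw_k(0)\|_\infty>q\big\}
\]
and prove $\tilde t\ge t^*$; since the claim is vacuous for $t\ge t^*$, this is enough. Suppose, for contradiction, $\tilde t<t^*$. On $[0,\tilde t)$ the defining inequalities of $\tilde t$ together with the initialization bounds give $|a_k(s)|\le\gamma\eta+q$ and $\|\vw_k(s)\|_1\le n(\eta+q)$. The second entry of the $\max$ in the width hypothesis is exactly what forces $q\le\tfrac45\eta$, whence $|a_k(s)|\le\tfrac95\eta$ and $\|\vw_k(s)\|_1\le\tfrac95 n\eta$ throughout $[0,\tilde t)$.

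Next I would integrate the component‑wise gradient‑flow equations in \eqref{componentwisegradflow}. Using $\|\vx_i\|_\infty\le1$, $\sigma(y)=\max\{y,0\}^r$, $\sigma'(y)=r\max\{y,0\}^{r-1}$, Cauchy--Schwarz, $\|\cdot\|_1\le\sqrt N\,\|\cdot\|_2$ on $\sR^N$, $\|\mA^\T\mA\ve\|_2\le\sqrt{\lambda_{\mA,N}}\,\|\mA\ve\|_2$ (with $\lambda_{\mA,N}=\lambda_{\max}(\mA\mA^\T)$), and $\|\mA\ve(s)\|_2^2=2N\RS(\vtheta(s))$, one gets the pointwise‑in‑time bounds
\[
|\dot a_k|\le\sqrt{2\lambda_{\mA,N}\RS(\vtheta)}\,\|\vw_k\|_1^{\,r},\qquad
\|\dot{\vw}_k\|_\infty\le r\sqrt{2\lambda_{\mA,N}\RS(\vtheta)}\,|a_k|\,\|\vw_k\|_1^{\,r-1}.
\]
Inserting the a priori bounds $|a_k|\le\tfrac95\eta$, $\|\vw_k\|_1\le\tfrac95 n\eta$ valid on $[0,\tilde t)$ and the estimate $\int_0^t\sqrt{\RS(\vtheta(s))}\,\D s\le\sqrt{\RS(\vtheta^0)}\int_0^\infty e^{-\frac{\width\lambda_S\lambda_{\mA}}{2N}s}\,\D s=\frac{2N\sqrt{\RS(\vtheta^0)}}{\width\lambda_S\lambda_{\mA}}$, I obtain, for all $t\in[0,\tilde t)$ and all $k$,
\[
|a_k(t)-a_k(0)|<(2n\eta)^{r}\frac{2N\sqrt{2\lambda_{\mA,N}\RS(\vtheta^0)}}{\width\lambda_S\lambda_{\mA}}\le q,\qquad
\|\vw_k(t)-\vw_k(0)\|_\infty<\frac rn(2n\eta)^{r}\frac{2N\sqrt{2\lambda_{\mA,N}\RS(\vtheta^0)}}{\width\lambda_S\lambda_{\mA}}\le q,
\]
the last inequalities using $n,r\le\max\{n,r\}$. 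Letting $t\uparrow\tilde t$ and using the continuity of $t\mapsto(a_k(t),\vw_k(t))$ contradicts the definition of $\tilde t$; therefore $\tilde t\ge t^*$, which finishes the proof.

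The one genuinely delicate point is the self‑consistency of the bootstrap: the a priori bounds on $|a_k|$ and $\|\vw_k\|_1$ needed to control $|\dot a_k|$ and $\|\dot{\vw}_k\|_\infty$ must, after integration, come out strictly below $q$, and this forces $q$ to lie below the initialization scale $\eta$ — which is precisely what the second term in the $\max$ defining the width threshold guarantees (the first term being inherited from Lemma~\ref{lem:lambda_min}). A minor caveat is that for $r=1$ the flow \eqref{componentwisegradflow} must be read in the almost‑everywhere/Clarke sense, but the same estimates carry over with $\sigma'$ replaced by a bounded measurable selection.
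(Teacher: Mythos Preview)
Your proof is correct and follows essentially the same approach as the paper: split $\delta$ between Lemma~\ref{lem1} and Lemma~\ref{lem:exp_RS}, derive the pointwise bounds $|\dot a_k|\le\sqrt{2\lambda_{\mA,N}\RS}\,\|\vw_k\|_1^{r}$ and $\|\dot{\vw}_k\|_\infty\le r\sqrt{2\lambda_{\mA,N}\RS}\,|a_k|\,\|\vw_k\|_1^{r-1}$, integrate against the exponential decay of $\sqrt{\RS}$, and close with a continuity/bootstrap argument. The only cosmetic difference is the choice of auxiliary stopping time --- you bootstrap directly on the deviations exceeding $q$, whereas the paper bootstraps on $\xi(t):=\max_{k,s\le t}\{|a_k(s)|,\|\vw_k(s)\|_\infty\}$ exceeding $2\eta$ --- but since the width hypothesis forces $q\le\tfrac45\eta$ these are equivalent.
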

\begin{proof}
    Let $\xi(t) = \max\limits_{k\in[\width],s\in[0,t]}\{\abs{a_k(s)},\norm{\vw_k(s)}_\infty\}$. Note that
    \begin{eqnarray*}
        \abs{\nabla_{a_k}\RS{(\vtheta)}}^2
         & =& \left[\frac{1}{N}\sum_{i=1}^N (e^{\T}\mA^{\T}\mA)_i \sigma(\vw_k^\T\vx_i)\right]^2 \\
         &\leq & \norm{\vw_k}_1^{2r} \left[\frac{1}{N}\sum_{i=1}^N (|e^{\T}\mA^{\T}\mA|)_i \right]^2\\
         &\leq &2\norm{\vw_k}_1^{2r} \lambda_{\mA,N} \RS(\vtheta) \\
         &\leq &2 n^{2r} (\xi(t))^{2r} \lambda_{\mA,N} \RS(\vtheta),
    \end{eqnarray*}
    { where $\lambda_{\mA,N}$ denotes the largest eigenvalue of $\mA$,}
    and
    \begin{eqnarray*}
        \norm{\nabla_{\vw_k}\RS{(\vtheta)}}^2_\infty
       & =   &  \Big\lVert\frac{1}{N}\sum_{i=1}^N (e^{\T}\mA^{\T}\mA)_i a_k\sigma'(\vw_k^\T\vx_i)\vx_i\Big\rVert_\infty^2 \\
        & \leq   &  \abs{a_k}^2 r^2\norm{\vw_k}_1^{2(r-1)} \Big\lVert\frac{1}{N}\sum_{i=1}^N (e^{\T}\mA^{\T}\mA)_i \vx_i\Big\rVert_\infty^2 \\
        & \leq   &  \abs{a_k}^2 r^2\norm{\vw_k}_1^{2(r-1)} \left|\frac{1}{N}\sum_{i=1}^N (|e^{\T}\mA^{\T}\mA|)_i \right|^2 \\
       &\leq& 2 \abs{a_k}^2  r^2\norm{\vw_k}_1^{2(r-1)} \lambda_{\mA,N}\RS(\vtheta) \\
       &\leq & 2r^2n^{2(r-1)}(\xi(t))^{2r} \lambda_{\mA,N} \RS(\vtheta).
    \end{eqnarray*}
    From Lemma~\ref{lem:exp_RS}, if $\width\geq \frac{32 N^4C_n}{\lambda_S^2\delta}$, then with probability at least $1 - \delta/2$ over random initialization, {one can represent \eqref{componentwisegradflow}} in an integral form and obtain,
    \begin{equation*}
        \begin{aligned}
            \abs{a_k(t) - a_k(0)}
             & \leq \int_0^t\abs{\nabla_{a_k}\RS(\vtheta(s))}\diff{s}                                                                 \\
             & \leq \sqrt{2 \lambda_{\mA,N}}n^r (\xi(t))^r \int_{0}^{t} \sqrt{\RS(\vtheta(s))}\diff{s}                                        \\
             & \leq \sqrt{2 \lambda_{\mA,N}}n^r (\xi(t))^r\int_{0}^{t}\sqrt{\RS(\vtheta^0)}\exp\left(-\frac{\width\lambda_S \lambda_{\mA} s}{2N}\right)\diff{s} \\
             & \leq \frac{2\sqrt{2 \lambda_{\mA,N} }n^r N\sqrt{\RS(\vtheta^0)}}{\width\lambda_S \lambda_{\mA}} (\xi(t))^r                                         \\
             & \leq p  (\xi(t))^r,
        \end{aligned}
    \end{equation*}
    where $p := \frac{2\sqrt{2\lambda_{\mA,N}}\max\{n,r\}n^{r-1}  N\sqrt{\RS(\vtheta^0)}}{\width\lambda_S \lambda_{\mA}}$.  
    \begin{equation*}
        \begin{aligned}
            \norm{\vw_k(t) - \vw_k(0)}_\infty
             & \leq \int_{0}^{t} \norm{\nabla_{\vw_k}\RS(\vtheta(s))}_\infty\diff{s}                                                                      \\
             & \leq \sqrt{2\lambda_{\mA,N}} r n^{r-1}(\xi(t))^{r} \int_{0}^t \sqrt{\RS(\vtheta(s))} \diff{s}                                           \\
             & \leq \sqrt{2\lambda_{\mA,N}} r n^{r-1}(\xi(t))^{r} \int_{0}^{t} \sqrt{\RS(\vtheta^0)}\exp\left(-\frac{\width\lambda_S \lambda_{\mA} s}{2N}\right)\diff{s} \\
             & \leq \frac{2 \sqrt{2\lambda_{\mA,N}} N r n^{r-1}\sqrt{\RS(\vtheta^0)}}{\width\lambda_S \lambda_{\mA} } (\xi(t))^{r}                                            \\
             & \leq p (\xi(t))^{r}.
        \end{aligned}
    \end{equation*}
    We should point out that the above inequalities hold for all $t\in(0,t^*)$ since the upper bounds are based on Lemma~\ref{lem:exp_RS}, thus
    \begin{equation}\label{eqn:xib}
        \xi(t) \leq \xi(0) + p(\xi(t))^{r},
    \end{equation}
    for all $t\in (0,t^*)$.
    From Lemma~\ref{lem1} with probability at least $1 - \delta/2$,
    \begin{align}\label{eqn:xi0}
        \xi(0)=\max_{k\in[\width]}\{\abs{a_k(0)},\norm{\vw_k(0)}_\infty\}
        &\leq \max\left\{\gamma\sqrt{2\log\frac{4\width(n+1)}{\delta}},\sqrt{2\log\frac{4\width(n+1)}{\delta}}\right\}
        \leq \sqrt{2\log\frac{4\width(n+1)}{\delta}}=\eta.
    \end{align}
    Since
    \begin{equation*}
        \width
        \geq  \frac{5\max\{n,r\} N\sqrt{2\lambda_{\mA,N}\RS(\vtheta^0)}}{\lambda_S \lambda_{\mA}} \left(2 n\sqrt{2\log\frac{4\width(n+1)}{\delta}}\right)^{r-1}= \frac{5}{2}\width p (2\eta)^{r-1},
    \end{equation*}
    then $p(2\eta)^{r-1}\leq \frac{2}{5}$. 
    Let
    \begin{equation*}
        t_0 := \inf\{t \mid \xi(t) > 2\eta\}.
    \end{equation*}
    Then $t_0$ is the first time for the magnitude of a NN parameter exceeding $2\eta$. Recall that $t^*$, introduced in \eqref{eqn:ts}, denotes the first time for $\norm{\mG^{(a)}(\vtheta) - \mG^{(a)}(\vtheta^0)}_{\mathrm{F}}> \frac{1}{4}\lambda_S$. We will prove $t_0\geq t^*$, i.e., we show that, as long as the kernel $\mG^{(a)}(\vtheta(t))$ introduced by the gradient descent method is well controlled around its initialization $\mG^{(a)}(\vtheta(0))$, all network parameters have a well-controlled magnitude in the sense that there is no parameter with a magnitude larger than $2\eta$. We will prove $t_0\geq t^*$ by contradiction. Suppose that $t_0 < t^*$. For $t\in[0, t_0)$, by \eqref{eqn:xib}, \eqref{eqn:xi0}, and $\xi(t)\leq 2\eta$, we have 
    \begin{equation*}
        \xi(t) \leq \eta + p(2\eta)^{r-1}\xi(t)
        \leq \eta + \frac{2}{5}\xi(t),
    \end{equation*}
    then
    \begin{equation*}
        \xi(t) \leq \frac{5}{3}\eta.
    \end{equation*}
    After letting $t\to t_0$, the inequality just above contradicts with the definition of $t_0$. So $t_0 \geq  t^*$ and then $\xi(t) \leq 2\eta$ for all $t \in[0,t^*)$. Thus
    \begin{equation*}
        \begin{aligned}
            \abs{a_k(t) - a_k(0)}        & \leq (2 \eta)^r p            \\
            \norm{\vw_k(t) - \vw_k(0)}_\infty & \leq (2\eta)^r p.
        \end{aligned}
    \end{equation*}
    Finally, notice that
    \begin{equation}\label{eqn:q}
        \begin{aligned}
            (2\eta)^r p
              = \left(2 n\sqrt{2\log\frac{4\width(n+1)}{\delta}} \right)^r\frac{2\max\{n,r\} N\sqrt{2\lambda_{\mA,N}\RS(\vtheta^0)}}{n\width\lambda_S \lambda_{\mA}}     
              = q,
        \end{aligned}
    \end{equation}
    which ends the proof.
\end{proof}

\subsection{Proof of Theorem \ref{thm:lcr}}
Now we are ready to prove Theorem \ref{thm:lcr}.

\begin{proof}[Proof of Theorem \ref{thm:lcr}.]
    From Lemma~\ref{lem:exp_RS}, it is sufficient to prove that the stopping time $t^*$ in Lemma~\ref{lem:exp_RS} is equal to $+\infty$. We will prove this by contradiction.
    
    Suppose $t^* < +\infty$. Note that
    \begin{equation}\label{eqn:lcr1}
        \abs{\mG_{ij}^{(a)}(\vtheta(t^*)) - \mG_{ij}^{(a)}(\vtheta(0))} \leq \frac{1}{\width}\sum_{k=1}^\width \abs{g^{(a)}(\vw_k(t^*);\vx_i,\vx_j) - g^{(a)}(\vw_k(0);\vx_i,\vx_j)}.
    \end{equation}
    By the mean value theorem,
    \begin{eqnarray*}
        \abs{g^{(a)}(\vw_k(t^*);\vx_i,\vx_j) - g^{(a)}(\vw_k(0);\vx_i,\vx_j)} \leq  \norm{\nabla_{\vw} g^{(a)}\left(c\vw_k(t^*) + (1-c)\vw_k(0);\vx_i,\vx_j\right)}_\infty\norm{\vw_k(t^*) - \vw_k(0)}_1
    \end{eqnarray*}
    for some $c\in (0, 1)$. Further computation yields
    \begin{equation*}
        \begin{aligned}
            \nabla_{\vw} g^{(a)}(\vw;\vx_i,\vx_j) =
           \Big[\sigma'(\vw^\T\vx_i)\vx_i\Big]   \times \Big[\sigma(\vw^\T\vx_j)\Big]  + \Big[\sigma'(\vw^\T\vx_j)\vx_j\Big]\times \Big[\sigma(\vw^\T\vx_i)\Big]
        \end{aligned}
    \end{equation*}
    for all $\bm{w}$. Hence, it holds for all $\bm{w}$ that $\norm{\nabla_{\vw} g^{(a)}(\vw;\vx_i,\vx_j)}_\infty
              \leq 2r\norm{\vw}_1^{2r-1}$. Therefore, the bound in \eqref{eqn:lcr1} becomes
    \begin{equation}\label{eqn:lcr2}
        \abs{\mG_{ij}^{(a)}(\vtheta(t^*)) - \mG_{ij}^{(a)}(\vtheta(0))} \leq \frac{2r}{\width}\sum_{k=1}^\width \norm{c\vw_k(t^*) + (1-c)\vw_k(0)}_1^{2r-1} \norm{\vw_k(t^*) - \vw_k(0)}_1.
    \end{equation}
    By Lemma~\ref{prop:a_w},
    \begin{equation*}
        \norm{c\vw_k(t^*) + (1-c)\vw_k(0)}_1 \leq \norm{\vw_k(0)}_1 + \norm{\vw_k(t^*) - \vw_k(0)}_1 \leq n(\eta + q) \leq 2n\eta,
    \end{equation*}
    where $\eta$ and $q$ are defined in Lemma~\ref{prop:a_w}. So, \eqref{eqn:lcr2} and the above inequalities indicate
    \begin{equation*}
        \abs{\mG_{ij}^{(a)}(\vtheta(t^*)) - \mG_{ij}^{(a)}(\vtheta(0))} \leq r(2n)^{2r}\eta^{2r-1}q,
    \end{equation*}
    and
        \begin{eqnarray*}
            \norm{\mG^{(a)}(\vtheta(t^*)) - \mG^{(a)}(\vtheta(0))} _{\mathrm{F}}
             & \leq &nr(2n)^{2r}\eta^{2r-1}q    \\
             & = &nr(2n)^{2r}(2\log\frac{4\width(n+1)}{\delta})^{(3r-1)/2} \left(2 {n
             }\right)^r\frac{2\max\{n,r\} N\sqrt{2\lambda_{\mA,N}\RS(\vtheta^0)}}{n\width\lambda_S \lambda_{\mA}}    \\
             &=&2^{9r/2+1} n^{3r-1} N^2 r\left(\log\frac{4\width(n+1)}{\delta}\right)^{(3r-1)/2} \frac{\max\{n,r\} \sqrt{\lambda_{\mA,N}\RS(\vtheta^0)}}{\width\lambda_S \lambda_{\mA}} \\
             & \leq &\frac{1}{4}\lambda_S,
        \end{eqnarray*}
    if we choose
    \begin{equation*}
        \width\geq 2^{9r/2+3} n^{3r-1} N^2 r\left(\log\frac{4\width(n+1)}{\delta}\right)^{(3r-1)/2} \frac{\max\{n,r\} \sqrt{\lambda_{\mA,N}\RS(\vtheta^0)}}{\lambda_S^2 \lambda_{\mA}}.
    \end{equation*}
    The fact that $\norm{\mG^{(a)}(\vtheta(t^*)) - \mG^{(a)}(\vtheta(0))} _{\mathrm{F}}\leq \frac{1}{4}\lambda_S$ above contradicts with the definition of $t^*$ in \eqref{eqn:ts}.

    Let us summarize the conclusion in the above discussion. Let $C_n := \Exp\norm{\vw}_1^{4r}<+\infty$ with $\vw\sim \mathcal{N}(\vzero,\mI_n)$. The largest eigenvalue and the condition number of $\mA \mA^{\T}$ are denoted as $\lambda_{\mA,N}$ and $\kappa_A$, respectively. For any $\delta\in(0,1)$ , define 
\[
m_1= \frac{32 N^4C_n}{\lambda_S^2\delta},
\]
\[
m_2=  \frac{5\max\{n,r\} N\sqrt{2\lambda_{\mA,N}\RS(\vtheta^0)}}{\lambda_S \lambda_{\mA}} \left(2n\sqrt{2\log\frac{4\width(n+1)}{\delta}}\right)^{r-1},
\]
and
\begin{equation}\label{eqn:mmm}
m_3=2^{9r/2+3} n^{3r-1} N^2 r\left(\log\frac{4\width(n+1)}{\delta}\right)^{(3r-1)/2} \frac{\max\{n,r\} \sqrt{\lambda_{\mA,N}\RS(\vtheta^0)}}{\lambda_S^2 \lambda_{\mA}}.
\end{equation}
Then when $m\geq\max\{m_1,m_2,m_3\}$, with probability at least $1-\delta$ over the random initialization $\vtheta^0$, we have, for all $t\geq 0$,
    \begin{equation*}
        \RS(\vtheta(t))\leq\exp\left(-\frac{\width\lambda_S \lambda_{\mA}t}{N}\right)\RS(\vtheta^0).
    \end{equation*}
Note that $\mathcal{O}(\kappa_A\text{poly}(N,r,n,\frac{1}{\delta},\frac{1}{\lambda_S}))\geq\max\{m_1,m_2,m_3\}$. Hence, we have completed the proof.
\end{proof}

\section{More details on the numerical experiments}\label{appendix_d}

In this Appendix, we report the detailed hyper-parameter setting for the six examples presented in the main text. 
For convenience, we also report the notations in Table~\ref{tab:notation}. 



\begin{table}[H]
  \centering
    \begin{adjustbox}{width=0.8\textwidth,center}
    \begin{tabular}{ll}
    \toprule
    \textbf{Notation} & \textbf{Explanation} \\
    \midrule
    $k$     & $k$-nearest neighbors in DM \\
    $k_2$     & variable bandwidth parameter in VBDM \\
    $\epsilon$ & the bandwidth parameter for local integral in DM \\
    $m$     & the width of hidden layer in FNN \\
    $T$     & the number of iterations for training a FNN\\
    $N$     & the number of training points \\
    $N_b$    & the number of training points on the boundary \\
    $\gamma$ & the regularization coefficient for $\frac{1}{2}\|\bm{\phi}_{\bm{\theta}}\|_2^2$ introduced in Section~\ref{NNsolver}\\
    $\lambda$ & the penalty coefficient to enforce the boundary condition in the loss function~\eqref{eqn:DMNN2}\\
    \bottomrule
    \end{tabular}%
  \end{adjustbox}
  \caption{The summary of hyperparameter notations in the algorithms.}
  \label{tab:notation}%
\end{table}%

\begin{table}[H]
  \centering
    \begin{tabular}{clccccc}
    \toprule
          & $N$ & 625	& 1,225	& 2,500	& 5,041	& 10,000 \\
    \midrule
    \multirow{2}[2]{*}{VBDM} & $\epsilon$ & 0.0146  & 0.0078  &  0.0068  &  0.0036 & 0.0026 \\
          & $k$ & 100   & 100   & 200   & 200 & 300 \\
          & $k_2$ & 30   & 30   & 60   & 600 & 90 \\
    \midrule
    \multirow{2}[2]{*}{NN} & $T$ & 2k   & 3k   & 4k   & 6k & 8k\\
          & $m$ & 50   & 71   & 100   & 141 & 200 \\
          & $\gamma$ & 0.001   & 0.001   & 0.001   & 0.001 & 0.001 \\ 
    \bottomrule
    \end{tabular}%
    \caption{The hyperparameter setting for {\bf Example 1}: 2D Sphere.}
  \label{tab:2dsphereconfigs}%
\end{table}%

\begin{table}[H]
  \centering
  \begin{adjustbox}{width=0.75\textwidth,center}
    \begin{tabular}{clcccccccc}
    \toprule
          & $N$ & 625   & 1225  & 2500  & 5041  & 10000 & 19881 & 40000 & 80089 \\
    \midrule
    \multirow{2}[2]{*}{DM} & $\epsilon$ & 0.1166 & 0.0508 & 0.0237 & 0.0118 & 0.0059 & 0.0029 & 0.0014 & 0.0008 \\
          & $k$ & 128   & 128   & 128   & 256   & 256   & 256   & 512   & 768 \\
    \midrule
    \multicolumn{1}{c}{\multirow{3}[2]{*}{NN}} & $T$ & 2000 & 3000 & 4000 & 4000 & 4000 & 4000 & 4000 & 8000 \\
          & $m$ & 50    & 71    & 100   & 141   & 200   & 282   & 400   & 583 \\
          & $\gamma$ & 0.001 & 0.001 & 0.001 & 0.001 & 0.001 & 0.002 & 0.005 & 0.01 \\
    \bottomrule
    \end{tabular}%
	\end{adjustbox}
  \caption{The hyperparameter setting for {\bf Example 2}: 2D torus embedded in $\mathbb{R}^3$.}
  \label{tab:2dconfigs}%
\end{table}%

\comment{
\begin{table}[htbp]
  \centering
  \begin{adjustbox}{width=\textwidth,center}
    \begin{tabular}{cp{9.165em}cccccccc}
    \toprule
          & \multicolumn{1}{l}{$N$} & 625   & 1225  & 2500  & 5041  & 10000 & 19881 & 40000 & 80089 \\
    \midrule
    \multirow{2}[2]{*}{DM} & \multicolumn{1}{l}{forward error} & 3.7837  & 1.8985  & 1.0231  & 0.4871  & 0.2519  & 0.1251  & 0.0630  & 0.0326  \\
          & \multicolumn{1}{l}{inverse error} & 0.7606  & 0.3175  & 0.1511  & 0.0680  & 0.0336  & 0.0159  & N/A   & N/A \\
    \midrule
    \multirow{2}[2]{*}{NN} & \multicolumn{1}{l}{training error} & 0.7563    & 0.3183   & 0.1508   & 0.0680   & 0.0336   & 0.0158   & 0.0082   & 0.0036   \\
          & testing error & 0.7764  & 0.3213   & 0.1516   & 0.0682   & 0.0336   & 0.0159   & 0.0082   & 0.0036  \\
    \bottomrule
    \end{tabular}%
	\end{adjustbox}
  \caption{The errors for DM and NN 
  corresponding to {\bf Example~1}: 2D torus embedded in $\mathbb{R}^3$. N/A indicates that the result is not computable. }
  \label{tab:2derror}%
\end{table}%
}

\begin{table}[H]
  \centering
  \begin{adjustbox}{width=0.5\textwidth,center}
    \begin{tabular}{clccccc}
    \toprule
          & $N$ & 1024 &	2025	&4096	&16384	&32400 \\
    \midrule
    \multirow{2}[2]{*}{DM} & $\epsilon$ & 0.0221 &	0.0096 &	0.0048 &	0.0013 &	0.00064  \\
          & $k$ & 128&	128	&128	&256	&256 \\
    \midrule
    \multirow{3}[2]{*}{NN} & $T$ & 2000   & 3000   & 4000   & 10000   & 12000 \\
          & $m$ & 100&	100&	150&	250&	250 \\
          & $\lambda$ & 5.0   & 5.0   & 5.0   & 5.0   & 5.0  \\
    \bottomrule
    \end{tabular}%
	\end{adjustbox}
  \caption{The hyperparameter setting for {\bf Example 3}: 2D semi-torus with the Dirichlet condition.}
  \label{tab:2dconfigsbc}%
\end{table}%

\begin{table}[H]
  \centering
  \begin{adjustbox}{width=0.48\textwidth,center}
   \begin{tabular}{clccccc}
    \toprule
          & $N$ & 512   & 1331  & 4096  & 12167 & 24389 \\
    \midrule
    \multirow{2}[2]{*}{DM} & $\epsilon$ & 0.43  & 0.23  & 0.12  & 0.073 & 0.051 \\
          & $k$ & 128   & 128   & 256   & 256   & 256 \\
    \midrule
    \multirow{3}[2]{*}{NN} & $T$ & 1000   & 2000   & 2000   & 3000   & 3000 \\
          & $m$ & 100   & 150   & 250   & 400   & 500 \\
          & $\gamma$ & 0.001 & 0.001 & 0.005  & 0.005 & 0.005 \\
    \bottomrule
    \end{tabular}%
	\end{adjustbox}
  \caption{The hyperparameter setting for {\bf Example 4}: 3D manifold embedded in $\mathbb{R}^{12}$. }
  \label{tab:3dconfigs}%
\end{table}%

\comment{
\begin{table}[htbp]
  \centering
  \begin{adjustbox}{width=0.7\textwidth,center}
    \begin{tabular}{clccccc}
    \toprule
          & $N$ & 512   & 1331  & 4096  & 12167 & 24389 \\
    \midrule
    \multirow{2}[2]{*}{DM} & forward error & 0.4241 & 0.1498 & 0.0403 & 0.0109 & 0.0039 \\
          & inverse error & 0.2614 & 0.1113 & 0.0347 & 0.0092 & 0.0033 \\
    \midrule
    \multirow{2}[2]{*}{NN} & training error & 0.2665  & 0.1148  & 0.0297 & 0.0066  & 0.0023  \\
          & testing error & 0.2715 & 0.1346  & 0.0302  & 0.0069  & 0.0024  \\
    \bottomrule
    \end{tabular}%
	\end{adjustbox}
  \caption{The errors for DM and NN corresponding to {\bf Example~2}: 3D manifold embedded in $\mathbb{R}^{12}$. }
  \label{tab:3derror}%
\end{table}%
}

\comment{
\begin{table}
  \centering
  \begin{adjustbox}{width=0.7\textwidth,center}
    \begin{tabular}{clccccc}
    \toprule
          & $N$ & 1024  & 2025  & 4096  & 16384 & 32400 \\
    \midrule
    \multirow{2}[2]{*}{DM} & forward error & 6.7976 & 6.8733 & 7.0843 & 8.1603 & 8.9551 \\
          & inverse error & 0.0627 & 0.0286 & 0.0149 & 0.0047 & 0.0026 \\
    \midrule
    \multirow{2}[2]{*}{NN} & training error & 0.0613 &	0.0285 &	0.0150 &	0.0048 	& 0.0029  \\
          & testing error & 0.0634 &	0.0293 &	0.0154 &	0.0048 &	0.0029  \\
    \bottomrule
    \end{tabular}%
	\end{adjustbox}
  \caption{The errors for DM and NN corresponding to {\bf Example~3}: 2D semi-torus with Dirichlet condition.}
  \label{tab:2derrorbc}%
\end{table}%
}
\begin{table}[H]
  \centering
    \begin{tabular}{clcccc}
    \toprule
          & $N$ & 4326  & 8650  & 17299 & 34594 \\
    \midrule
    \multirow{2}[2]{*}{VBDM} & $\epsilon$ & 0.0048  & 0.0024  & 0.0012  & 0.0006  \\
          & $k$ & 100   & 100   & 100   & 100 \\
          & $k_2$ & 30   & 30   & 30   & 30 \\
    \midrule
    \multirow{3}[2]{*}{NN} & $T$ & 30k   & 40k   & 50k   & 100k \\
          & $m$ & 100   & 150   & 150   & 150 \\
    \bottomrule
    \end{tabular}%
    \caption{The hyperparameter setting for {\bf Example 5}: Bunny.}
  \label{tab:bunnyparams}%
\end{table}%

\begin{table}[H]
  \centering
    \begin{tabular}{clcccc}
    \toprule
          & $N$ & 2185  & 4340  & 8261  & 17157 \\
    \midrule
    \multirow{2}[2]{*}{VBDM} & $\epsilon$ & 0.000793 & 0.000370 & 0.000185 & 0.000081 \\
          & $k$ & 60    & 60    & 60    & 60 \\
           & $k_2$ & 20    & 20    & 20    & 20 \\
    \midrule
    \multirow{4}[2]{*}{NN} & $T$ & 100k  & 100k  & 100k  & 200k \\
          & $m$ & 100   & 100   & 200   & 500 \\
          & Activation & ReLU  & ReLU  & ReLU  & Tanh \\
          & Layer & 4     & 4     & 6     & 6 \\
          & $\lambda$ & 1     & 1     & 10     & 10000 \\
    \bottomrule
    \end{tabular}%
     \caption{The hyperparameter setting for {\bf Example 6}: Face with the Dirichlet condition.}
  \label{tab:faceparams}%
\end{table}%

\bibliographystyle{plain}  
\bibliography{ref,refhz}

\end{document}